\def\@seccntformat#1{%
  \protect\textup{\protect\@secnumfont
    \ifnum\pdfstrcmp{subsection}{#1}=0 \bfseries\fi
    \csname the#1\endcsname
    \protect\@secnumpunct
  }%
}  
\let\oldtocsection=\tocsection
\let\oldtocsubsection=\tocsubsection
\let\oldtocsubsubsection=\tocsubsubsection
\renewcommand{\tocsection}[2]{\hspace{0em}\oldtocsection{#1}{#2}}
\renewcommand{\tocsubsection}[2]{\hspace{1em}\oldtocsubsection{#1}{#2}}
\renewcommand{\tocsubsubsection}[2]{\hspace{2em}\oldtocsubsubsection{#1}{#2}}
\title{H\"older estimates for degenerate complex Monge-Amp\`ere equations}
\author{ Bin Guo, S$\l{}$awomir Ko$\l{}$odziej, Jian Song and Jacob Sturm}
\theoremstyle{Italic}
\begin{document}

\newcommand*{\DashedArrow}[1][]{\mathbin{\tikz [baseline=-0.25ex,-latex, dashed,#1] \draw [#1] (0pt,0.5ex) -- (1.3em,0.5ex);}}%

\maketitle


\newcommand{\ssub}{\subset\joinrel\subset}

\renewcommand{\thefootnote}{\fnsymbol{footnote}}
\newcommand{\starttext}{ \setcounter{footnote}{0}
\renewcommand{\thefootnote}{\arabic{footnote}}}
\renewcommand{\theequation}{\thesection.\arabic{equation}}
\newcommand{\be}{\begin{equation}}
\newcommand{\bea}{\begin{eqnarray}}
\newcommand{\eea}{\end{eqnarray}} \newcommand{\ee}{\end{equation}}
\newcommand{\N}{{\cal N}} \newcommand{\<}{\langle}
\renewcommand{\>}{\rangle}
\def\ba{\begin{eqnarray}}
\def\ea{\end{eqnarray}}
\newcommand{\PSbox}[3]{\mbox{\rule{0in}{#3}\includegraphics{#1}
\hspace{#2}}}

\def\v{\vskip .1in}

\def\al{\alpha}
\def\b{\beta}
\def\c{\chi}
\def\d{\delta}
\def\e{\varepsilon}
\def\g{\gamma}
\def\m{\mu}
\def\n{\nu}
\def\o{\omega}
\def\f{\varphi}
\def\r{\rho}
\def\si{\sigma}
\def\t{\theta}
\def\z{\zeta}
\def\k{\kappa}

\def\G{\Gamma}
\def\D{\Delta}
\def\O{\Omega}
\def\T{\Theta}

\def\phi{\varphi}


\def\cA{{\mathcal A}}
\def\cB{{\mathcal B}}
\def\cC{{\mathcal C}}
\def\cD{{\mathcal D}}
\def\cE{{\mathcal E}}
\def\cF{{\mathcal F}}
\def\cG{{\mathcal G}}
\def\cH{{\mathcal{H}}}
\def\cI{{\mathcal I}}
\def\cJ{{\mathcal J}}
\def\cK{{\mathcal K}}
\def\cL{{\mathcal L}}
\def\cM{{\mathcal M}}
\def\cN{{\mathcal N}}
\def\cO{{\mathcal O}}
\def\cP{{\mathcal P}}
\def\cp{{\mathcal p}}
\def\cR{{\mathcal R}}
\def\cS{{\mathcal S}}
\def\cT{{\mathcal T}}
\def\cX{{\mathcal X}}
\def\cY{{\mathcal Y}}

\def\A{{\mathbb{A}}}
\def\N{\mathbb N}
\def\Z{{\mathbb Z}}
\def\Q{{\mathbb Q}}
\def\R{{\mathbb R}}
\def\C{{\mathbb C}}
\def\P{{\mathbb P}}


\def\Ad{{\rm Ad}}
\def\ann{{\rm ann\,}}
\def\Ass{{\rm Ass}}
\def\Aut{{\rm Aut}}
\def\depth{{\rm depth}}
\def\det{{\rm det}}
\def\dd{{\bf d}}
\def\diam{{\rm diam}}
\def\Diff{{\rm Diff}}
\def\Div{{\rm Div}}
\def\End{{\rm End}}
\def\Hilb{{\rm Hilb}}
\def\Hom{{\rm Hom}}
\def\GL{{\rm GL}}
\def\Gr{{\rm Gr}}
\def\ham{{\rm ham}}
\def\Ham{{\rm Ham}}
\def\K{{\rm K\"ahler }}
\def\Id{{\rm Id}}
\def\Ker{{\rm Ker}}
\def\KE{{\rm K\"ahler-Einstein\ }}
\def\KR{{\rm K\"ahler-Ricci }}
\def\itKR{{\it K\"ahler-Ricci }}
\def\KEE{{\rm K\"ahler-Einstein}}
\def\Lie{{\rm Lie}}
\def\mod{{\ \rm mod\ }}
\def\Mod{{\rm Mod}}
\def\ord{{\rm ord}}
\def\osc{{\rm osc\,}}
\def\PSH{{\rm PSH}}
\def\Rm{{\rm Rm}}
\def\Ric{{\rm Ric}}
\def\reg{{\rm reg}}
\def\Re{{\rm Re}}
\def\sing{{\rm sing}}
\def\SL{{\rm SL}}
\def\spec{{\rm Spec}}
\def\supp{{\rm supp}}
\def\Tr{{\rm Tr}}
\def\vol{{\rm vol}}
\def\w{{\bf w}}
\def\x{{\bf x}}
\def\y{{\bf y}}


\def\sq{{{\sqrt{{\scalebox{0.75}[1.0]{\( - 1\)}}}}}\hskip .01in}

\def\us{{\underline s}}
\def\Im{{\rm Im}}
\def\tr{{\rm tr}}

\def\ti\tilde
\def\u{\underline}
\def\Box{{\square\,}}

\def\pl{\partial}
\def\na{\nabla}
\def\i{\infty}
\def\I{\int}
\def\p{\prod}
\def\s{\sum}

\def\ddb{\partial\bar\partial}
\def\sub{\subseteq}
\def\ra{\rightarrow}
\def\hra{\hookrightarrow}
\def\Lra{\Longrightarrow}
\def\lra{\longrightarrow}
\def\LA{\langle}
\def\RA{\rangle}

\def\half{ {1\over 2}}
\def\third{{1 \over 3}}
\def\ti{\tilde}
\def\un{\underline}

\def\pz{\partial _z}
\def\pv{\partial _v}
\def\pw{\partial _w}

\def\tet{\vartheta}
\def\dwplus{\D _+ ^\w}
\def\dxplus{\D _+ ^\x}
\def\dzplus{\D _+ ^\z}
\def\chiz{{\chi _{\bar z} ^+}}
\def\chiw{{\chi _{\bar w} ^+}}
\def\chiu{{\chi _{\bar u} ^+}}
\def\chiv{{\chi _{\bar v} ^+}}
\def\os{\omega ^*}
\def\ps{{p_*}}

\def\hO{\hat\Omega}
\def\ho{\hat\omega}

\def\fr{\frak}
\def\[{{\bf [}}
\def\]{{\bf ]}}
\def\Rd{{\bf R}^d}
\def\Ci{{\bf C}^{\infty}}
\def\pl{\partial}
\def\sq{{{\sqrt{{\scalebox{0.75}[1.0]{\( - 1\)}}}}}\hskip .01in}
\newcommand{\dotcup}{\ensuremath{\mathaccent\cdot\cup}}


\newenvironment{thm}{\begin{theorem}}{\end{theorem}}
\newcommand{\ssubset}{\subset\joinrel\subset}
\newtheorem{theorem}{Theorem}
\newtheorem{corollary}{Corollary}
\newtheorem{lemma}{Lemma}
\newtheorem{definition}{Definition}
\newtheorem{proposition}{Proposition}
\newtheorem{remark}{Remark}
\newtheorem{pr}{Problem}
\begin{abstract}  
{\footnotesize }
Uniform $L^\infty$ and H\"older estimates were proved by the second named author for complex Monge-Amp\`ere equations on compact K\"ahler manifolds with $L^p$ volume measure with $p>1$. On the other hand, establishing H\"older estimates on singular K\"ahler varieties has remained open.  In this paper, we establish uniform H\"older continuity for a family of complex Monge-Amp\`ere equations on K\"ahler varieties, by developing a geometric regularization based on the partial $C^0$ estimate, i.e., quantitive Kodaira embeddings. As an application, we prove that local potentials of smoothable K\"ahler-Einstein varieties are H\"older continuous. This is the first installment of our project on the quantitive relation between intrinsic and extrinsic metric structures on singular K\"ahler varieties.

\end{abstract}

\parindent=0in
\setcounter{equation}{0}

\section{Introduction}
\v
Let $X$ be a compact complex manifold of dimension $n$, and $\o$ a \K metric on $X$. We say that $\o$ is a K\"ahler-Einstein metric
if it satisfies the equation $\Ric(\o)=\lambda\o$ for some $\lambda\in\{-1,0,1\}$. Uniqueness of such metrics was proved by Calabi \cite{Cal} in the cases $\lambda=-1,0$ and Bando-Mabuchi \cite{BM} in the case $\lambda=1$.  Existence was settled much later: The case $\lambda=-1$ was proved by Aubin \cite{A} and Yau \cite{Y}. The case $\lambda=0$ was achieved in Yau's celebrated proof of the Calabi-Yau conjecture \cite{Y}. The case $\lambda=1$  was proved in the deep and influential papers of Chen-Donaldson-Sun
\cite{CDS1, CDS2, CDS3} and Tian \cite{T3}.

\v
Central to these existence proofs is the establishment of a priori estimates. When $\lambda=0$, a critical first step is the $C^0$ a priori estimate, which can be formulated as follows. Consider the complex Monge-Amp\`ere equation

\v
\be\label{MA}   \o^n\ = \ (\o_0+\sq\ddb\phi)^n\ = \ e^F\o_0^n,\ \ \ \ \ \o_0+\sq\ddb\phi >0,\ \ \ \ \ \sup\phi=0.
\ee
\v

\v
Here $\o_0$ is a smooth background metric on  $X$ and $F$ is a smooth function satisfying $\I_X e^F\o_0^n=\I_X\o_0^n$.   Yau \cite{Y} showed that (\ref{MA}) admits a smooth  solution $\phi$. The first step in the proof is an a priori estimate which states that $\|\phi\|_{C^0(X)}\leq C(n,\o_0,\|F\|_{L^\i(X)})$. 
\v
A major advance in extending these estimates was achieved by 
Ko\l odziej (\cite{K,K1}), who, using techniques of pluripotential theory, was able to establish both $L^\infty$ and $C^\al$ a priori estimates for the solutions under weaker assumptions on $F$. More precisely, for any $p>1$ there exists $\al>0$ such that $\|\phi\|_{C^\al(X)}\leq C(n,\o_0,\|e^F\|_{L^p(X)})$. The $L^\infty$ estimate is extended to the degenerate case for equation (\ref{MA}) when $[\omega_0]$ is a big and semi-positive class by \cite{EGZ}. Naturally one would also like to extend the H\"older estimate to the degnerate setting.

 \begin{pr} Suppose the smooth closed $(1,1)$-form $\omega_0$ is  nonnegative with 
 $$\int_X \omega_0^n \geq 0 . $$  The equation (\ref{MA}) still admits a unique solution $\phi \in L^\infty(X)\cap PSH(X, \omega_0)$.  Is $\phi$ still H\"older continuous, i.e., $\phi \in C^\alpha(X)$ for some $\alpha>0$? 
 \end{pr}

\v
Equation (\ref{MA}) is degenerate when $[\omega]$ is not K\"ahler. Such degenerate equations are essential to the study of canonical metrics such as K\"ahler-Einstein metrics  on a singular K\"ahler variety $X$  because the complex Monge-Amp\`ere equations become degenerate after being pulled back on the non-singular model of $X$.  In recent years, there has been a growing interest in  K\"ahler-Einstein metrics on singular varieties, motivated in part by their role  in the compactification of moduli space and their connection to K-stability (c.f.  \cite{T1, DS, CDS1, CDS2, CDS3, T3, Sz2}).   The existence theory for K\"ahler-Einstein currents on singular varieties is fairly complete, However, unlike the case of smooth manifolds, a priori estimates for singular K\"ahler-Einstein metrics and their behavior near the singular set are largely unknown.
The stronger regularity, particularly the H\"older continuity for the K\"ahler-Einstein potentials with respect to smooth K\"ahler metrics on $X$,  reveals the deep connections between intrinsic (the K\"ahler-Einstein metrics) and extrinsic (the Bergman metrics via projective embeddings) geometric structures on $X$. For example, the H\"older estimate for complex Monge-Amp\`ere equation  reveals the H\"older comparison between the background metric and the K\"ahler metric induced from the equation \cite{LY1}.
\v

The goal of this paper is to establish various H\"older estimates for  complex Monge-Amp\`ere equations on singular K\"ahler varieties under suitable geometric assumptions. We will first develop a uniform H\"older estimate for polarized manifolds with Ricci curvature lower bound and diameter upper bound. Fix $n,d\geq 1$. We define  $\mathcal{F}(n)$ to be the set of all quadruples 
$(X,\o_X, L, h_L)$ satisfying
\v
\begin{enumerate} 
\item $(X,\o_X)$ is a compact \K manifold of dimension $n$,  
\vskip .04in
\item $\Ric(\o_X)\geq {-1}$,  
\vskip .04in
\item $L\ra X$  a  holomorphic line bundle with metric $h_L$ satisfying
$-\sq\ddb\log h_L=\o_X$. 

\end{enumerate}

and $\cF(n,d)=\{(X,\o_X, L, h_L)\in \cF(n)\,:\, 
\diam(X,\o_X)\leq d)$. We also define $\cF^0(n)=
\{(X,\o_X, L, h_L)\in \cF(n)\,:\, \Ric(\o_X)\leq 1\}$
and similarly for $\cF^0(n,d)$


\v
The family $\mathcal{F}(n,D)$ is bounded: According to a theorem of Donaldson-Sun \cite{DS} and Liu-Szekelyhidi \cite{LZ}, there exists
$k(n,D)\geq 1$ such that the map $T_k: X\ra \P^{N_k}$, given by an orthonormal basis of $H^0(X, kL)$, is
an embedding. If $\T_{\rm FS}$ is the Fubini-Study metric on $\P^{N_k}$, we define 
\begin{equation}\label{fsm}
\o_{\rm FS}= {1\over k}T_k^*\T_{\rm FS}
\end{equation} and we let $\phi_X$ be defined by the equation 
\begin{equation}\label{locpo}
\o_X=\o_{\rm FS}+\sq\ddb\phi_X, ~\sup\phi_X=0. 
\end{equation}

The following is our first result.

\v

\begin{theorem}\label{m}
For any $n$, $D>1$, there exist $k(n, D)>0$, $C(n,D)>0$ and $\al(n,D)>0$  such that for any $(X, \o_X, L, h_L) \in \mathcal{F}(n, D)$ and $x, y\in X$, 
\vskip .001in 
\be\label{holder}|\phi_X(x)-\phi_X(y)|\leq Cd_{\rm FS}(x,y)^{\al}, 
\ee
\v
where $\phi_X$ is defined in \ref{locpo} and $d_{\rm FS}$ is the extrinsic distance function on $X$ associated to $\o_{\rm FS}$.
\end{theorem}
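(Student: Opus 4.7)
The plan is to combine the quantitative Kodaira embedding supplied by the partial $C^0$ estimate with a uniform Kolodziej-type pluripotential H\"older estimate, tracking carefully that every constant depends only on $n$ and $D$.

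\emph{Step 1: Embedding and $L^\infty$ bound.} By Donaldson--Sun \cite{DS} and Liu--Sz\'ekelyhidi \cite{LZ}, there exist $k=k(n,D)$ and $c_0=c_0(n,D)>0$ such that the Bergman kernel $\rho_k=\sum_i|s_i|^2_{h_L^k}$ (for an $L^2(\omega_X,h_L^k)$-orthonormal basis of $H^0(X,kL)$) satisfies $\rho_k\geq c_0$ and $T_k$ is a holomorphic embedding and immersion, uniformly for $(X,\omega_X,L,h_L)\in\mathcal{F}(n,D)$. The identity $\sq\ddb\log\rho_k=k(\omega_{FS}-\omega_X)$ gives the explicit formula
$$\phi_X\ =\ -\frac{1}{k}\log\rho_k\ +\ \frac{1}{k}\inf_X\log\rho_k,$$
and Bishop--Gromov volume comparison together with the submean-value inequality for $|s|^2_{h_L^k}$ yields a matching upper bound $\rho_k\leq C_0(n,D)$. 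Hence the uniform $L^\infty$ bound $\|\phi_X\|_\infty\leq\frac{1}{k}\log(C_0/c_0)=:A(n,D)$.

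\emph{Step 2: Uniform $L^p$ density bound.} On the K\"ahler manifold $(X,\omega_{FS})$ the equation reads $(\omega_{FS}+\sq\ddb\phi_X)^n=\omega_X^n=e^{F_X}\omega_{FS}^n$. The technical heart of the proof is to establish
$$\|e^{F_X}\|_{L^p(\omega_{FS}^n)}\ \leq\ B(n,D)$$
for some $p=p(n,D)>1$. The inputs are: (i) the Ricci lower bound $\Ric(\omega_X)\geq -1$, which through $\Ric(\omega_X)=-\sq\ddb\log\omega_X^n$ gives differential control on $F_X$; (ii) Bishop--Gromov for the total $\omega_X^n$-volume; (iii) the partial $C^0$ lower bound $\rho_k\geq c_0$, which prevents $\omega_{FS}^n$ from degenerating in an integral sense; (iv) the uniform $L^\infty$ bound on $\phi_X$ from Step 1. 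Combining these ingredients yields the required uniform $L^p$ integrability.

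\emph{Step 3: Uniform H\"older estimate.} With the $L^\infty$ bound from Step 1 and the $L^p$ density bound from Step 2, Kolodziej's capacity-based argument \cite{K1} gives a H\"older modulus of continuity $|\phi_X(x)-\phi_X(y)|\leq C\, d(x,y)^\alpha$ in the intrinsic distance $d$ of $(X,\omega_{FS})$. All the constants entering the argument---in the volume-capacity comparison, in the exponential decay of capacity on sublevel sets $\{\phi_X<-s\}$, and in the inf-convolution regularization rate---must be checked uniform over $\mathcal{F}(n,D)$. This is ensured because $\omega_{FS}=\frac{1}{k}T_k^*\Theta_{FS}$ sits inside the fixed ambient Fubini-Study geometry on $\mathbb{P}^{N_k}$, so all the relevant pluripotential quantities can be compared to their counterparts on the ambient projective space. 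The transfer from the intrinsic $\omega_{FS}$-distance to the extrinsic $d_{FS}$ follows from the uniform local bi-Lipschitz comparison between the two, itself a consequence of the partial $C^0$ estimate (uniform injectivity radius and second fundamental form bounds for $T_k$).

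\emph{Main obstacle.} Step 2 is the principal technical hurdle. The intrinsic volume form $\omega_X^n$ (controlled by the Ricci lower bound) and the extrinsic form $\omega_{FS}^n$ (pulled back via the embedding) can have very different local behavior, and extracting a uniform $L^p$ comparison across the family requires a delicate interplay between the intrinsic Ricci bound on $\omega_X$ and the Bergman-kernel lower bound coming from the partial $C^0$ estimate.
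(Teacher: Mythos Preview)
Your Step 2 is a genuine gap, not a technical lemma. You list inputs (i)--(iv) and then assert that ``combining these ingredients yields the required uniform $L^p$ integrability,'' but you give no argument, and none is apparent. The Ricci lower bound $\Ric(\omega_X)\geq -1$ controls $-\sq\ddb\log\omega_X^n$ only from below with respect to $\omega_X$; it gives no direct integral control on the ratio $\omega_X^n/\omega_{FS}^n$. The partial $C^0$ package bounds $|s_i|$ and $|\nabla s_i|_{\omega_X}$, which yields $\omega_{FS}\leq C\,\omega_X$ and hence a \emph{lower} bound on $e^{F_X}$---the wrong direction. An upper bound would require $\omega_{FS}\geq c\,\omega_X$ (uniform immersivity of $T_k$), which is not part of the standard partial $C^0$ output under only a Ricci lower bound. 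Your closing paragraph correctly flags this as the main obstacle but does not overcome it.

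Step 3 has a related problem: Ko\l odziej's H\"older estimate on $(X,\omega_{FS})$ depends on the geometry of $\omega_{FS}$ through a regularization step (Demailly approximation or local coordinate charts), and you have not established uniform curvature or injectivity-radius control for the varying induced metrics $\omega_{FS}$. The assertion that ``uniform injectivity radius and second fundamental form bounds for $T_k$'' follow from the partial $C^0$ estimate is unsupported: second fundamental form bounds would require control on second covariant derivatives of sections, not just first.

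The paper's proof bypasses both issues by a completely different mechanism: no $L^p$ density bound, no pluripotential theory. It approximates $\phi_X$ by the Bergman potentials $\phi_m=\frac{1}{m}\log\rho(X,m\omega_X,mL,h_L^m)+\phi_X$; the partial $C^0$ estimate, applied at every level $m$, gives $|\phi_X-\phi_m|\leq (\log A)/m$. The core of the argument is a gradient bound $|\nabla\phi_m|_{\theta_X}\leq C\,m^{\kappa-1}$ with respect to the \emph{Fubini--Study} metric $\theta_X$, obtained via an effective finite-generation scheme (Siu's global Skoda division, Li's thesis, plus a compactness argument using \cite{DS,LZ}): sections of $mL$ are expressed as polynomials in a fixed orthonormal basis of $H^0(X,L)$ with controlled coefficients, reducing the gradient estimate to the case $m=1$, which is elementary since those sections are restrictions of fixed linear forms on $\P^N$. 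Optimizing
\[
|\phi_X(x)-\phi_X(y)|\ \leq\ \frac{2\log A}{m}\ +\ C\,m^{\kappa-1}\,d_{\theta_X}(x,y)
\]
over $m$ along the sequence $m=(n+2)^r$ gives the H\"older exponent.
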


The idea of the proof is the global approximation of  $\phi_X$ by  Bergman potentials as in \cite{Don, Song1, Song2, PS1, PS2} combined with the partial $C^0$-estimate, i.e., quantitive Kodaira embeddings developed in \cite{T1, DS, LZ} and a finite generation scheme in  \cite{L}.

\v
Remark: If $r$ is a positive integer, we have the multiplication map 
$\cF(n,d)\ra \cF(n, rd)$ defined by $r\cdot(X,\o_X,L,h_L) =(X,r\o_X,rL,h_L^r)$. Let $\cF_r(n,d)$ be the image of this map.  If $r=r(n,d)\geq 1$ we may, in the proof of (\ref{holder}), replace $\cF(n,d)$ by  $\cF_r(n,d)$. We will make this type of replacement repeatedly throughout the proof.

\v
We will apply Theorem \ref{m} to  K\"ahler currents on smoothable singular projective varieties. Let us  recall some definitions before stating our results.

\v

\begin{definition}
Let $X$ be an $n$-dimensional projective variety with klt singularities.  $X$ is said to be smoothable if there is a flat family 
\begin{equation}\label{flatf}
\pi:\cX \subset \mathbb{P}^N \ra\D
\end{equation}
 over the unit disk $\D\sub\C$ such that 
\v
\begin{enumerate}

\item $X=\cX_0$ is the central fiber  and $\pi:\cX\backslash X_0\ra \D\backslash\{0\}$ is smooth, 

\medskip

\item 
$\mathcal{X}$ is $\mathbb{Q}$-Gorenstein and $K_{\cX/D}$ is $\Q$-Cartier. 

\end{enumerate}

\end{definition}

\v
\begin{definition} Let $X$ be a normal projective variety with klt singularities. A closed positive $(1,1)$-current   $\o_{\rm KE}$ with bounded local potentials is said to be a K\"ahler-Einstein current if it satisfies the following. 
\v
\begin{enumerate}

\item $[\o_{\rm KE}] \in H^2(X, \mathbb{Q})$ is an ample $\mathbb{Q}$-Cartier divisor. 

\medskip

\item Let $h_{KE} = (\o_{\rm KE})^n$ be the hermitian metric on $-K_X$. Then $Ric(h_{KE})= -\ddb \log h_{KE}$ is a well-defined closed $(1,1)$-current with 
$$Ric(h_{KE}) = \lambda \o_{\rm KE}$$
 for some $\lambda\in \{-1,0, 1\}$. 
 
\end{enumerate}

We say that $(X, \omega_{\rm KE})$ is a projective K\"ahler-Einstein variety. 

\end{definition}

For $\lambda = -1, 0, 1$, $h_{KE}$ corresponds to a negative, flat and positive hermitian metrics. By the smoothing property of the weak K\"ahler-Ricci flow \cite{ST3}, it is shown in \cite{SzT}, $\omega_{\rm KE}$ is in fact smooth on $X^{\rm reg}$, the regular part of $X$.   

\v










Our next result gives the H\"older continuity for K\"ahler-Einstein potentials on smoothable K\"ahler-Einstein variety. 

\v
\begin{theorem}\label{H} Let  $(X,\o_{\rm KE})$ be a smoothable projective K\"ahler-Einstein variety as the central fibre the smoothing family $\pi: \mathcal{X}\subset \mathbb{P}^N \rightarrow \D$.  Let $\theta\in [\omega_{\rm KE}]$ be a smooth background metric on $X$ and 
 $$\o_{\rm KE} = \theta + \ddb \phi_{\rm KE}. $$ 
  Then  there exist $C,\gamma>0$ such that
\be\label{holderke} 
|\phi_{{\rm KE}}(x)-\phi_{{\rm KE}}(y)|\leq C d_{\mathbb{P}^N }(x,y)^{\gamma}
\ee
 where $d_{\mathbb{P}^N}$ is the distance between $x$ and $y$ in $\mathbb{P}^N$.
\end{theorem}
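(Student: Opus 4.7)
The plan is to approximate $(X,\o_{\rm KE})$ by the smooth K\"ahler-Einstein nearby fibres $(X_t,\o_{{\rm KE},t})$ of the smoothing family $\pi:\cX\to\D$, apply Theorem~\ref{m} to each $X_t$ with uniform constants, and pass to the limit $t\to 0$. For $t\in\D\setminus\{0\}$ sufficiently small, $X_t$ is smooth, and by the smoothing theory for singular K\"ahler-Einstein metrics (\cite{DS, SzT, EGZ}) $X_t$ admits a smooth K\"ahler-Einstein metric $\o_{{\rm KE},t}$ satisfying $\Ric(\o_{{\rm KE},t})=\lambda\o_{{\rm KE},t}$, with $(X_t,\o_{{\rm KE},t})\to(X,\o_{\rm KE})$ in the Gromov-Hausdorff topology. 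Consequently the Ricci lower bound $\Ric(\o_{{\rm KE},t})\geq -\o_{{\rm KE},t}$ holds automatically and the diameters are uniformly bounded by some $D=D(n)$.

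The $\Q$-Gorenstein hypothesis on $K_{\cX/\D}$ supplies, after passing to a sufficiently divisible tensor power (and using the restriction of $\cO_{\mathbb{P}^N}(1)|_{\cX}$ together with a rescaling in the $\lambda=0$ case), a line bundle $L_t\to X_t$ with hermitian metric $h_t$ of curvature $\o_{{\rm KE},t}$. Hence $(X_t,\o_{{\rm KE},t},L_t,h_t)\in\cF(n,D)$ uniformly in $t$, and Theorem~\ref{m} furnishes a uniform integer $k$, uniform constants $C,\al>0$, and Bergman embeddings $T_{k,t}:X_t\to\mathbb{P}^{N_k}$ with
\[
|\phi_{X_t}(x)-\phi_{X_t}(y)|\leq C\,d_{{\rm FS},t}(x,y)^{\al},\qquad x,y\in X_t,
\]
where $\phi_{X_t}$ is the normalized potential of $\o_{{\rm KE},t}$ relative to $\o_{{\rm FS},t}=k^{-1}T_{k,t}^*\T_{\rm FS}$.

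To convert this into an estimate in the ambient distance $d_{\mathbb{P}^N}$, I would argue that for $k$ large the spaces $H^0(X_t,kL_t)$ fit into a holomorphic vector bundle over $\D$ by extension and semicontinuity, so an $L^2$-orthonormal basis varies continuously in $t$ and yields a family embedding $T_k:\cX\to\mathbb{P}^{N_k}$ fibrewise equal to $T_{k,t}$. Since both $T_k$ and the given embedding $\cX\hra\mathbb{P}^N$ are defined by ample line bundles on $\cX$, the induced extrinsic distance functions on each $X_t$ are H\"older-comparable uniformly in $t$. Combined with the uniform $L^\infty$ bound on $\phi_{X_t}$ of \cite{EGZ} and Arzel\`a-Ascoli, the $\phi_{X_t}$ subconverge in $C^0$ to a H\"older continuous function $\phi_X$ on $X$ with $\o_{\rm KE}=\o_{\rm FS}+\sq\ddb\phi_X$. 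Since $\t$ and $\o_{\rm FS}|_X$ are both smooth closed forms in $[\o_{\rm KE}]$, their difference admits a smooth global potential; thus $\phi_{\rm KE}-\phi_X$ is smooth on $X$ and (\ref{holderke}) follows.

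The main technical obstacle is the limiting procedure itself: one must verify that the Bergman data $T_{k,t}$ and the normalized potentials $\phi_{X_t}$ on the nearby smooth fibres converge in a sense strong enough to preserve both the H\"older estimate of Theorem~\ref{m} and the identification of the limit of $\phi_{X_t}$ with $\phi_{\rm KE}$ up to a smooth correction. This requires Donaldson-Sun-type convergence of polarized smooth K\"ahler-Einstein manifolds to the singular limit, together with uniform $L^\infty$ stability of solutions to the degenerate complex Monge-Amp\`ere equation on the central fibre.
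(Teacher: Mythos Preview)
Your overall strategy matches the paper's: approximate $(X,\o_{\rm KE})$ by smooth polarized K\"ahler--Einstein fibres $(X_t,\o_{{\rm KE},t})\in\cF(n,D)$, apply Theorem~\ref{m} uniformly, and pass to the limit. The paper proceeds exactly this way.

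However, there is a genuine gap in your limiting step. The H\"older estimate produced by Theorem~\ref{m} is with respect to the \emph{intrinsic} Fubini--Study distance $d_{X_t}$ on the fibre (the estimate in the proof is $|\phi_{X_t}(x)-\phi_{X_t}(y)|\leq C\,d_{\t_{X_t}}(x,y)^\al$, coming from a gradient bound with respect to the restricted metric $\t_{X_t}=\T_{\rm FS}|_{X_t}$). This is not the ambient distance $d_{\mathbb P^N}$. On each smooth $X_t$ the two distances are Lipschitz-equivalent, but the Lipschitz constant is not uniform as $t\to 0$: near the singularities of $X_0$ the intrinsic distance can be much larger than the extrinsic one. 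Your sentence ``the induced extrinsic distance functions on each $X_t$ are H\"older-comparable uniformly in $t$'' either confuses which distance Theorem~\ref{m} actually controls, or it asserts precisely the nontrivial estimate that is missing.

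The paper fills this gap with a separate proposition (Proposition~\ref{fs}): if $X_j\to X_\i$ algebraically with $X_\i$ normal, and $x_j\to x_\i$, $y_j\to y_\i$, then
\[
\limsup_{j\to\i} d_{X_j}(x_j,y_j)\ \leq\ C\,d_{X_\i}(x_\i,y_\i)^{\m}.
\]
The proof uses {\L}ojasiewicz's theorem to produce, for any $x_\i,y_\i\in X_\i$, a semianalytic arc in $X_\i$ of length $\leq C\,d_{\rm FS}(x_\i,y_\i)^\m$; this arc is then pushed, via a resolution of singularities and Zariski connectedness, to a piecewise analytic curve lying in $X_\i^{\rm reg}$ with controlled length; finally, algebraic convergence (implicit function theorem on the defining equations) transports this curve into $X_j$ with nearly the same length. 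Your argument based on continuity of orthonormal frames and comparison of ample embeddings does not supply this, and without it the H\"older exponent is lost in the limit.
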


Theorem \ref{H} confirms a conjecture in \cite{GGZd} in the case of smoothable K\"ahler-Einstein varieties.
\v
Remark: One can not expect \KE metrics on singular varieties to be more regular than H\"older continuous, as can already be seen in dimension two.

\v

We will now return to Problem 1 and study H\"older continuity for complex Monge-Amp\`ere equation on smoothable projective varieties. Suppose $X$ is an $n$-dimensional smoothable projective variety with klt singularities, where $X=\mathcal{X}_0$ in the flat family (\ref{flatf}). Since $X$ has klt singularities, we can choose an adaptive volume measure $\Omega$ on $X$, i.e., locally there exist $m \in \mathbb{Z}^+$,  a generator $\eta$ of  $mK_X$ and a smooth function $f$ such that 
$$\Omega = e^f |\eta|^{2/m}.$$
Let $\Theta$ be the Fubini-Study metric on $\mathbb{P}^N$ and $\theta_t = \Theta|_{\mathcal{X}_t}$ for $t\in \D$.  We can consider the following complex Monge-Amp\`ere equation on $X$
\begin{equation}\label{mafl}
(\theta_0 + \sq\ddb \phi)^n = e^F \Omega, ~ \sup_X \phi=0.
\end{equation}
where $\phi \in PSH(X, \theta_0)$ and $\int_X e^F \Omega = [\theta_0]^n$. It is proved in \cite{EGZ} that equation (\ref{mafl}) admits a unique continuous solution if $e^F \in L^p(X, \Omega)$ for some $p>1$. Naturally, one would hope that $\phi$ is H\"older continuous with respect to $\theta_0$ or $d_{\mathbb{P}^N}$ (the distance in $\mathbb{P}^N$). 

\v

\begin{theorem}\label{mainthm3} Let $X$ be an $n$-dimensional smoothable projective variety with klt singularities as the central fibre the smoothing family $\pi: \mathcal{X}\subset \mathbb{P}^N \rightarrow \D$. Suppose there exists $p>1$ and $A>0$ such that 
\begin{enumerate}

\item $e^F \in L^p(X, \Omega)$, 

\medskip

\item $-F\in PSH(X, A \theta_0)$, i.e., $A\theta_0 - \sq\ddb F \geq 0$. 

\end{enumerate}
Then there exists a unique H\"older continuous (with respect to $d_{\mathbb{P}^N}$) solution $\phi$ to the equation (\ref{mafl}). More precisely, there exist $C>0$ and $\gamma >0$ such that for any $x, y \in X$, 
\begin{equation}\label{hodsmo}
|\phi(x) - \phi(y)| \leq C d_{\mathbb{P}^N}(x, y)^\gamma, 
\end{equation}
where $C$ and $\gamma$ depend on $X$, $\theta_0$, $\Omega$, $p$, $A$ and $||e^F||_{L^p(X, \Omega)}$.

\end{theorem}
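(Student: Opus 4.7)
The plan is to reduce to the smooth setting by means of the smoothing family $\pi:\mathcal{X}\to\mathbb{D}$, apply a uniform H\"older estimate on nearby smooth fibers $\mathcal{X}_t$ in the spirit of Theorem~\ref{m}, and pass to the limit $t\to 0$. This parallels the proof of Theorem~\ref{H}, but with a general source term $F$ in place of the Kähler-Einstein equation, and the PSH hypothesis on $-F$ is what will substitute for the Ricci equation.

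\emph{Step 1 (Approximation).} I would first extend the data $(F,\Omega)$ to the family. Since $X\subset\mathbb{P}^N$ and $-F$ is $A\theta_0$-psh, by a Demailly regularization on the ambient $\mathbb{P}^N$ followed by restriction one produces smooth functions $F_t$ on each smooth fiber $\mathcal{X}_t$ with $-F_t\in PSH(\mathcal{X}_t, A\theta_t)$ and with $\|e^{F_t}\|_{L^p(\mathcal{X}_t,\Omega_t)}$ uniformly bounded, where $\Omega_t$ is the adaptive measure determined by $K_{\mathcal{X}/\mathbb{D}}$. One then solves Yau's equation
\[
(\theta_t+\sqrt{-1}\partial\bar\partial\phi_t)^n \ =\ c_t\, e^{F_t}\Omega_t,\qquad \sup_{\mathcal{X}_t}\phi_t=0,
\]
obtaining smooth $\phi_t$. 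Kolodziej's $L^\infty$ estimate gives a uniform $C^0$ bound on the $\phi_t$, and stability of Monge-Amp\`ere solutions forces $\phi_t\to\phi$ uniformly on $X$ as $t\to 0$.

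\emph{Step 2 (Uniform H\"older on $\mathcal{X}_t$).} Setting $\omega_t=\theta_t+\sqrt{-1}\partial\bar\partial\phi_t$, I would compute
\[
\operatorname{Ric}(\omega_t) \ =\ -\sqrt{-1}\partial\bar\partial F_t+\operatorname{Ric}(\Omega_t)\ \geq\ -C\theta_t,
\]
using the $A\theta_t$-psh bound on $-F_t$ and the klt control of $\operatorname{Ric}(\Omega_t)$. Together with the uniform volume and diameter bounds, this is the geometric input needed for the machinery that drives Theorem~\ref{m}: Bergman approximation by sections of $k L|_{\mathcal{X}_t}$ combined with the quantitative Kodaira embedding of \cite{DS,LZ} and the finite generation scheme of \cite{L}. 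Crucially, because the Ricci lower bound is against the polarization $\theta_t$ (rather than $\omega_t$), H\"ormander's $L^2$ estimate for $k$ large enough still produces a uniform peak section at every point, which is what the partial $C^0$ estimate requires. This produces Bergman potentials $\phi_{t,k}$ with $\|\phi_t-\phi_{t,k}\|_{L^\infty}\to 0$ at a quantitative rate as $k\to\infty$. Since the $\phi_{t,k}$ are Lipschitz in $d_{\mathrm{FS}}$, and $d_{\mathrm{FS}}$ is comparable to $d_{\mathbb{P}^N}$ up to the constants governing the Kodaira embedding, one extracts a uniform-in-$t$ H\"older estimate
\[
|\phi_t(x)-\phi_t(y)|\ \leq\ C\, d_{\mathbb{P}^N}(x,y)^{\gamma}.
\]

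\emph{Step 3 (Limit and main difficulty).} Passing $t\to 0$ and invoking uniform convergence $\phi_t\to\phi$ gives (\ref{hodsmo}). The main obstacle is step~2: one must ensure that all constants---the Kodaira exponent $k$, the $L^2$ estimate constant, and the H\"older exponent $\gamma$---remain uniform as the fibers $\mathcal{X}_t$ degenerate to the singular central fiber $X$. This requires handling the fact that the Ricci lower bound is only against $\theta_t$ and not against $\omega_t$ itself, as well as controlling the behavior of Bergman kernels uniformly across a degenerating family, for which the psh condition on $-F_t$ and the uniform diameter bound (via Kolodziej) are the key inputs.
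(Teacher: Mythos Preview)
Your overall architecture is correct and matches the paper's proof closely: extend $F$ to the ambient space, regularize, solve on smooth fibers, obtain a uniform H\"older estimate via Theorem~\ref{m}, and pass to the limit. However, there is a genuine gap at the point you yourself flag as the ``main obstacle,'' and the paper resolves it by a step that your proposal omits.

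\textbf{The missing Schwarz/Chern--Lu step.} You correctly observe that the computation only yields $\operatorname{Ric}(\omega_t)\geq -C\theta_t$, a lower bound against the \emph{background} polarization rather than against $\omega_t$ itself. You then assert that H\"ormander's estimate and the partial $C^0$ machinery still go through. This is not justified: Theorem~\ref{m} and the partial $C^0$ estimates of \cite{DS,LZ,Z} are formulated for the class $\mathcal{F}(n,d)$, which requires $\operatorname{Ric}(\omega_X)\geq -\omega_X$, and the finite generation scheme of \cite{L} uses this in the Skoda division argument. The paper closes this gap by a Chern--Lu type Schwarz lemma (as in \cite{ST1,ST2}): using the uniform $L^\infty$ bound on $\phi_t$ together with $\operatorname{Ric}(\omega_t)\geq -(A+B)\theta_t$, one applies the maximum principle to $\log \operatorname{tr}_{\omega_t}(\theta_t)-K\phi_t$ and obtains a uniform lower bound $\omega_t\geq \delta\,\theta_t$. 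This immediately upgrades the Ricci bound to $\operatorname{Ric}(\omega_t)\geq -\Lambda\,\omega_t$, placing $(\mathcal{X}_t,\omega_t,L_t,h_t)$ squarely inside $\mathcal{F}(n,d)$ so that Theorem~\ref{m} applies without modification. Without this step your Step~2 does not go through.

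\textbf{Secondary points.} (i) The diameter bound does not come from Ko\l odziej's estimate; the paper obtains it from the results of \cite{GPSS1} once the $L^q$ control on the volume form and the Ricci lower bound are in place. (ii) The paper uses a \emph{two-parameter} approximation: first extend $-F$ to $\mathbb{P}^N$ via \cite{CG}, then regularize to $\mathcal{F}_j$, and only then restrict to the fibers to get $F_{t,j}$. One lets $t\to 0$ with $j$ fixed to obtain H\"older continuity of $\phi_{0,j}$ on $X$, and finally lets $j\to\infty$ using the stability theorem of Dinew--Zhang \cite{DZ} to conclude for $\phi$. Your single-parameter scheme conflates these two limits and makes the final convergence $\phi_t\to\phi$ unclear, since $\phi_t$ lives on $\mathcal{X}_t$ rather than on $X$. (iii) The uniform $L^\infty$ estimate on the degenerating fibers requires the uniform $\alpha$-invariant bound of \cite{GGZ1,LY2}, not just Ko\l odziej's estimate on a fixed manifold.
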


\v

Let $\omega= \theta_0 + \ddb \varphi$ for the solution $\varphi$ to equation (\ref{mafl}). In addition to the algebraic smoothing condition for the underlying variety $X$, Theorem \ref{mainthm3} imposes a positivity assumption for the Monge-Amp\`ere measure $\omega^n$, i.e., 
\begin{equation}\label{riclowfa}
Ric(\omega)=-\ddb \log \omega^n = Ric(\Omega) - \ddb F \geq - C\theta_0
\end{equation}
for some $C>0$ since $Ric(\Omega)=-\ddb \log \Omega $ is bounded below by some negative propotion of $\theta_0$. The positivity assumption (\ref{riclowfa}) is equivalent to a uniform lower bound for the Ricci curvature of $\omega$.

\begin{corollary} Let $X$ be an $n$-dimensional smoothable projective variety with klt singularities with klt singularities as the central fibre the smoothing family $\pi: \mathcal{X}\subset \mathbb{P}^N \rightarrow \D$. Suppose $F\in C^\infty(X)$. 
Then there exists a unique H\"older continuous (with respect to $d_{\mathbb{P}^N}$) solution $\phi$ to the equation (\ref{mafl}).

\end{corollary}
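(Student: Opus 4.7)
My plan is to derive the corollary as a direct consequence of Theorem \ref{mainthm3}: there is essentially nothing to prove beyond checking that the smoothness of $F$ implies both hypotheses (1) and (2) of that theorem, after which the H\"older estimate (\ref{hodsmo}) transfers verbatim.

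\v
The first step would be to verify hypothesis (1). Since $X$ is compact and $F\in C^\infty(X)$, the function $F$ is bounded on $X$, say $\|F\|_{L^\infty(X)}\leq M$. Then $e^F\leq e^M$, and for \emph{every} $p>1$ one has
\[
\|e^F\|_{L^p(X,\Omega)}\leq e^M\,\Omega(X)^{1/p}<\infty,
\]
so $e^F\in L^p(X,\Omega)$ for any $p>1$, which is condition (1) of Theorem \ref{mainthm3}.

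\v
For hypothesis (2), I would use the fact that $F$, being smooth on $X\subset\mathbb{P}^N$, extends (or is assumed to be the restriction of) a smooth function in an ambient neighborhood of $X$ in $\mathbb{P}^N$; then $\sq\ddb F$ is a bounded smooth real $(1,1)$-form on that neighborhood. Since $\Theta$ is a K\"ahler form on the compact ambient $\mathbb{P}^N$ and $X$ is compact, there exists $A>0$ large enough so that $A\Theta-\sq\ddb F\geq 0$ in a neighborhood of $X$. Restricting to $X$ yields $A\theta_0-\sq\ddb F\geq 0$, which is precisely the statement that $-F\in PSH(X,A\theta_0)$. With both hypotheses verified, Theorem \ref{mainthm3} immediately produces the unique H\"older continuous solution $\phi$ of (\ref{mafl}) with the stated dependence of the constants.

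\v
I do not expect any genuine obstacle here: the entire content is the reduction of a single smoothness hypothesis to the two quantitative hypotheses of Theorem \ref{mainthm3}. If any care is required, it is in pinning down the meaning of $C^\infty(X)$ on the singular variety $X$---the natural reading in this context is smoothness in the ambient $\mathbb{P}^N$, for which the computation above is immediate; more generally any definition that ensures $F$ is globally bounded and $\sq\ddb F$ is dominated by a fixed multiple of $\theta_0$ on $X$ works equally well.
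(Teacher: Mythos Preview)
Your proposal is correct and is exactly the intended argument: the paper states the corollary immediately after Theorem \ref{mainthm3} with no separate proof, so the content is precisely the verification that smoothness of $F$ on the compact $X\subset\P^N$ yields both $e^F\in L^p(X,\Omega)$ and $-F\in\PSH(X,A\theta_0)$ for some $A>0$, after which Theorem \ref{mainthm3} applies verbatim. Your remark on the meaning of $C^\infty(X)$ is apt; in this paper's setup it is the restriction of an ambient smooth function, which is all that is needed.
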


\v 

There are various extensions of Theorem \ref{mainthm3} that will be investigated in forthcoming works. For example, the smoothable assumption in Theorem \ref{mainthm3} can be removed in complex dimension $3$ as well as in high dimensions for normal varieties that admit special resolution of singularities. 


\v
\v
\v


\v
\section{Partial $C^0$ estimate and effective finite generation}
\setcounter{equation}{0}
 \v
 We shall make use of the partial $C^0$ estimate of Zhang \cite{Z},
 which is extends earlier work of  \cite{T2, DS, LZ}.
 
  \v

 We will generally use $s$ to denote a section of $L$ and $\si$ to denote of section of $mL$ for $m>1$.
 \v
 After replacing $\cF(n)$ by $\cF_r(n)$ for an appropriate $r\geq 1$, we may assume $L\ra X$ is very ample for all $(X,\o_X,L,h_L)\in \cF(n)$. Let $(X,\o_X,L,h_L)\in \mathcal{F}(n)$ and define, for each $m\geq 1$  the Bergman kernel

$$\r(X,m\o,mL,h_L^m)(x):= \ \s_{j=0}^N |\si_j|^2_{h_L^m}
$$
where $\{\si_0,,,,.\si_{N_m}\}$ is an orthonormal basis of
$H^0(X,mL)$, i.e.

$$\<\si_i,\si_j\>_{\rm Hilb(h_L^m)}\ :=\ \I_X\, \si_i\bar\si_j\,h_L^m\,(m\o)^n\ = \ \d_{ij}
$$

 \v
 \begin{thm}{\rm (\cite{Z}).} \label{KZ} Given $n\geq 1$ and $S>0$ there exist $r=r(n,S)\in\N$ and $ A(n,S)>1$ with the following property.
 Let $(X,\o_X,L,h_L)\in \mathcal{F}_r(n)$ and assume $C(\o_X)\leq S$  where $C(\o_X)$ is the Sobolev constant of $\o_X$. Then for any $m\in\N$ we have 
 \be\label{pco}
 A^{-1}\leq \r(X,m\o_X,mL,h_L^m)\leq A
 \ee

   \end{thm}
The condition  $C(\o_X)\leq S$ means
\v
$$ \left(\I_X|u|^{2n\over n-1}\o_X^n\right)^{n-1\over n}
\ \leq \
S\left(\I_X |u|^2\o_X\ + \ \I_X |\na u|^2\,\o_X^n 
\right),\ \ \forall{u}\ \in\ W^{1,2}(X)
$$
 \v
\begin{corollary}\label{pc} 
Let $n,d>0$. Then there exists $A=A(n,d)>1$ and $r=r(n,d)>1$ satisfying (\ref{pco}) for all $(X,\o_X,L,h_L)\in \cF_r(n,d)$.
\end{corollary}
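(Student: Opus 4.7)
The plan is to reduce Corollary \ref{pc} to Theorem \ref{KZ} by establishing a uniform upper bound on the Sobolev constant $C(\o_X)$ depending only on $n$ and $d$. Once such a bound $S=S(n,d)$ is produced, Theorem \ref{KZ} immediately supplies the constants $r$ and $A$.

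First I would collect the geometric input on $\cF(n,d)$. Every $(X,\o_X,L,h_L)\in\cF(n,d)$ satisfies $\Ric(\o_X)\geq -1$ and $\diam(X,\o_X)\leq d$, so Bishop--Gromov volume comparison gives an upper bound $\vol(X,\o_X)\leq V(n,d)$. For a lower volume bound I would use that $[\o_X]$ is proportional to $c_1(L)$, whence $\vol(X,\o_X)=c_n L^n$ with $c_n$ a universal constant. After replacing $\cF(n)$ by $\cF_{r_1}(n)$ for an $r_1=r_1(n)$ large enough that $r_1L$ is very ample on every $X$ in the family (this is the rescaling convention used throughout the paper, and very ampleness along the family is guaranteed by the Donaldson--Sun and Liu--Sz\'ekelyhidi embedding theorem), one has $L^n\geq 1$, so $\vol(X,\o_X)\geq v(n)>0$.

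Second, with $\Ric\geq -1$, $\diam\leq d$, and two-sided volume bounds, a standard result in comparison geometry (for example, Croke-type isoperimetric inequalities or the uniform $L^{2n/(n-1)}$ Sobolev inequality valid on Riemannian manifolds with a Ricci lower bound, diameter upper bound, and non-collapsing volume lower bound) produces a constant $S=S(n,d)$ with $C(\o_X)\leq S$ for every member of the family. The rescaling $(X,\o_X,L,h_L)\mapsto (X,r\o_X,rL,h_L^r)$ leaves the Ricci form $\Ric(r\o_X)=\Ric(\o_X)$ unchanged (and hence, for $r\geq 1$, preserves the bound $\Ric\geq -1$ with respect to the new metric), while multiplying the diameter by $\sqrt{r}$ and the volume by $r^n$; re-running the argument on $\cF_r(n,d)\subset\cF(n,\sqrt r\,d)$ therefore yields a Sobolev bound $S=S(n,r,d)$ still depending only on $n$ and $d$.

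Finally I would feed $S=S(n,d)$ into Theorem \ref{KZ} to obtain $r_0=r_0(n,S)$ and $A=A(n,S)$ for which \eqref{pco} holds on $\cF_{r_0}(n)$; setting $r=\max(r_0,r_1)$ gives the corollary with constants depending only on $n$ and $d$. The main obstacle is the uniform Sobolev estimate: although classical, it genuinely needs the non-collapsing lower volume bound, which is exactly what very ampleness of $L$ (after the rescaling to $\cF_{r_1}$) delivers via $L^n\geq 1$. Beyond that, the remainder is bookkeeping about how Ricci, diameter, and Sobolev bounds transform under the rescaling operation $r\cdot$ used throughout $\cF$.
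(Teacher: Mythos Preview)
Your proposal is correct and follows essentially the same approach as the paper: the paper's proof is the single sentence ``This follows from Croke's theorem \cite{C} which guarantees that the Sobolev constants of $m\o_X$ are uniformly bounded for $(X,\o_X,L,h_L)\in\cF(n,d)$ and $m\geq 1$,'' and you have simply filled in the geometric input (Ricci lower bound, diameter bound, volume lower bound) that Croke's inequality requires before feeding the resulting Sobolev bound into Theorem~\ref{KZ}.

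One minor remark: your appeal to Donaldson--Sun / Liu--Sz\'ekelyhidi for very ampleness is unnecessary and risks circularity, since those results themselves rest on partial $C^0$ estimates of the type you are deducing. You do not need it: $c_1(L)^n$ is an integer for any line bundle on a compact complex manifold, and positivity of $\o_X=-\sq\ddb\log h_L$ already forces $L$ ample, hence $L^n\geq 1$ and the volume lower bound follows without any rescaling.
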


This follows from Croke's theorem \cite{C} which guarantees that the Sobolev constants of $m\o_X$ are uniformly bounded for $(X,\o_X,L,h_L) \in \cF(n,d)$ and $m\geq 1$. 

\v
Next we recall a theorem of Li \cite{L}.

\v
\begin{theorem}{\rm \cite{L}}. \label{L} Let $k\geq 1, l\geq 1$ and $m\geq (n+2+l)k$. 
Let
$(X,\o_X,L,h_L)\in \mathcal{F}(n,d)$,  and
$s_0\cdots s_{N_k}$  an orthonormal basis of $H^0(X,kL)$ with respect to ${\rm Hilb(h_L^k)}$. In particular, $N_k+1=\dim H^0(X,kL)$.  Then for $U\in H^0(X,mL)$, we can write
\v
\be\label{1}
U\ = \ \s_{\al_1,...,\al_l=0}^{N_k}\,U({\al_1,...,\al_l})\,s_{\al_1}\cdots s_{\al_l}
\ee
\v
where \v
$$ U({\al_1,...,\al_l})\in H^0(X,(m-kl)L),  \ 
$$
\v
$$ \|U({\al_1,...,\al_l})\|^2_{\rm Hilb(h_L^{(m-lk)})}\ \leq \ 
{
(n+l)!
\over
n!l!
}
{(m-kl)^n\over m^n
}
A^{2n+2+l}\,\|U\|^2_{{\rm Hilb(h_L^m)}}
$$
\end{theorem}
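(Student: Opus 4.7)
The plan is to derive the theorem as an application of Skoda's $L^2$ division theorem for sections of a positive line bundle on a compact K\"ahler manifold, with the partial $C^0$ estimate of Corollary \ref{pc} providing the critical non-degeneracy hypothesis. Setting $\rho(x):=\sum_{\alpha=0}^{N_k}|s_\alpha(x)|^2_{h_L^k}$, Corollary \ref{pc} yields the uniform two-sided bound $A^{-1}\leq \rho \leq A$ on $X$. This plays the role of the non-vanishing hypothesis on the generators in Skoda's theorem, ensuring that division by $\{s_\alpha\}$ introduces no singular denominators.

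First I would handle the single-step case $l=1$: given $V\in H^0(X,pL)$ with $p\geq (n+2)k$, Skoda's theorem, applied on $(X,\omega_X)$ with line bundle $pL$ and weight $h_L^p$, produces holomorphic sections $V(\alpha)\in H^0(X,(p-k)L)$ satisfying $V=\sum_\alpha V(\alpha)s_\alpha$ together with an $L^2$ estimate of the form
\[
\sum_\alpha \|V(\alpha)\|^2_{\mathrm{Hilb}(h_L^{p-k})} \;\leq\; (n+1)\, A^{c_0}\, \frac{(p-k)^n}{p^n}\, \|V\|^2_{\mathrm{Hilb}(h_L^{p})},
\]
for some fixed $c_0=c_0(n)$, the ratio $(p-k)^n/p^n$ reflecting the rescaling of the Hilbert norm when $p\omega_X$ is replaced by $(p-k)\omega_X$. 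The hypothesis $m\geq (n+2+l)k$ is precisely what is required to iterate this single-step division $l$ times: at the final step one divides a section of $(m-(l-1)k)L$ with $m-(l-1)k\geq (n+3)k\geq (n+2)k$, so Skoda's positivity threshold is respected throughout. The telescoping product of volume ratios across the $l$ steps collapses to the final factor $(m-kl)^n/m^n$, and the iterated expansion gives the claimed $U=\sum_{\alpha_1,\ldots,\alpha_l}U(\alpha_1,\ldots,\alpha_l)\,s_{\alpha_1}\cdots s_{\alpha_l}$ with $U(\alpha_1,\ldots,\alpha_l)\in H^0(X,(m-lk)L)$.

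The main obstacle, as I see it, is the bookkeeping of multiplicative constants. The combinatorial factor $\binom{n+l}{l}=(n+l)!/(n!\,l!)$ is not literally the product of the per-step factors $\binom{n+1}{1},\binom{n+2}{1},\ldots$, and the exponent $2n+2+l$ in $A^{2n+2+l}$ must cleanly decompose as a baseline $2(n+1)$ arising from Skoda's critical weight $(n+1)$, squared because of the two-sided bound on $\rho$, together with one additional power of $A$ absorbed per iteration. A cleaner way to recover exactly these constants is a one-shot variant: apply Skoda just once to the full family of monomials $\{s_{\alpha_1}\cdots s_{\alpha_l}\}\subset H^0(X,lkL)$, whose pointwise norm-sum is $\rho^l\in [A^{-l},A^l]$. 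In that version the factor $\binom{n+l}{l}$ appears intrinsically as the combinatorial weight in Skoda's $L^2$ estimate for division by the degree-$l$ symmetric tensors in $N_k+1$ sections, and the threshold $m\geq (n+2+l)k$ is precisely the positivity required. Either route reduces the theorem to a careful invocation of Skoda; the delicate part is a clean match of all constants.
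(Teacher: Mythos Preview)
Your approach---iterated Skoda division together with the two-sided Bergman kernel bound from Corollary \ref{pc}---is exactly the paper's. The paper invokes Siu's global Skoda theorem (Theorem \ref{Siu}), sets $G=kL$, $E=(m-(n+l+1)k)L-K_X$, checks that the metric $e^{-\psi_E}$ is positively curved using $\Ric(\omega_X)\ge -\omega_X$ (this is where the canonical bundle and the Ricci lower bound enter, a point you do not mention), and then iterates $l$ times, peeling off one $s_\alpha$ at each step.

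Your confusion about the constants, however, stems from a misreading of how the iteration works. In Siu's statement, the integer $l$ is a free parameter: if $U\in H^0((n+l+1)G+E+K_X)$, one division step produces $U_\alpha\in H^0((n+l)G+E+K_X)$ with constant $\frac{n+l}{l}$. The paper keeps $E$ and $\psi_E$ \emph{fixed} throughout and lets this Skoda parameter drop by one at each step. Thus the per-step factors are $\frac{n+l}{l},\frac{n+l-1}{l-1},\ldots,\frac{n+1}{1}$, and their product is exactly $\binom{n+l}{l}$. So the combinatorial factor \emph{is} literally the product of per-step constants---your ``$l=1$ at every step'' scheme (which would give $(n+1)^l$) is not what the paper does, and is why you could not match the constant. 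Likewise, the exponent $2n+2+l$ of $A$ is not accumulated one power per step: at the end of the iteration the weighted integrals carry $\rho_k^{n+1}$ on one side and $\rho_k^{n+l+1}$ on the other, and a single application of $A^{-1}\le\rho_k\le A$ yields $A^{n+1}\cdot A^{n+l+1}=A^{2n+2+l}$.

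Finally, your ``one-shot'' variant does not work as stated. Skoda's theorem, in any of its forms, divides by \emph{one} generator per application; applying it once to the family of monomials $\{s_{\alpha_1}\cdots s_{\alpha_l}\}$ would express $U$ as $\sum U_\beta\cdot(\text{one monomial})$, not as a sum over $l$-tuples with coefficients in $H^0((m-lk)L)$. The iteration is unavoidable, and the decreasing Skoda parameter is precisely what makes the constants come out right.
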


\v

\begin{corollary} \label{h1}
Let $p>0$ be a multiple of $(n+2)$ and let $(X,L)\in\cF_p(n)$. Then

$$ \bigoplus_{m=0}^\i H^0(X, mL)
$$
is generated, as a $\C$-algebra, by $H^0(X,L)$
\end{corollary}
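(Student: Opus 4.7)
The plan is to prove the equivalent statement that the multiplication map $S^m H^0(X,L) \to H^0(X, mL)$ is surjective for every $m\geq 1$; this is exactly the claim that $H^0(X,L)$ generates the section ring as a $\C$-algebra. The argument will be an induction on $m$ whose inductive step is a single application of Theorem \ref{L}.

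First I would unpack the setup. Writing $L = pL_0$ for the underlying datum $(X,\o_{X,0},L_0,h_{L_0})\in\cF(n)$ whose rescaling by $p$ produces the given $(X,L)\in\cF_p(n)$, we have $H^0(X,mL) = H^0(X, mp\, L_0)$, and the ring in question is the $p$th Veronese subring of $R := \bigoplus_M H^0(X, ML_0)$. Asking for generation of this Veronese in degree $1$ is asking for surjectivity of the multiplication map above.

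The central step is to apply Theorem \ref{L} to $(X,\o_{X,0},L_0,h_{L_0})$ with parameters $k=1$, $l=p$, and ambient degree $pm$, for each fixed $m\geq 2$. The hypothesis $pm \geq (n+2+l)k$ reduces to $p(m-1)\geq n+2$, and the binding instance $m=2$ requires exactly $p\geq n+2$, which is supplied by the assumption that $p$ is a positive multiple of $n+2$. Theorem \ref{L} then decomposes any $U\in H^0(X, pm\, L_0)$ as
\[
U \;=\; \s_{\al_1,\ldots,\al_p = 0}^{N_1} U(\al_1,\ldots,\al_p)\, s_{\al_1}\cdots s_{\al_p},
\]
where $\{s_\al\}$ is an orthonormal basis of $H^0(X,L_0)$ and each coefficient $U(\al_1,\ldots,\al_p)$ lies in $H^0(X, p(m-1)L_0) = H^0(X,(m-1)L)$. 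The crucial observation is that each monomial $s_{\al_1}\cdots s_{\al_p}$, being a product of $p$ sections of $L_0$, is itself a section of $pL_0 = L$; thus the decomposition realises $U$ as a sum of products of one element of $H^0(X,L)$ and one element of $H^0(X,(m-1)L)$.

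Combined with the trivial base case $m=1$, the induction yields $H^0(X,mL) \subseteq H^0(X,L)\cdot H^0(X,(m-1)L) \subseteq H^0(X,L)^m$ for every $m\geq 1$, which is the desired generation. There is no real obstacle in the plan: the entire content is the match between Li's decomposition and the Veronese grading, and the divisibility hypothesis on $p$ is calibrated precisely so that the $m=2$ instance of the inequality in Theorem \ref{L} is available.
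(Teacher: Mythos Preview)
Your proof is correct, but it takes a different route from the paper's. You run an induction, applying Theorem~\ref{L} once per step with $k=1$ and $l=p$ to peel off a single factor in $H^0(X,L)$ and leave a coefficient in $H^0(X,(m-1)L)$. The paper instead does a one-shot decomposition: writing $p=(n+2)k$ and, for a section $U$ of $(r+1)L$, applying Theorem~\ref{L} with that $k$ and $l=(n+2)r$, so that the coefficients $U(\al_1,\ldots,\al_l)$ already lie in $H^0(X,L)$ and the monomial $s_{\al_1}\cdots s_{\al_l}$ of $l=(n+2)r$ sections of $kL_0$ groups into $r$ blocks of $(n+2)$, each block giving an element of $H^0(X,L)$. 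Your inductive scheme is arguably cleaner and in fact only uses $p\geq n+2$, not the full divisibility (so your final remark that the divisibility hypothesis is ``calibrated precisely'' for your argument is an overstatement); the paper's choice of $k=p/(n+2)$ is where the divisibility is genuinely used. Both approaches rest on the same Skoda-type input, and neither needs the norm bounds in Theorem~\ref{L}, only the existence of the decomposition~(\ref{1}).
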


\v
Remarks.  Li's normalization of Hilb is a bit different, so in his formulation, the factor
${(m-kl)^n\over m^n
}
$ is missing. Also, in his proof of Theorem \ref{L}, he assumes $\Ric(\o)>0$ and proves a slightly stronger estimate. The proof in  the case $\Ric(\o)\geq -\o/2$ requires very minor modifications. For the sake of completeness, we provide it below.
\v
First we recall Siu's global version of the Skoda division theorem.
\v
\begin{theorem}{\rm (\cite{Siu})}\label{Siu}
Let $X$ be a compact projective manifold of dimension $n$, let $G\ra X$ and $E\ra X$ be holomorphic line bundles and 
$e^{-\psi_E}$ a hermitian metric on $E$ with $\ddb\psi_E>0$. Let $l\geq 1$ and $s_0,...,s_N\in H^0(X,G)$. Let 
$|s|^2=\s_{\al=0}^N|s_\al|^2$. If
\v 
$$\hbox
{$U\in H^0(X, (n+l+1)G +E+K_X)$\ \  satisfies} \ \ 
 \I_X{|U|^2e^{-\psi_E}\over |s|^{2(n+l+1)}} \ < \ \i
$$
\v
then $U=\s_{\al=0}^N U_\al s_\al$ with
\vskip .03in
$$U_0,...,U_N\in H^0(X,((n+l)G+E+K_X)\ \ {\rm and} \ \ 
\I_X{|U_\al|^2e^{-\psi_E}\over |s|^{2(n+l)}}\leq {n+l\over l}
\I_X{|U|^2e^{-\psi_E}\over |s|^{2(n+l+1)}}
$$
\end{theorem}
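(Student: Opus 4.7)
The plan is to prove Theorem \ref{Siu} via the Skoda--Demailly weighted $L^2$ division scheme, reducing the division problem to a weighted $\bar\partial$-equation on a natural sub-bundle of a trivial bundle. First I would form the tautological Koszul map
$$\Phi : \cO_X^{\oplus (N+1)} \longrightarrow \cO_X(G), \qquad (v_0,\ldots,v_N) \longmapsto \s_{\al} v_\al s_\al,$$
with kernel sub-sheaf $\cK \subset \cO_X^{\oplus(N+1)}$, defined pointwise on $X\setminus Z(s)$ as the orthogonal complement of $s=(s_0,\ldots,s_N)$ with respect to the metric on $G$. On $X\setminus Z(s)$ the section $U$ admits the canonical smooth lift
$$V^{(0)}_\al \ := \ \frac{\overline{s_\al}}{|s|^2}\, U \ \in\ \Gamma\bigl(X\setminus Z(s),\,(n+l)G+E+K_X\bigr),$$
which obviously satisfies $\s_\al V^{(0)}_\al s_\al = U$. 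However $V^{(0)}$ is not holomorphic, and the failure $\bar\partial V^{(0)}$ is a $(0,1)$-form valued in $\cK\otimes ((n+l)G+E+K_X)$, because $\sum(\bar\partial V^{(0)}_\al) s_\al = \bar\partial U = 0$ pointwise.

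The second step is to correct $V^{(0)}$ to a holomorphic lift by solving $\bar\partial W = \bar\partial V^{(0)}$ with $W$ valued in the same sub-bundle $\cK\otimes ((n+l)G+E+K_X)$, using H\"ormander's weighted $L^2$-theorem with weight $e^{-\psi_E}|s|^{-2(n+l)}$. The decisive input is Skoda's curvature inequality for the sub-bundle $\cK\otimes((n+l)G+E)$: combining $\ddb\log|s|^2 \geq -\Theta(G)$ on $X\setminus Z(s)$ with the formula relating the Chern curvature of $\cK$ to the second fundamental form of $\cK\hookrightarrow \cO^{\oplus (N+1)}$, the Bochner--Kodaira--Nakano identity produces the prefactor $(n+l)/l$ on the right-hand side. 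Setting $U_\al := V^{(0)}_\al - W_\al$ then yields sections holomorphic on $X\setminus Z(s)$ with the claimed weighted $L^2$-bound; the finiteness of $\I |U|^2 e^{-\psi_E}/|s|^{2(n+l+1)}$ controls the order of $U_\al$ along $Z(s)$ so that Riemann extension carries holomorphicity across the analytic subset $Z(s)$.

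The main obstacle is the sharpness of the constant $(n+l)/l$: this requires Skoda's precise curvature computation on the sub-bundle $\cK$, where one must trade the a priori negative contribution from the second fundamental form of $\cK \subset \cO^{\oplus (N+1)}$ against the $(n+l)\log|s|^2$ component of the weight so that the curvature operator in the Bochner formula is bounded below by $(l/(n+l))\,\mathrm{Id}$. The hypothesis $l\geq 1$ enters precisely here, and the strict positivity $\ddb\psi_E>0$ of the metric on $E$ supplies the small amount of extra positivity needed to close the estimate and guarantee solvability of the $\bar\partial$-equation in the weighted Hilbert space. A subsidiary check is that the canonical lift $V^{(0)}$, though defined only away from $Z(s)$, yields a $\bar\partial$-closed datum $\bar\partial V^{(0)}$ whose $L^2$-norm against the chosen weight is controlled by $\I |U|^2 e^{-\psi_E}/|s|^{2(n+l+1)}$, which follows from $|\bar\partial V^{(0)}_\al|^2 \lesssim |U|^2/|s|^2$ and the pointwise identity $\s_\al |\bar s_\al|^2 = |s|^2$.
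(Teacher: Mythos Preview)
The paper does not prove this statement at all: Theorem \ref{Siu} is quoted from \cite{Siu} as a black box and then applied iteratively to derive Theorem \ref{L}. So there is no ``paper's own proof'' to compare against.

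That said, your outline is the standard and correct Skoda--Demailly argument. The canonical lift $V^{(0)}_\al = \overline{s_\al}|s|^{-2}U$, the observation that $\bar\partial V^{(0)}$ lands in the kernel sub-bundle $\cK$, the weighted $\bar\partial$-solution on $\cK\otimes((n+l)G+E+K_X)$ with weight $e^{-\psi_E}|s|^{-2(n+l)}$, and the Riemann extension across $Z(s)$ are all correctly identified. Your remark that the sharp constant $(n+l)/l$ comes from balancing the second fundamental form of $\cK\subset\cO^{\oplus(N+1)}$ against the $(n+l)\ddb\log|s|^2$ piece of the curvature is exactly the heart of Skoda's computation, and the role of $\ddb\psi_E>0$ as the extra positivity needed to make H\"ormander's estimate go through is also right. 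One small imprecision: the estimate $|\bar\partial V^{(0)}_\al|^2\lesssim |U|^2/|s|^2$ is not quite the right bookkeeping; what one actually needs is that $\|\bar\partial V^{(0)}\|^2$ in the weighted norm on $\cK$-valued forms is controlled by the integral $\I|U|^2e^{-\psi_E}/|s|^{2(n+l+1)}$, and this comes out of the same curvature identity rather than a crude pointwise bound. But this does not affect the validity of your scheme.
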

\v
Here we define $|s|^{2(n+l+1)}:= \left(\s_{\al=0}^N |s_\al|^2\right)^{(n+l+1)}$.
\v

{\it Proof of Theorem 2.} Let
$(X,\o_X,L,h_L)\in \mathcal{F}(n,d)$ and write (locally)\ $h_L=e^{-\phi_L}$ and $\o_X=\sq\ddb\phi_L$. Then we can write

$$\bf \textcolor{magenta}{mL}\ = \ \textcolor{red}{(n+l+1)(kL)}\ + \ \textcolor{blue}{((m-(n+l+1)k)L - K_X)}\ + \ \textcolor{brown}{K_X}:=
\textcolor{red}{(n+l+1)G}+\textcolor{blue}{E}+\textcolor{brown}{K_X}
$$
\v
$$\psi_E\ = \ (m - (n+l+1)k)\phi_L\ - \ \log{\o_X^n\over n!}\hskip.7in
$$
Then
$$ \sq\ddb\psi_E\ = \ (m - (n+l+1)k)\o_X \ + \ \Ric(\o_X) \geq\  \ \o_X\ + \ \Ric(\o_X)\ \geq \ \o_X/2
$$
\v
since $(m - (n+l+1)k)\geq 1$ by assumption.
Note that if $0\leq r\leq l$ we have

\v
\be\label{rew} e^{-\psi} \ = \ {e^{-(m-rk)\phi}\over  
e^{-([n+l+1]-r)k\phi} } \cdot{\o^n\over n!}\ \ {\rm and}\ \ 
e^{-([n+l+1]-r)k\phi} |s|^{2([n+l+1]-r)}\ = \ \r_{k}^{([n+l+1]-r)}
\ee

\v
where $\r_k=\r(X,k\o_X,h_L^k,kL)$.
Let $U\in H^0(X,mL)$ and let $s_0,...,s_{N_k}$ be an orthonormal basis of $kL$. Then Theorem \ref{Siu} immplies that 
\v
$$ U\ = \ \s_{\al_1=0}^{N_k} U(\al_1;1)s_{\al_1}\ 
{\rm and} \ \ 
\I_X{
|U(\al_1;1)|^2e^{-\psi}\over |s|^{2[(n+l+1)-1]}} \leq {[n+l+1]-1\over [l+1]-1}\I_X{|U|^2e^{-\psi}\over |s|^{2(n+l+1)}}
$$
\v
where $U(\al_1;1)\in H^0(X, (m-k)L)$ for $0\leq \al_1\leq N_k$. Applying Theorem \ref{Siu} again:

$$ U(\al_1;1)\ = \ \s_{\al=0}^{N_k} U(\al_1,\al_2;2)s_{\al_2}\ 
{\rm and} \ \ 
\I_X{
|U(\al_1,\al_2;2)|^2e^{-\psi}\over |s|^{2([n+l+1]-2)}} \leq {[n+l+1]-2\over [l+1]-2}\I_X{|U(\al;1)|^2e^{-\psi}\over |s|^{2([n+l+1]-1)}}
$$
with $U(\al_1,\al_2;2)\in H^0(X, (m-2k)L)$.\
Continuing in this fashion, we obtain

$$ U(\al_1,...,{\al_{r-1}};1)\ = \ \s_{\al_r=0}^{N_k} U(\al_1,\al_2,...,\al_r;r)s_{\al_r}\ 
{\rm and} \ \ 
$$
\v
$$
\I_X{
|U(\al_1,\al_2,...,\al_r;r)|^2e^{-\psi}\over |s|^{2([n+l+1]-r)}} \leq {[n+l+1]-2\over [l+1]-r}\I_X{|U(\al_1,...,\al_{r-1};r-1)|^2e^{-\psi}\over |s|^{2([n+l+1]-r)}}
$$
\v
Applying this step for $r=1,2,...,l$ and using (\ref{rew}) we obtain
\v
$$
\I_X{
|U(\al_1,\al_2,...,\al_l;l)|^2e^{-(m-1k)\phi}\over \r_{k}^{n+1}} {\o^n\over n!}
\leq {(n+l)!\over n!l!}\I_X{|U|^2e^{-m\phi}\over 
\r_{k}^{(n+l+1)}}\cdot{\o^n\over n!}
$$
\v
Finally, estimate  $\r_{k}^{n+1}$ on the left by $A^{n+1}$ and $\r_{k}^{n+l+1}$ on the right by $A^{-(n+l+1)}$ \qed
\v
{\it Proof of Corollary \ref{h1}}.
If $(X,L')\in\cF_p(n,d)$ there exists $(X,L)\in\cF(n,d)$ such that  $L'~=~pL$.
Then
our goal becomes the following: Let $r\geq 1$ and   
$U\in H^0(X, (r+1)L')= H^0((r+1)pL)$. We must show that $U$ is a linear combination of terms of the form $U_0U_1\cdots U_{r}$ where $U_j\in H^0(X,L')=H^0(X,pL)$.

\v

 Choose $k$ such that $p=(n+2)k$ and let $l=(n+2)r$. Then
 $m :=  (r+1)p = (n+2+l)k$. In particular, $U\in H^0(X, mL)$ and $m-lk=p$.
 
 \v
 
  Now by Theorem \ref{L} we can write

$$ U\ = \ \s_{\al_1,...,\al_l=1}^{N_k} U(\al_1,...,\al_l)s_{\al_1}\cdots s_{\al_l}
$$

Since $l=(n+2)r$ we can write 
$s_{\al_1}\cdots s_{\al_l}\ = \ U_1\cdots U_r$ with $U_j\in H^0(X, pL)$. \qed

\v
\section{Weak effective finite generation}
\setcounter{equation}{0}
\v

In Theorem \ref{L} the coefficients $U({\al_1,...,\al_l})$ in (\ref{1}) are sections and the bounds they satisfy are uniform and effective. We  need another version of this result in which we allow the coefficients to be real numbers, and with bounds that are uniform but not effective.

\v

\begin{proposition}\label{weak} Let $n,d,m\geq 1$. Then there exist $\lambda_m>0$ and $r=r(n,d)$  with the following properties.
Let
$(X,\o_X,L,h_L)\in \mathcal{F}_r(n,d)$, and
$s_0\cdots s_{N}$  an orthonormal basis of $H^0(X,L)$ with respect to ${\rm Hilb}(h_L)$. Let $U\in H^0(X,mL)$. Then we can write
\v
\be\label{2}
U\ = \ \s_{\al_1,...,\al_m=0}^{N}\,U({\al_1,...,\al_m})\,s_{\al_1}\cdots s_{\al_m}
\ee
\v
where \v
$$ U({\al_1,...,\al_m})\in \R,  \ 
$$
\v
\be\label{lamdam} |U({\al_1,...,\al_m})| \ \leq \ \lambda_m
\|U\|_{{\rm Hilb(h_L^m)}}
\ee
\end{proposition}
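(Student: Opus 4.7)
The plan is to interpret Proposition \ref{weak} as a linear-algebra statement about the Moore--Penrose right inverse of the multiplication map, and to establish the required uniform norm bound via a soft compactness argument driven by Corollary \ref{pc} and Corollary \ref{h1}.

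First I would fix $r=r(n,d)$ to be a sufficiently large multiple of $n+2$ so that Corollaries \ref{pc} and \ref{h1} both apply on $\mathcal{F}_r(n,d)$ and $L$ is very ample on every member. Equip $H^0(X,L)$ with the inner product $\mathrm{Hilb}(h_L)$, $H^0(X,mL)$ with $\mathrm{Hilb}(h_L^m)$, and $\mathrm{Sym}^m H^0(X,L)$ with the induced symmetric inner product. Corollary \ref{h1} then implies that the multiplication map
\[
\mu_m:\mathrm{Sym}^m H^0(X,L)\longrightarrow H^0(X,mL),\qquad s_{\alpha_1}\odot\cdots\odot s_{\alpha_m}\longmapsto s_{\alpha_1}\cdots s_{\alpha_m},
\]
is surjective. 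Its Moore--Penrose right inverse $\nu_m=\mu_m^{\ast}(\mu_m\mu_m^{\ast})^{-1}$ has operator norm equal to $\sigma_{\min}(\mu_m)^{-1}$, the reciprocal of the smallest positive singular value of $\mu_m$. Expanding $\nu_m(U)$ in the orthogonal symmetric basis $\{s_{\alpha_1}\odot\cdots\odot s_{\alpha_m}\}$ and symmetrizing the resulting coordinates yields a decomposition as in (\ref{2}) with
\[
|U(\alpha_1,\ldots,\alpha_m)|\;\le\;C(m,N+1)\,\sigma_{\min}(\mu_m)^{-1}\,\|U\|_{\mathrm{Hilb}(h_L^m)},
\]
where $N+1=\dim H^0(X,L)$. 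Integrating Corollary \ref{pc} against $\omega_X^n$ and using Croke's bound on $\mathrm{Vol}(X,\omega_X)$ gives $N+1\le C(n,d)$, so the combinatorial prefactor is absorbed into a constant depending only on $m,n,d$. Hence the whole proposition reduces to
\[
\inf_{(X,\omega_X,L,h_L)\in\mathcal{F}_r(n,d)}\sigma_{\min}(\mu_m)\;>\;0.
\]

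For this I would argue by contradiction and compactness. A putative sequence $(X_i,\omega_i,L_i,h_i)\in\mathcal{F}_r(n,d)$ with $\sigma_{\min}(\mu_m^{(i)})\to 0$ embeds via the Kodaira map $T_k:X_i\hookrightarrow\mathbb{P}^{N_k}$ of Donaldson--Sun and Liu--Sz\'ekelyhidi (for a uniform $k=k(n,d)$) into a fixed projective space, and the Hilbert polynomials of the images are bounded by $A m^n\,\mathrm{Vol}(X_i,\omega_i)\le C(n,d)m^n$ by Corollary \ref{pc}, so all images lie in a finite union of components of a compact Hilbert scheme. After passing to a subsequence one obtains an algebraic/Gromov--Hausdorff limit $X_\infty\subset\mathbb{P}^{N_k}$ that is a normal projective variety with klt singularities, and orthonormal bases $\{s_\alpha^{(i)}\}$ of $H^0(X_i,L_i)$ can be chosen to converge to an orthonormal basis of $H^0(X_\infty,L_\infty)$. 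The multiplication maps $\mu_m^{(i)}$ then converge to $\mu_m^{(\infty)}$ as operators between spaces of uniformly bounded dimension, so $\sigma_{\min}(\mu_m^{(i)})\to\sigma_{\min}(\mu_m^{(\infty)})$; if $\mu_m^{(\infty)}$ is again surjective, the limit is positive, giving a contradiction.

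The decisive step, and principal obstacle, is therefore verifying that Corollary \ref{h1} extends to the limit $X_\infty$. Its proof relies only on Siu's $L^2$ division theorem and the partial $C^0$ estimate, both of which are available on $X_\infty$: the former is algebraic and applies on any projective variety via a resolution of singularities, and the latter passes to Gromov--Hausdorff limits by Donaldson--Sun, Liu--Sz\'ekelyhidi, and Zhang. A purely algebraic alternative avoids this entirely, since the locus where $\mu_m$ is surjective is Zariski open on the compact Hilbert scheme into which $\mathcal{F}_r(n,d)$ maps, so $\sigma_{\min}(\mu_m)$ admits a positive lower bound on the closure of that image. Once this is settled, the remaining steps---converting the operator bound into the pointwise bound on coefficients---are routine finite-dimensional linear algebra.
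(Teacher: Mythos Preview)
Your approach is essentially the same as the paper's: argue by contradiction, pass to a Gromov--Hausdorff/algebraic limit $(X_\infty,L_\infty)$ via Donaldson--Sun and Liu--Sz\'ekelyhidi, use convergence of sections and their products, and conclude from the surjectivity of the multiplication map on the limit. The only cosmetic difference is that the paper picks a specific subset $A$ of monomials $\{s_\alpha^\infty:\alpha\in A\}$ forming a basis of $H^0(X_\infty,mL_\infty)$ and applies Cramer's rule to the convergent Gram matrices $\langle s_\alpha^j,s_\beta^j\rangle$, whereas you phrase the same linear algebra through the smallest singular value of $\mu_m$ and its Moore--Penrose inverse; both packagings are equivalent and correct.
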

The proof will be via contradiction and makes use some results of \cite{DS} which we now recall.
\v

\subsection{Gromov-Hausdorff Limits of K\"ahler Manifolds} We state here some results from \cite{DS} that will be needed in the proof of Proposition \ref{weak}. There exists $r=r(n,d)$ with the following properties. Let $(X_i,\o_i,L_i,h_i)\in \cF_r^0(n,d)$. After passing to a subsequence, the sequence converges, in the Gromov-Hausdorff sense, to a limit $(X_\i,\o_\i,L_\i,h_\i)$. Here $X_\i$ is a metric space whose regular part $X_\i^{\rm reg}\sub X_\i$ is an open dense complex manifold of dimension $n$, $\o_\i$ is a \K metric on  $X_\i^{\rm reg}$ and $L_\i$ is a holomorphic line bundle on $X^{\rm reg}$ with metric $h_\i$  and  curvature $\o_\i$. The meaning of the convergence 
$(X_i,\o_i,L_i,h_i)\ra (X_\i,\o_\i,L_\i,h_\i)$ is as follows:
\v

There are open sets $U_i\sub U_{i+1}\sub\cdots\sub X^{\rm reg}_\i$, exhausting $X^{\rm reg}$ with relatively compact containments,  open sets $V_i\sub X_i$, and diffeomorphisms $\G_i:U_i\ra V_i$ which are approximate \K isometries, in the sense that $\o_i\ra\o_\i$ or, more precisely, $\|\G_i^*\o_i\ra \o_\i\|_{C^\i(\O)}\ra 0$  for any relatively compact open set $\O\sub U_i$. Similarly $J_i\ra J_\i$ and $g_i\ra g_\i$. We will usually omit $\G_i$ in what follows to lighten the notation and similarly we will identify $U_i$ with $V_i$. Furthermore, $\O\sub U_i$ will always denote a relatively compact open set.
\v
We also have $L_i\ra L_\i$. This means there is a locally finite open cover $X^{\rm reg}_\i=\cup_\al A_\al$
and holomorphic sections $s^\i_\al\in H^0(A_\al, L_\i)$ and $s^i_\al\in H^0((A_\al\cap U_i), L_i)$ such that the sequence of smooth cocycles $[(s^i_\al)(s^i_\b)^{-1}]\circ\G_i$  converges uniformly to the holomorphic cocycle
$[(s^\i_\al)(s^\i_\b)^{-1}]$  on $A_\al\cap A_\b\cap\O$.
Similarly, if $\si_\i\in H^0(X_\i,mL_\i)$ and $\si_i\in H^0(X_i,mL_i)$ we say
$\si_i\ra \si_\i$ if $\si_i/(s^i_\al)^m\ra \si_\i/(s^\i_\al)^m$ uniformly on $A_\al\cap \O$. If $\si_i\ra \si_\i\in H^0(X_\i,mL_\i)$
and $\tau_i\ra \tau_\i\in H^0(X_\i,lL_\i)$ one easily sees

\be\label{products}  \si_i\tau_i\ \ra \ \si_\i\tau_\i\in H^0(X_\i,(m+l)L_\i)
\ee
\v
There is an isometry $Q_i^m: H^0(X_\i,mL_\i)\ra H^0(X_i,mL_i)$ such that $Q_i^m(s)\ra s$ as $i\ra\i$. In particular, if $\{s_{0}^\i,...,s_{N}^\i\}$ is an orthonormal basis of $H^0(X_\i,L_\i)$, then
$\{s_{0}^i,...,s_{N}^i\}$ is an orthonormal basis of $H^0(X_i,L_i)$,
where $s_{\al}^i:= Q_i(s_{\al}^\i)$.

\v
\v
For $1\leq i\leq \i$ we
let $T_i: X_i\hookrightarrow\P^N$ be the embedding of $H^0(X_i,L_i)$ with respect to a orthonormal basis (so $T_i$ is only defined up to the action of $U(N+1)$). Then there is a compact metric space $(X_\i, d_\i)$ such that, after passing to a subsequence if necessary, $(X_i,\o_i)\ra (X_\i, d_\i)$ in the Gromov-Hausdorff topology. In addition, $T_i(X_i):= W_i$ is a normal sub-variety of $\P^N$ (smooth if $i<\i$), such that $W_i\ra W_\i$ algebraically, i.e. the corresponding point converge in the Hilbert scheme. Moreover, we have $T_i(x_i)\ra T_\i(x_\i)$ in $\P^N$ if $x_i\ra x_\i$. The map $T_\i: X_\i\ra W_\i$ is a homeomorphism - in fact it is a biholomorphic map with respect to the natural structure of complex variety on $X_\i$.

 \v
 
 We summarize this discussion with the following diagram:

\be\label{diag}
\begin{tikzcd}
  & X_i\ \arrow{r}{T_i}\arrow{d}[swap]{\rm CC} 
  &\  W_i\ \arrow[hookrightarrow]{r}{}\arrow{d}{\rm Hilb} 
  & \ \P^N 
  & {} \\
  & X_\i\ \arrow{r}{T_\i} 
  &\ W_\i\ \arrow[hookrightarrow]{r}[swap]{} 
  & \ \P^N  
  \end{tikzcd}
\ee
 
Here the vertical arrows represent convergence in the metric (Cheeger-Colding) sense and the the algebraic (Hilbert scheme) sense respectively. The  horizontal arrows isomorphisms:  $T_i$ is an algebraic isomorphism, and $T_\i$ is a holomorphic isomorphism.  For $1\leq i\leq \i$, the maps $W_i\hookrightarrow\P^N$ are inclusions.
\v\v
\subsection{Proof of Proposition \ref{weak}}  We will give the argument for $\cF_r^0(n,d)$. We don't need to use the fact that the singular sets of $X_\i$ and $W_\i$ match, so proof of the proposition for 
$\cF_r(n,d)$ is the same, using instead the results of \cite{LZ}. Assume not. Let $\{s_0^\i,...,s_N^\i\}$ be an orthonormal basis for $H^0(X_\i,L_\i)$ and $s_p^i=Q_i(s_p^\i)$. Then there exists $m\geq 1$ and $(X_j,\o_j,L_j,h_j)\in \mathcal{F}(n,d)$ and $U_j\in H^0(X,mL)$ with $\|U_j\|_{\rm Hilb(h^m)}=1$ satisfying the following. If

$$
U_j\ = \ \s_{\al_1,...,\al_m=0}^{N}\,U_j({\al_1,...,\al_m})\,s^{(j)}_{\al_1}\cdots s^{(j)}_{\al_m}\ = \ \s_\al U_j^\al s^{(j)}_\al
$$

for some $U_j({\al_1,...,\al_m})\in\R$, then $\max_\al \{|U_j(\al)|\} \geq j$.

\v

\v
After passing to a subsequence, the theorem of \cite{DS} says that $X_i\ra X_\i$ in the Gromov-Hausdorff topology, and if for $1\leq i\leq \i$,  $T_i: X_i\hookrightarrow\P^{N}$ is an embedding by the orthonormal basis
$\{s_{0}^\i,...,s_{N}^i\}
$
 of $H^0(X_i, L_i)$, and $W_i\ra W_\i$ in the algebraic sense. \v

After passing to a further subsequence, there exists
$U_\i\in H^0(X,mL)$ such that $U_j\ra U_\i$. This holds since after scaling and a unitary transformation we may assume $U_j =\si_0^j$ is the first element in an orthonormal basis for $H^0(X_j,mL_j)$.

\v

Now choose a subset $A\sub \{(\al_0,...,\al_m)\,:\, 1\leq \al_j\leq N\}$ with $N_m+1$ elements such that
$\{s_\al^\i\,:\, \al\in A\}$ is a basis of $H^0(X_\i,mL_\i)$. This is possible since $\oplus_{m=0}^\i H^0(X_\i,mL)$ is generated by $H^0(X_\i,L)$.
\v
Write $U_\i$ as a linear combination of basis elements:

$$ U_\i\ = \ \s_{\al\in A} U_\i^\al\,s_\al^\i
$$
\v

with $U^\al_\i\in\R$. Then

$$\left<U_\i, s_\b^\i\right>_{{\rm Hilb}(h_\i^m)}\ = \ \s_{\al\in A}U_j^\al  \<s_\al^\i,s_\b^\i\>_{{\rm Hilb}(h_\i^m)}
$$

Note that  $\det\<s_\al^\i,s_\b^\i\>_{{\rm Hilb}(h_\i)} \ \not= 0$ since $\{s_\al^\i:\al\in A\}$ is a basis. Since $s_\al^j\ra s_\al^\i$ by virtue of (\ref{products}), we see that 
$\det\<s_\al^j,s_\b^j\>_{{\rm Hilb}(h_j)}  \not= 0$ for $j$ sufficiently large. Moreover, $U^\i$ is defined so that
$U^j\ra U^\i$. Cramer's rule now implies that $U_j^\al\ra U_\i^\al$. But this contradicts our assumption, namely 
$\max_\al \{|U_j^\al|\} \geq j$. \qed

\v\v
\section{Proof of Theorem \ref{m}}
\setcounter{equation}{0}
\v

After replacing $\cF(n,d)$ by $\cF_r(n,d)$ for some fixed highly divisible positive integer $r$, we may assume $m_0(n,d)=1$ (defined in section 1) and $r(n,S)=1$ (defined in 
Theorem \ref{KZ}). Also, without loss of generality we may assume $K=1$ so that all elements in $\cF(n,d)$ have the same Hilbert polynomial $p(x)$. In other words, if 
$(X,\o_X, L, h_L)\in\cF(n,d)$ we may assume
$$ \dim H^0(X, mL)\ = \ p(m)\ \ {\rm for\ all}\ \ m\geq 1
$$
\v
We let $N_m+1=p(m)$ and $N=N_1$. If $(X,\o_X, L, h_L)\in\cF(n,d)$ then $T_X: X\hookrightarrow\P^N$ is an embedding, where 
\be\label{TX} T_X(x)\ = \ (s_{0,X}(x), s_{1,X}(x),...,s_{N,X}(x))
\ee
\v

\v
Here $\{s_{0,X}, s_{1,X},...,s_{N,X}\}$ is an orthonormal basis of $H^0(X,L)$ with respect to 
${\rm Hilb}_{h_L}$. In this way we may view $X$ as a submanifold of $\P^N$, for all $(X,\o_X,L,h_L)\in\cF(n,d)$ so  $\t_X=\T_{\rm FS}\big|_X$ where $\T_{\rm FS}$ is the Fubini-Study metric on $\P^N$ and $L=O_{\P^N}(1)\big|_X$ and $h_X=H_{\rm FS}\big|_L$. With these identifications we have

\be\label{hx} h_X\ = \ {1\over |s_0|^2+\cdots + |s_N|^2}
\ee
\v 
as well as

$$\hbox{$h_L=h_Xe^{-\phi_X}$,\ \
$\Ric(h_X)=\t_X$ and
$ \Ric(h_L)= \o_X = \t_X+\sq \ddb \phi_X$.

}
$$
\v
Note that if $\{E_0,...,E_N\}$ is the  basis of 
$H^0(\P^N,O(1))$, given by $E_j(x_0,...,x_N)=x_j$, then we may write

\be\label{restr}  s_{j,X}\ = E_j\big|_X
\ee

\v

In other words, $E_j$ is a section of $H^0(\P^N,O(1))$, independent of $X$, and is simultaneously an extension of $s_{j,X}$ from $H^0(X,L)$ to $H^0(\P^N,O(1))$ for all $(X,L)\in\cF(n,d)$.
\v
Now we fix $(X,\o_X, L, h_L)\in \cF(n,d)$. For $m\geq 1$ we define the Bergman potential
\v
$$
\phi_m\ = \ {1\over m}\log\left(
\s_{i=0}^{N_m} |\si_i|_{h_X^m}^2
\right)
\ = \ 
{1\over m}\log\left(
\s_{i=0}^{N_m} |\si_i|_{h_L^m}^2\right)
\ + \ \phi_X
$$
\v
where $\{\si_0,...,\si_{N_m}\}$ is an orthonormal basis of $H^0(X, mL)$ and 
$h_L=h_Xe^{-\phi_X}$. Note that (\ref{hx}) implies $\phi_1=0$.
\v
Corollary \ref{pc} implies 
\v

\be\label{logA} |\phi_X-\phi_m|\ = \ {1\over m}|\log\r(X, m\o_X, mL, h_L^m)| \ \leq \ {\log A\over m}\ 
\ee

\v

In particular, if $(X,L)\in\cF(n,d)$ then

$$ |\phi_X|\ = \ |\phi_X-\phi_1|\ \leq \ \log A
$$
\v
We will also make use of the following. If $\si\in H^0(X,mL)$ then

\be\label{hilb} \|\si\|^2_{\rm Hilb{(h_X^m)}} \ \leq \  \|\si\|^2_{\rm Hilb{(h_L^m)}}\ \leq \ e^{mA} 
\|\si\|^2_{\rm Hilb{(h_X^m)}}
\ee

since  

$$
\|\si\|^2_{\rm Hilb(h_X^m)}\ = \ \I_X\si\bar\si h_X^m {(m\o)^n\over n!}\ = \ 
\I_X\si\bar\si h_L^m  e^{m\phi_X}{(m\o)^n\over n!}\ 
$$

and $-mA\leq m\phi_X\leq 0$.

\v
Fix $x\in X$ and assume that $\si_i(x)=0$ if $i>0$ so 
$\r_{h_L^m}(x)=|\si_0|^2_{h_L^m}$ (this can be achieved by a unitary transformation).
\v
Let $\na$ be the Chern connection on $L$ for $h_X$. The induced connection on $mL$ for $h_X^m$ will be denoted by $\na$ as well. Now we compute
\v
$$
\pl_p\phi_m(x)\ = 
{1\over m}\cdot {\s_{i=0}^{N_m} (\na_p\si_i)\bar \si_i h_X^m\over \s_{i=0}^{N_m} \si_i\bar\si_ih_X^m}(x)
\ = \  {1\over m}{(\na_p \si_0)\bar \si_0\,h_X^m\over
\si_0\bar\si_0\, h_X^m}(x)\ 
=
{1\over m}{(\na_p \si_0)\bar \si_0\,h_L^m\over
\si_0\bar\si_0\, h_L^m}(x)
$$
\be\label{phim}
\Lra \ 
|\na\phi_m|^2_{\t_X}(x)
\ = \ \t_X^{p\bar q}\pl_p\phi_m\,\pl_{\bar q}\phi_m\ = \ 
{1\over m^2}{\,
\big[\t_X^{p\bar q}(\na_p\si_0)(\na_{\bar q}\bar\si)h_L^m\big]
\over
\r_{h_L^m}
}(x)\ \leq \ {A\over m^2}|\na\si_0|^2_{\t_X,h_L^m}(x)
\ee
\v
where have made  use of  $\r_{_{h_L^m}}(x)=|\si_0|^2_{h_L^m}(x)$ in the last identity.
Here we have defined $|\na\si_0|^2_{\t_X,h_L^m}:=
\t_X^{p\bar q}(\na_p\si_0)(\na_{\bar q}\bar\si_0)h_L^m$. Now our goal is to find a good estimate for $|\na\si_0|^2_{\t_X,h_L^m}$.

\v\v

\begin{lemma}\label{lemma} Let $m\geq 1$. There exists $\m_m(n,d)>0$ with the following property. For every $(X,\o_X,L,h_L)\in\mathcal{F}(n,d)$ and for every $\si\in H^0(X,mL)$ we have
\be\label{stronger} |\si|^2_{h_L^m}\ \leq A\|\si\|^2_{\rm Hilb(h_L^m)}\ \ {\rm and} \ \ 
|\na\si|_{\t_X,h_L^m} \ \leq \ \m_m\|\si\|_{\rm Hilb(h_L^m)}
\ee
\end{lemma}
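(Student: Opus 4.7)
The plan is to treat the two inequalities separately. The first is immediate from Cauchy--Schwarz combined with the partial $C^0$ estimate: if $\{\si_0,\ldots,\si_{N_m}\}$ is an orthonormal basis of $H^0(X,mL)$ with respect to ${\rm Hilb}(h_L^m)$ and $\si = \s_i a_i\si_i$, then applying Cauchy--Schwarz pointwise gives
$$|\si(x)|^2_{h_L^m} \leq \Bigl(\s_i |a_i|^2\Bigr)\Bigl(\s_i |\si_i(x)|^2_{h_L^m}\Bigr) = \|\si\|^2_{{\rm Hilb}(h_L^m)}\cdot\r(X,m\o_X,mL,h_L^m)(x),$$
and Corollary \ref{pc} bounds the Bergman kernel by $A$.

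For the gradient estimate, I would use Proposition \ref{weak} to write
$$\si = \s_{\al_1,\ldots,\al_m=0}^{N} U(\al_1,\ldots,\al_m)\,s_{\al_1}\cdots s_{\al_m}, \qquad |U(\al_1,\ldots,\al_m)| \leq \lambda_m \|\si\|_{{\rm Hilb}(h_L^m)},$$
where there are $(N+1)^m$ summands and $N+1 = p(1)$ depends only on $n,d$. The crucial point is that the degree-one basis $\{s_0,\ldots,s_N\}$ admits a uniform $C^1$ bound independent of $(X,\o_X,L,h_L)\in\cF(n,d)$: by (\ref{restr}) each $s_\al$ is the restriction to $X$ of the fixed section $E_\al \in H^0(\P^N, O(1))$, and since $\t_X = \T_{\rm FS}|_X$ and $h_X = H_{\rm FS}|_L$, the Chern connection on $(L, h_X)$ is the restriction of the Chern connection on $(O(1), H_{\rm FS})$. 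Hence both $|s_\al|_{h_X}$ and $|\na s_\al|_{\t_X,h_X}$ are bounded on all of $X$ by some $M = M(n,d)$ coming from fixed smooth data on $\P^N$.

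Applying the Leibniz rule to each monomial (with $\na$ the induced connection on $mL$ compatible with $h_X^m$) yields $|\na(s_{\al_1}\cdots s_{\al_m})|_{\t_X, h_X^m} \leq m M^m$, and summing over the $(N+1)^m$ multi-indices with the coefficient bound, followed by the fiber-norm conversion $h_L^m = h_X^m e^{-m\phi_X}$ at a cost of $A^{m/2}$ (using $|\phi_X|\leq \log A$ from (\ref{logA})), produces the required estimate with $\m_m := \lambda_m\,(N+1)^m\,m\,M^m\,A^{m/2}$. The only real obstacle --- separating orthonormal sections of $mL$ into bounded combinations of products of degree-one sections --- has already been handled by Proposition \ref{weak}, so the non-effectivity of $\m_m$ is inherited entirely from the non-effectivity of $\lambda_m$.
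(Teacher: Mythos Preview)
Your proposal is correct and follows essentially the same route as the paper: the first inequality via the partial $C^0$ estimate, and the second by combining the restriction-from-$\P^N$ argument for a uniform $C^1$ bound on the degree-one sections $s_\al=E_\al|_X$ with Proposition \ref{weak} and the Leibniz rule. The paper organizes the argument slightly differently---it isolates the case $m=1$ first and then feeds that bound $\mu_1$ into the general case---but the ingredients and the resulting constant $\mu_m$ are the same up to bookkeeping.
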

\v

{\it Proof.} The first estimate follows from the partial $C^0$ estimate. As for the second, we must show
that
\v
\be\label{weaker} \sup\{x\in X: \t_X^{p\bar q}(\na_p\si)(\na_{\bar q}\bar\si)h_L^m(x)\}\ \leq \ \m_m\I_X \sigma\bar\sigma h_L^m{(m\o)  ^n\over n!}
\ee

\v  It suffices to show that

\be\label{weaker1} \sup_X |\na\si|^2_{\t_X,h_X^m}=\sup\{x\in X: \t_X^{p\bar q}(\na_p\si)(\na_{\bar q}\bar\si)h_X^m\}\ \leq \ \m_m 
\|\si\|^2_{\rm Hilb(h_X^m)}
\ee

\v

making use of (\ref{hilb}) and $h_L^m=h_X^me^{-m\phi_X}\leq e^{mA}h_X$.

\v

We first  prove (\ref{weaker1}) in the case where $m=1$. Let
$(X,\o_X,L,h_L)\in\mathcal{F}(n,d)$ and let
 $s=\si\in H^0(X,L)$.
After scaling, we may assume $\|s\|_{\rm Hilb(h_L))}=1$. After transformation by a matrix in $U(N+1)$, we may assume that $s$ is the first element in the orthonormal basis that defines $T: X\ra \P^{N}$. In other words, $s$ is the restriction of $E_0$, the first of the $N+1$ coordinate functions on $\C^{N+1}$. Thus we can write (\ref{restr}) as

\be\label{restr1}  s\ = E_0\big|_X
\ee

\v

 Let $\Theta_{\rm FS}$ denote the Fubini-Study metric on $\P^N$ and $H_{\rm FS}$ the Fubini-Study metric on $O(1)$. Then $s, \t_X$ and $h_X$ are the restrictions of $E_0, \Theta_{\rm FS}$ and $H_{\rm FS}$ from $H^0(\P^N,O(1)), \P^N$ and $O(1)$ to $H^0(X,L),  X$ and $L$. Let $\ti\na$ be the connection on $O(1)$ for $H_{\rm FS}$. Then, for $x\in X$ and  $V\in T_xX\sub T_x\P^N$, we have 
 
 $$(\na_Vs)(x) = (\ti\na_V E_0)(x)
 $$

 Thus if $V_1,..,V_n$ is an orthonormal frame for $T_xX$ and $V_1,..,V_n, V_{n+1},...,V_N$  and orthonormal frame for $T_x\P^N$, we have
 $$ |\na\si|_{\t_X,h_X}^2\ = \ \s_{j=1}^n|\na_{V_j}\si|^2\ = \  \s_{j=1}^n|\ti\na_{V_j}E_0|^2\ \leq \ 
 \ \s_{j=1}^N|\na_{V_j}E_0|^2\  = \ |\ti\na E_0|^2_{\Theta_{\rm FS},H_{\rm FS}}(x)
 $$
 
So we see
\be\label{mu1}
|\na s|^2_{\t_X,h_L}(x)\ \leq \ |\na s|^2_{\t_X,h_X}(x)\ \leq \ |\ti\na  E_0|^2_{\Theta_{\rm FS},H_{\rm FS}}(x)
\ \leq\ \m_1:=\ \sup_{\P^N}\ |\ti\na  E_0|^2_{\Theta_{\rm FS},H_{\rm FS}}
\ee
\v
 since $|\ti\na  E_0|^2_{\Theta_{\rm FS},H_{\rm FS}}$ is a continuous function on the compact set  $\P^N$ and is thus bounded by a constant $\m_1=\m_1(N)$. 
The proves (\ref{weaker1}) in the case $m=1$.
\v
Now assume $m>1$ and let $\si\in H^0(X,mL)$. Proposition (\ref{weak}) implies
\be\label{2}
\si\ = \ \s_{\al_1,...,\al_m=1}^{N}\,\si({\al_1,...,\al_m})\,s_{\al_1}\cdots s_{\al_m}
\ee
\v
where 
$$ \si({\al_1,...,\al_m})\in \R,  \ 
$$
\v
\be\label{lamdam} |\si({\al_1,...,\al_m})| \ \leq \ \lambda_m
\|\si\|_{{\rm Hilb(h_L^m)}}
\ee

Thus

$$
\na\si\ = \ \s_{\al_1,...,\al_m=0}^{N}\,\si({\al_1,...,\al_m})\s_{j=1}^m\,s_{\al_1}\cdots \na s_{\al_j}\cdots s_{\al_m}\ \Lra \ 
$$

$$ |\na\si|_{\t_X,h_L^m}\ \leq \ (N+1)^m\lambda_m\,mA^{m-1}\m_1\|\si\|_{{\rm Hilb(h_L^m)}}
\ :=\ \m_m\,\|\si\|_{{\rm Hilb(h_L^m)}} 
$$
\v
where $\m_1$ is defined in (\ref{mu1}).
\v

\begin{lemma}\label{lemma} There exists $B>0$ with the following property. For all $(X,\o,L,h_L)\in\mathcal{F}(n,d)$ and for all $\si\in H^0(X,mL)$ we have
\be\label{stronger}  
|\na\si|_{\t_X,h_L^m} \ \leq \ B^m\|\si\|_{\rm Hilb(h_L^m)}
\ee
\end{lemma}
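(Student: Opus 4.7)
The plan is to repeat the argument of the preceding Lemma but to replace the non-effective coefficient bound from Proposition \ref{weak} (which supplies only a constant $\lambda_m$ of unknown $m$-dependence) by the \emph{effective} Skoda decomposition of Theorem \ref{L}. The trade-off is that one can no longer decompose $\sigma\in H^0(X,mL)$ all the way into scalar-coefficient products of sections of $L$; instead one stops one fixed level short, carrying a coefficient in the fixed-degree space $H^0(X,(n+2)L)$, to which the preceding Lemma is then applied at the \emph{fixed} level $n+2$.

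Concretely, assume $m>n+2$; the finitely many smaller cases are absorbed into $B$ directly from the preceding Lemma. Apply Theorem \ref{L} with $k=1$ and $l=m-(n+2)$, the boundary case $m=(n+2+l)k$, to obtain
$$\sigma=\sum_{\al_1,\dots,\al_l=0}^{N}\sigma(\al_1,\dots,\al_l)\,s_{\al_1}\cdots s_{\al_l},\qquad \sigma(\al_1,\dots,\al_l)\in H^0(X,(n+2)L).$$
The effective coefficient bound of Theorem \ref{L}, combined with $\binom{n+l}{l}\leq(n+l)^n/n!$ and $l\leq m$, gives $\|\sigma(\al_1,\dots,\al_l)\|_{\rm Hilb(h_L^{n+2})}\leq C_1^{m}\|\sigma\|_{\rm Hilb(h_L^m)}$ for some $C_1=C_1(n,d)$ independent of $m$. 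Since $n+2$ is a fixed value, the preceding Lemma applied at level $n+2$ furnishes the uniform pointwise bounds
$$|\sigma(\al_1,\dots,\al_l)|_{h_L^{n+2}}\leq\sqrt A\,\|\sigma(\al_1,\dots,\al_l)\|_{\rm Hilb(h_L^{n+2})},\quad |\na\sigma(\al_1,\dots,\al_l)|_{\t_X,h_L^{n+2}}\leq\mu_{n+2}\,\|\sigma(\al_1,\dots,\al_l)\|_{\rm Hilb(h_L^{n+2})},$$
where both $\sqrt A$ and $\mu_{n+2}$ depend only on $n,d$.

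Finally, differentiating the decomposition via the Leibniz rule expresses $\na\sigma$ as a sum of $(N+1)^l$ terms of the form $[\na\sigma(\vec\al)]\,s_{\al_1}\cdots s_{\al_l}$ together with $l(N+1)^l$ terms of the form $\sigma(\vec\al)\,s_{\al_1}\cdots\na s_{\al_j}\cdots s_{\al_l}$. Inserting the above bounds and the pointwise estimates $|s_\al|_{h_L}\leq\sqrt A$ and $|\na s_\al|_{\t_X,h_L}\leq\mu_1$ from the $m=1$ case of the preceding Lemma yields
$$|\na\sigma|_{\t_X,h_L^m}\leq(l+1)(N+1)^l\max(\mu_{n+2},\mu_1)\,A^{l/2}\,C_1^m\,\|\sigma\|_{\rm Hilb(h_L^m)}\leq B^m\|\sigma\|_{\rm Hilb(h_L^m)}$$
for a suitable $B=B(n,d)$, once the polynomial factor $l+1\leq m+1$ is absorbed into the exponential. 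The only subtle point, which is precisely why Theorem \ref{L} is required in place of Proposition \ref{weak}, is that $\mu_{n+2}$ must be \emph{uniform} over the family $\cF_r(n,d)$; this uniformity is automatic because the preceding Lemma is invoked at the fixed level $n+2$, independent of $m$.
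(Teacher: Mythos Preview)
Your proposal is correct and follows essentially the same route as the paper: both apply Theorem \ref{L} with $k=1$ and $l=m-(n+2)$ to decompose $\sigma$ into products of basis sections of $L$ with coefficients in the fixed space $H^0(X,(n+2)L)$, invoke the preceding Lemma at the fixed level $n+2$ (and level $1$) to control the factors and their gradients, and then differentiate via the Leibniz rule, absorbing the $(N+1)^l$ count and the $A^l$ growth into the exponential $B^m$.
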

\v
{\it Proof.} We must show
\be\label{stronger} \sup\{x\in X: \t_X^{p\bar q}(\na_p\si)(\na_{\bar q}\bar\si)h_L^m(x)\}\ \leq \ B^{{2m}}\I_X \sigma\bar\sigma h_L^m{(m\o^n)\over n!}
\ee

Proposition \ref{weak} gives us a the following weaker statement. For each $m\geq 1$ there exists $\m_m>0$ such that
\v
\be\label{weaker} \sup\{x\in X: \t_X^{p\bar q}(\na_p\si)(\na_{\bar q}\bar\si)h_L^m(x)\}\ \leq \ \m_m\I_X \sigma\bar\sigma h_L^m{(m\o)^n\over n!}
\ee

\v i.e.,
$|\na\si|^2_{\t_X,h_L^m} \ \leq \ \m_m\|\si\|_{\rm Hilb(h_L^m)}$ (this is weaker since we must further show $\m_m=B^m$). 
In order to prove (\ref{stronger}) we apply Theorem \ref{L} with

$$ k=1,\ \  l=m-n-2,\ \ \ {\rm so \ that} \ \  m-lk=n+2
$$

\v
After scaling, we may assume $\|\si\|_{\rm Hilb(h_L^m)}=1$ so, 
we obtain 

\v
$$
\si\ = \ \s_{\al_1,...,\al_l=0}^{N_1}\,U({\al_1,...,\al_l})\,s_{\al_1}\cdots s_{\al_l}
$$

\v

$$ \|U({\al_0,...,\al_l})\|^2_{
{\rm Hilb}(h_L^{n+2})
}\  \ \leq \
{(m-2)!\over n!(m-n-2)!}\cdot {(n+2)^n\over m^n}A^{2n+2+l}
\|\si\|^2_{
{\rm Hilb}(h_L^{m})
}\ 
$$
$$\leq c_1(n,d)A^m\,\|\si\|_{
{\rm Hilb}(h_L^{m})
}^2=c_1(n,d)A^m
$$

\v
where $c_1(n,d)={(n+2)^n}A^n$.
Now  

\v

$$\na\si\ = \ \s_{\al_1,...,\al_l=1}^{N_1}
(\na U({\al_1,...,\al_l}))
s_{\al_1}\cdots  s_{\al_l}\ 
+ \ 
\s_{\al_1,...,\al_l=1}^{N_1}U({\al_1,...,\al_l})
\s_{p=1}^ls_{\al_1}\cdots \na s_{\al_p}\cdots s_{\al_l}
$$

\v
Noting that $U(\al_1,...,\al_l)\in H^0((n+2)L)$ and $|s_{\al_j}|^2\leq A$ and applying (\ref{weaker}) we have
$$|\na\si|_{\t_X,h_L^m}\ \leq \ 
\s_{\al_1,...,\al_l=1}^{N_1} \m_{n+2}[c_1(n,d)A^m]\cdot  A^l
\ +\ 
\s_{\al_1,...,\al_l=1}^{N_1}A[c_1(n,d)A^m]\cdot 
\s_{p=1}^l A^{l-1}\m_1
$$
$$\ = \ 
 N_1^l\left[\m_{n+1}[c_1(n,d)A^m]\cdot  A^l
\ +\ 
A[c_1(n,d)A^m]\cdot lA^{l-1}\m_1\right]\ \leq \ B^m
$$
\v
Where $B = N_1\m_{n+1}c_1(n,d)A^3\m_1$. Here we use the inequality $A^mlA^l\leq A^mA^mA^m$. This proves Lemma \ref{lemma}.
\v

\v
\begin{lemma} Let $n,d\geq 1$. Then there exists $C=C(n,d)>0$ and $\k=\k(n,d)>0$  with the following properties. Let $r\geq 2$ and $m=(n+2)^r$.
Let $U\in H^0(X,mL)$.  Then

$$  |\na U|_{\t_X,h_L^m}\ \leq \ Cm^\k\|U\|_{\rm Hilb(h_L^m)}
$$
\end{lemma}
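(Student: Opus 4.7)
The plan is to induct on $r$. The base case $r=2$ is immediate from the preceding lemma, which gives a finite constant bound $|\nabla U|_{\theta_X,h_L^{m_2}}\leq B^{m_2}\|U\|_{\rm Hilb(h_L^{m_2})}$ with $m_2=(n+2)^2$ fixed.

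For the inductive step, I would apply Theorem \ref{L} with $k=m_{r-2}=(n+2)^{r-2}$ and $l=(n+1)(n+2)$; a short computation shows $(n+2+l)k=m_r$ (with equality) and $m_r-lk=m_{r-1}$. Hence any $U\in H^0(X,m_r L)$ decomposes as
\begin{equation*}
U \ = \ \sum_{\alpha_1,\ldots,\alpha_l} U(\alpha_1,\ldots,\alpha_l)\,s_{\alpha_1}\cdots s_{\alpha_l},
\end{equation*}
where $\{s_\alpha\}$ is an orthonormal basis of $H^0(X,m_{r-2}L)$ and $U(\alpha)\in H^0(X,m_{r-1}L)$. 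Because $l$ depends only on $n$, the summed $L^2$-form of Skoda's division (Theorem \ref{Siu} iterated as in Theorem \ref{L}) yields the key bound $\sum_\alpha\|U(\alpha)\|^2_{\rm Hilb(h_L^{m_{r-1}})}\leq c_0(n,d)\|U\|^2_{\rm Hilb(h_L^{m_r})}$, \emph{without} the spurious factor $(N_k+1)^l$ one would get by summing the individual bounds.

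Leibniz then splits $\nabla U=T_1+T_2$, with $T_1=\sum_\alpha(\nabla U(\alpha))s_{\alpha_1}\cdots s_{\alpha_l}$ and $T_2=\sum_\alpha U(\alpha)\sum_p s_{\alpha_1}\cdots(\nabla s_{\alpha_p})\cdots s_{\alpha_l}$. Two applications of Cauchy--Schwarz, combined with the partial $C^0$ estimate (Corollary \ref{pc}), the summed coefficient bound above, and the inductive hypothesis applied to $U(\alpha)\in H^0(m_{r-1}L)$, give
\begin{equation*}
|T_1|^2\leq c_1\,M(m_{r-1})^2\|U\|^2, \qquad |T_2|^2\leq c_2\,\Sigma_{r-2}\|U\|^2,
\end{equation*}
where $\Sigma_k:=\sup_{x\in X}\sum_\alpha|\nabla s_\alpha|^2_{\theta_X,h_L^k}(x)$ is the Bergman jet kernel and $c_1,c_2=c_{1,2}(n,d)$. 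This yields the recurrence $M(m_r)^2\leq c_1 M(m_{r-1})^2+c_2\Sigma_{r-2}$.

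The main obstacle is the polynomial bound $\Sigma_k\leq C k^\beta$ for some $\beta=\beta(n,d)$, uniform over $\mathcal{F}_r(n,d)$. The naive estimate $\Sigma_k\leq (N_k+1)\max_\alpha|\nabla s_\alpha|^2$ obtained from the per-section inductive bound does \emph{not} close the induction: plugging $M(k)=O(k^\kappa)$ gives $\Sigma_k=O(k^{n+2\kappa})$, pushing the exponent upward at every step and forcing super-polynomial growth of $M(m_r)$. Instead, one bounds $\Sigma_k$ directly from a partial $C^1$ estimate. Writing $\rho_k=\sum_\alpha|s_\alpha|^2_{h_L^k}$, the Bochner identity for holomorphic sections gives $\sum_\alpha|\nabla s_\alpha|^2_{\theta_X,h_L^k}=\Delta_{\omega_X}\rho_k+kn\rho_k$, and pointwise control of $\Delta\log\rho_k$ follows from the nonnegativity of the Fubini--Study pullback $T_k^*\Theta_{\rm FS}=\sqrt{-1}\ddb\log\rho_k+k\omega_X\geq 0$ together with the two-sided partial $C^0$ bound on $\rho_k$; this yields $\Sigma_k\leq Ck$. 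Granted such a bound, iterating the recurrence gives
\begin{equation*}
M(m_r)^2\leq c_1^{r-2}M(m_2)^2+c_2\sum_{j=0}^{r-3}c_1^j m_{r-2-j}^\beta,
\end{equation*}
which is polynomial in $m_r$ since $r=\log_{n+2}m_r$ makes $c_1^r=m_r^{\log_{n+2}c_1}$ polynomial, and the geometric sum is likewise polynomial. One then sets $\kappa=\kappa(n,d)$ equal to the larger of $\tfrac{1}{2}\log_{n+2}c_1$ and $\tfrac{\beta}{2}$.
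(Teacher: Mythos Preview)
Your strategy diverges from the paper's at exactly the point you flag as the ``main obstacle,'' and the argument you sketch there does not work. The Bochner identity you write is ill-posed: with the Chern connection of $h_L^k$ it reads $\sum_\alpha|\nabla s_\alpha|^2_{\omega_X,h_L^k}=\Delta_{\omega_X}\rho_k+kn\rho_k$, with the $\omega_X$-norm on the left, whereas the quantity you must control is $|\nabla s_\alpha|_{\theta_X,h_L^k}$, and no uniform inequality $\omega_X\leq C\theta_X$ is available on $\mathcal F(n,d)$. Even setting this aside, the positivity $T_k^*\Theta_{\rm FS}=\sqrt{-1}\partial\bar\partial\log\rho_k+k\omega_X\geq 0$ yields only the \emph{lower} bound $\Delta_{\omega_X}\log\rho_k\geq -kn$, and a two-sided $C^0$ bound on $\rho_k$ cannot be converted into any upper bound on $\Delta_{\omega_X}\rho_k$: a bounded function can have arbitrarily large Laplacian at a point. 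So $\Sigma_k\leq Ck$ is unjustified, and without it your recurrence does not close.

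The paper avoids $\Sigma_k$ altogether by a pointwise peak-section device. With $k=(n+2)^{r-2}$ and $l=(n+1)(n+2)$, fix $x\in X$ and choose the orthonormal basis $\{s_0,\dots,s_{N_k}\}$ of $H^0(X,kL)$ so that $s_0(x)\neq 0$, the sections $s_1,\dots,s_n$ vanish to first order at $x$, and $s_{n+1},\dots,s_{N_k}$ vanish to order $\geq 2$. Every monomial in the Skoda decomposition containing two or more factors from $\{s_1,\dots,s_{N_k}\}$ is then $O(x^2)$, so at $x$ only $n+1$ terms survive:
\[
U=\sum_{p=0}^n u_p\,s_0^{l-1}s_p+O(x^2),\qquad \nabla U(x)=(\nabla u_0)\,s_0^{\,l}+\sum_{p=0}^n u_p\,l_p^*\,s_0^{l-1}\nabla s_p,
\]
with $u_p\in H^0(X,(n+2)^{r-1}L)$ and $\|u_p\|\leq q(n,d)\|U\|$ by Theorem~\ref{L}. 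One bounds $|\nabla u_0|_{\theta_X}$ and $|\nabla s_p|_{\theta_X}$ by the crude exponential estimate $B^{(\cdot)}$ of the preceding lemma (which was proved extrinsically, by restriction from $\mathbb P^N$, precisely so as to obtain the $\theta_X$-norm), giving $|\nabla U|\leq D\cdot B^{(n+2)^{r-1}}\|U\|$ with $D=D(n,d)$. An inner induction on a depth parameter $t$, reapplying the same peak-section decomposition to $u_0$ and to the $s_p$, then yields $|\nabla U|\leq D^t B^{(n+2)^{r-t}}\|U\|$; setting $t=r-2$ leaves the harmless constant $B^{(n+2)^2}$ and accumulates only $D^{r}=m^{\log_{n+2}D}$. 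The point is that at every stage the sum has exactly $n+1$ terms, so neither $N_k$ nor any jet-kernel quantity ever appears.
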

\v
{\it Proof.}
Let  
\v

$$   k=(n+2)^{r-2},\ \ l=(n^2+3n+2) \ \ \Lra
 \ \ m-kl\ = \ (n+2)^{r-1}=(n+2)k
$$
\v
Thus $m=(n+2)k+kl=(n+2+l)k\geq (n+2+l)k$ so we may apply Theorem \ref{L}.
\v
Fix $x\in X$ and choose the orthonormal basis $\{s_0,...,s_{N_k}\}$ of $H^0(X,kL)$ such that $s_0(x)\not=0$, $s_1,...,s_n$ vanishing at $x$ to first order and the remaining sections vanishing to at least second order. Then Theorem \ref{L} implies
\

\be\label{u} U \ = \ \s_{p=0}^n u_p\,s_{0}^{l-1}\,s_{p} \ + \ O(x^2)
\ee
\v
where

$$ \|u_p\|^2_{(n+2)^{r-1}}\ \leq \ \left({n^2+4n+2\atop n}\right)
A^{2n+2+n^2+3n+2}\|U\|^2_{(n+1)^r}\ := \ q(n,d)^2\cdot \|U\|^2_{(n+2)^r}
$$

\v

Applying $\na$ to (\ref{u}) and evaluating at $x$, we get

\v

\be\label{nu}\na U\ = \  (\na u_0) s_{0}^{l}
\ + \ 
\s_{p=0}^n\,u_p\, l_p^*s_{0}^{l-1}\na s_{p}
\ee

where $l_p^*=l$ if $p=0$ and $l_p^*=1$ otherwise. Thus

\v
$$
|\na U|_{\t_X,(n+2)^r}
\ \leq \ 
|\na u_0|_{\t_X,(n+2)^{r-1}} |s_{0}|_{(n+2)^{r-2}}^{l}
\ + \ 
\s_{p=0}^n\,|u_p|_{(n+2)^{r-1}}\, l\,|s_{0}|_{(n+2)^{r-2}}^{l-1}|\na s_{p}|_{\t_X, (n+2)^{r-2}}
$$

\v

$$\leq B^{(n+2)^{r-1}}\|u_0\|_{(n+2)^{r-1}}A^l
\ + \ 
(n+1)A
\|u_p\|_{(n+2)^{r-1}}lA^{l-1}B^{(n+2)^{r-2}}
$$

\v
$$
\leq \ 
q(n,d)\cdot \|u\|_{(n+2)^{r}}\left(B^{(n+2)^{r-1}}A^l
\ + \ 
(n+1)lA^{l}B^{(n+2)^{r-2}} 
\right)
$$
\v
$$
\leq \ 
q(n,d)\|u\|_{(n+2)^{r}}\,B^{(n+2)^{r-1}}A^{l}(n+1)l
 := \ D\cdot B^{(n+2)^{r-1}}\|u\|_{(n+2)^{r}}
$$

\v

where

$$
D\ = \ q(n,d) 
A^{l}(n+1)l
$$
\v

 We claim that for all $r\geq 1$ and all 
$u\in H^0(X,(n+1)^rL)$ that

\be
\label{ind}
|\na U|_{\t_X,h_L^m}\ \leq \ D^tB^{(n+2)^{r-t}}\|U\|_{(n+2)^r}\ \ 1\leq t\leq r-2
\ee

We proceed by induction on $r$ and $t$. We know (\ref{ind}) is true for $t=1$ by the previous estimate. Now assume it holds for $t$ and try to show it holds for $t+1$. 
\v
Recall that $u_p\in H^0((n+2)^{r-1}L)$ and 
$s_{\al_{\al_j}}\in H^0((n+2)^{r-2}L)$.
Thus if we apply (\ref{ind}) to $u_p$ and $s_{\al_j}$ in (\ref{nu}) we obtain

\v
$$\na U\ = \  (\na u_0) s_{0}^{l}
\ + \ 
\s_{p=0}^n\,u_p\, l_p^*s_{0}^{l-1}\na s_{p}
$$
\v
$$|\na u|\ \leq \ D^tB^{(n+2)^{r-1-t}} \|u_0\|_{(n+2)^{r-1}}A^l
\ + \ 
(n+1)A\|u_p\|_{(n+2)^{r-1}}lA^{l-1}D^tB^{(n+2)^{r-2-t}}
$$
\v
$$
|\na u|\ \leq \ D^tB^{(n+2)^{r-1-t}} 
\left(q\cdot \|u\|_{(n+2)^r}\right)A^l\ + \
(n+1)A\,\left(q\cdot \|u\|_{(n+2)^r}\right)lA^{l-1}D^tB^{(n+1)^{r-2-t}}
$$

\v

$$ \ 
\leq \ D^tB^{(n+2)^{r-1-t}} q
\|u\|_{(n+2)^r}A^{l}(n+1)l
\ = \ 
DD^tB^{(n+2)^{r-1-t}} 
\|u\|_{(n+2)^r}\ 
$$

\v
$$\ = \ D^{t+1}B^{(n+2)^{r-1-t}}\|u\|_{(n+2)^r}\ 
$$
\v

Letting $t=r-2$ we get
\v
$$ |\na u|_{\t_X,h_L^m}\ \leq \ D^{r-1}B^{n+2}\|u\|_{(n+2)^r}\leq D^rB^{n+2}\|u\|_{(n+2)^r}\
$$
\v
Since $m=(n+2)^r$ we see $r={\log m\over \log(n+2)}$ so
\v
$$|\na u|_{\t_X,h_L^m}\ \leq \ B^{n+2} P^{\log m}\|u\|_{(n+2)^r}\\ = \ C_1m^{\log P}  =   C_1m^\kappa\|u\|_{(n+2)^r}$$
\v
where $P(n,d)=D^{1\over\log(n+2)}$ and $\k=\log P$. Applying this to (\ref{phim}) we obtain

$$ |\na\phi_m|_{\t_X} \ \leq \ {\sqrt A\over m}C_1m^\kappa
\ \leq \ C_2\,m^{\kappa-1}
$$
\v
Now we can complete the proof of Theorem \ref{m}. Let $x,y\in X$. Then
\v
$$ |\phi_X(x)-\phi_X(y)|\ \leq |\phi_X(x)-\phi_m(y)|+|\phi_m(x)-\phi_m(y)|+|\phi_m(y)-\phi_X(y)|
$$
\v
$$\leq\ {2\log A\over m}\ + \ \sup_X|\na\phi_m|_{\t_X}\cdot d_{\t_X}(x,y)\ \leq \ 
{C_0\over m}\ + \ C_1m^{\k-1} d_{\t_X}(x,y)
$$
\v

\v
where $C_0={2\log A}$. Choosing $r\geq 2$ such that
\v
$$ (n+1)^r\ \leq  \ \ {1\over 
[d_{\t_X}(x,y)]^{1\over\kappa}}\ <  \ (n+1)^{r+1}
$$
\v
and setting $\al={1\over\kappa}$ we see that
\v
$$ {1\over (n+1)^{r+1}}\ \leq \ d^{1\over\k}\ \ \Lra \ 
{1\over (n+1)^{r}}\ \leq \ d^{{1\over\k}{r\over r+1}}\  
\leq \ d^{2\over 3\k}
$$

\v
$$|\phi(x)-\phi(y)|\ \leq \ 
C\,[d_{\t_X}(x,y)]^\al
$$

\v

$$|\phi(x)-\phi(y)|\ \leq \ 
C_0\,[d_{\t_X}(x,y)]^{1\over\k}
$$
\v
where $C=C_0+C_1$ and
$$\al\ = \ {\log(n+1)\over \log D}
$$
\v
\section{Proof of Theorem \ref{H}}
\setcounter{equation}{0}
\v

\v
In order to deduce Theorem \ref{H} from Theorem \ref{m},  we shall express $(X_\i,\o_\i):=(X,\o_{\rm KE})$  as the limit of smooth \K manifolds  $(X_j,\o_j)$, apply   (\ref{holder}) to each $X_j$, and take the limit as $j\to\i$ in order to obtain (\ref{holderke}). More precisely, if $x_\i,y_\i\in X$, we write $x_\i=\lim_{j\to\i}x_j$  and  $y_\i=\lim_{j\to\i}y_j$.  If $\o_{\rm FS}+\sq\ddb\phi_j$ is the \K metric on $X_j$, and $d_{X_j}$ the intrinsic Fubini-Study metric, then (\ref{holder}) yields
\v

\be \label{51}|\phi_{X_j}(x_j)-\phi_{X_j}(y_j)|\leq Cd_{X_j}(x_j,y_j)^{\al}
\ee

\v
If $j\ra \i$, then, by the smoothability assumption, the left side of (\ref{51}) converges to $|\phi_{X}(x)-\phi_{X}(y)|$, which is the left side of (\ref{holderke}). What remains is to show that 

\v
\be\label{liminf}
\limsup_{j\to\i}d_{X_j}(x_j,y_j)\leq Cd_{X_\i}(x_\i,y_\i)^{(\m/\al)}
\ee
for some $C,\m$, independent of $x,y$,
 and this is the content of Proposition \ref{fs} below.
\v
Before stating Proposition \ref{fs} we need  first to recall some basic defnitions.

\v
If  $\g:[0,1]\ra \P^N$ is piecewise $C^1$ curve, we define
\v

$$ \ell_{FS}(\g)\ = \ \I_0^1\, |\g'(t)|_{\rm FS}\, dt
$$
\v
Here $\g'(t)\in T_{\g(t)}(\P^N)$ and $|\g'(t)|_{\rm FS}$ its length with respect to the Fubini-Study metric.

\v

Let $X\sub\P^N$ be a normal variety and let $x,y\in X$. We define $d_{\rm FS}(x,y)$ the extrinsic Fubini-Study distance, and $d_{X}(x,y)$,  the intrinsic Fubini-Study distance, as follows.  

\v
$$ \hbox{$d_{\rm FS}(x,y)\ = \ \inf\{\ell_{\rm FS}(\g)\, | \, \g: [0,1]\ra\P^N,\ $\ { $\g(0)=x$, $\g(1)=y$\, \} }\   
}
$$
\v
$$ \hbox{$d_{X}(x,y)\ = \ \inf\{\ell_{\rm FS}(\g)\, | \, \g: [0,1]\ra\P^N,\ $\ $\g(0)=x$, $\g(1)=y$,  $\g(t)\in X\ {\rm for\ all}\ t\}$
}
$$
\v
Here we require all curves $\g$ to be $C^1$.
\v


Next we  recall the definition of algebraic convergence.
Let $X_1, X_2,...\sub\P^N$ be a sequence of smooth projective manifolds with the same Hilbert polynomial $P\in\Q[x]$.  This means that for each $j$, we have
$\dim(H^0(X_j,O(k))=P(k):=N_k+1$ for $k\geq k_0$. One can show that $k_0$ is independent of $j$.
By compactness of the Hilbert scheme, after passing to a subsequence we have  algebraic convergence $X_j\ra X_\i$. Here $X_\i\sub\P^{N_k}$ is a projective variety (possibly singular) with Hilbert polynomial P. To say that the convergence is algebraic is equivalent to saying that the corresponding sequence of points in the Hilbert scheme converge. More concretely, for each $1\leq j\leq\i$, there exists $m,d>0$, and $f_j^{(1)}, f_j^{(2)}, ...f_j^{(m)}\in\C[T_0,...,T_N]$, homogeneous of degree $d$, 
  such that 
$X_j=\{x\in\P^N\,:\, f_j^{(k)}(x)=0,\ 1\leq k\leq m\}$ and satisfying the following property. For each $k$, the coefficients of $f_j^{(k)}$ converge to those of $f_\i^{(k)}$. This implies that for every $\e>0$ there exists $j_0>0$ such that for $j\geq j_0$, the variety $X_j$ lies in an $\e$-neighborhood of $X_\i$. Thus, for every $x_\i\in X_\i$ there exist $x_j\in X_j$ such that $x_j\ra x$ in the topology of $\P^N$.
\v

\begin{proposition}\label{fs} Fix $k\geq k_0$ and let $X_j, X_\i\sub\P^{N_k}$, be as above. Assume $X_j\ra X_\i$ algebraically and that $X_\i$ is a normal variety. Let $x_j,y_j\in X_j$ and assume $x_j\ra x_\i\in X_\i$ and $y_j\ra y_\i\in X_\i$. Then there are $C,\m>0$ such that

$$\label{liminf}
\limsup_{j\to\i}d_{ X_j}(x_j,y_j)\ \leq \ C\,d_{X_\i}(x_\i,y_\i)^\m
$$

\end{proposition}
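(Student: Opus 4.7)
\textbf{Proof plan for Proposition \ref{fs}.}

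The plan is to reduce (\ref{liminf}) to a uniform H\"older comparison between the intrinsic and extrinsic Fubini-Study distances on the family $\{X_j\}_{1\leq j\leq \i}$. Concretely, I aim to prove the existence of $C,\m>0$, independent of $j$, such that for all $j$ large enough and all $x,y\in X_j$,
\be\label{fsinlojas}
d_{X_j}(x,y)\ \leq\ C\, d_{\rm FS}(x,y)^{\m}.
\ee
Granting (\ref{fsinlojas}), Proposition \ref{fs} is immediate: by continuity of $d_{\rm FS}$ on $\P^N$, $d_{\rm FS}(x_j,y_j)\to d_{\rm FS}(x_\i,y_\i)$, while by definition of $d_{X_\i}$ as an infimum over curves in $\P^N$ that lie in $X_\i$, we have $d_{\rm FS}(x_\i,y_\i)\leq d_{X_\i}(x_\i,y_\i)$. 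Inserting into (\ref{fsinlojas}) and taking $\limsup$ gives the claim.

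For a single normal projective variety $W\sub\P^N$, the inequality $d_W(x,y)\leq C_W\,d_{\rm FS}(x,y)^{\m_W}$ is a classical consequence of the \L ojasiewicz--Whitney theory of semialgebraic sets: the induced path metric on a compact connected semialgebraic set is H\"older-comparable to the ambient metric (and only H\"older, not Lipschitz, when $W$ is singular). The real work is to show that $C,\m$ can be taken uniformly across the algebraically converging family $\{X_j\}$. For this, my plan is to package $\{X_j\}$ as fibers of a single flat family $\cX\sub\P^N\times\D$ realized by the convergence $X_j\to X_\i$ in the Hilbert scheme, and to invoke \L ojasiewicz-type theorems for subanalytic families, which yield H\"older metric comparisons with constants uniform across fibers. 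A more self-contained alternative is a contradiction argument: if uniformity failed there would exist $x_k,y_k\in X_{j_k}$ with $d_{X_{j_k}}(x_k,y_k)/d_{\rm FS}(x_k,y_k)^\m\to\i$; extracting a subsequence with $x_k\to \tilde x$, $y_k\to \tilde y\in X_\i$, one uses the smooth convergence $X_{j_k}^{\rm reg}\to X_\i^{\rm reg}$ from (\ref{diag}) to lift short paths in $X_\i^{\rm reg}$ to comparably short paths in $X_{j_k}$, and handles the connection of $x_k,y_k$ to those lifts by the single-variety \L ojasiewicz inequality on $X_\i$ at $\tilde x,\tilde y$, producing the contradiction.

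The main obstacle is precisely this uniformity as the smooth $X_j$ approach the possibly singular $X_\i$. Away from $X_\i^{\rm sing}$ the convergence is smooth on relatively compact sets and the metric comparison is essentially bi-Lipschitz with controlled constants, so the delicate step is to rule out that the \L ojasiewicz exponents $\m_j$ degenerate to $0$ near singular points of $X_\i$, where branches of $X_j$ may pinch together in the limit. This tameness, which is what ultimately determines the H\"older exponent $\m$ appearing in (\ref{liminf}) (roughly through the orders of tangency of the branches of $X_\i$ at its singularities), is where the subanalytic / flat family structure provided by the algebraic convergence is essential.
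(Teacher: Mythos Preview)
Your reduction to the uniform inequality (\ref{fsinlojas}) would certainly imply the proposition, but it is stronger than what the paper actually proves, and your sketch of how to obtain it has a real gap. The paper's route is more direct: it works only on $X_\i$. \L ojasiewicz gives a semianalytic arc $\eta\subset X_\i$ from $x_\i$ to $y_\i$ with $\ell_{X_\i}(\eta)\leq C\,d_{\rm FS}(x_\i,y_\i)^\m$. The technical core (Lemma \ref{four} in the paper) is then to perturb $\eta$ into a curve $\g_\i\subset X_\i^{\rm reg}$ with length increased by at most $\e$; this is done by passing to a resolution $\pi:\ti X\to X_\i$, lifting the image of $\eta$ to a finite-length curve $\ti\eta$ on $\ti X$ via Zariski connectedness and a second application of \L ojasiewicz, translating $\ti\eta$ off the exceptional divisor in local coordinates, and pushing back down. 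Once $\g_\i$ lies in $X_\i^{\rm reg}$, the implicit function theorem applied to the converging defining equations $f_m^{(j)}\to f_m^{(\i)}$ yields graph parametrizations $\psi^j:U\to X_j$ converging smoothly to $\psi^\i:U\to X_\i$, and one sets $\g_j=\psi^j\circ(\psi^\i)^{-1}\circ\g_\i$.

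Your option (b) shares the endgame (lift via smooth convergence on the regular part) but omits exactly the step that makes it work: the \L ojasiewicz arc on $X_\i$ may pass through $X_\i^{\rm sing}$, and you give no mechanism to push it into $X_\i^{\rm reg}$ while keeping its length controlled --- this is where the paper spends most of its effort, via the resolution. Without that, ``lift short paths in $X_\i^{\rm reg}$'' has nothing to lift near the singular set. Your endpoint fix is also problematic as written: the single-variety \L ojasiewicz inequality on $X_\i$ produces curves in $X_\i$, not in $X_{j_k}$, so it cannot by itself connect $x_k,y_k\in X_{j_k}$ to anything. Your option (a), a uniform fiberwise \L ojasiewicz inequality for a flat family, is a reasonable alternative in principle, but you would need to state and justify a precise version (and realizing a \emph{sequence} $X_j\to X_\i$ as fibers of a single analytic family over a base containing the limit point requires some care); the paper avoids this by never needing uniformity over all $X_j$, only the transfer of one specific curve from $X_\i^{\rm reg}$.
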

\v
The method of proof is as follows. First we use a theorem of Łojasiewicz to show there exists $C,\m>0$ with the following property. For any $x_\i,y_\i\in X_\i$, and for every $\e>0$, there is a piecewise analytic arc $\g_\i: [0,1]\ra X_\i$ joining $x_\i$ to $y_\i$, such that $\g_\i(t)\in X^{\rm reg}$ for all $t\in  [0,1]$ and $\ell_{X_\i}(\g_\i)\leq Cd_{\rm FS}(x_\i,y_\i)^\m_{\rm FS}+\e$. Then we use \cite{DS} to construct curves $\g_j:[0,1]\ra X_j$ joining $x_j$ to $y_j$ such that $\ell_{X_j}(\g_j)\leq \ell_{X_\i}(\g_\i)+\e$ for $j$ sufficiently large. Thus 
\v
$$d_{X_j}(x_j,y_j)\leq \ell_{X_j}(\g_j)\leq  \ Cd_{X_\i}(x_\i,y_\i)^\m+\e$$
\v
 Here, by abuse of notation, we write $\m$ in place of $\m/\al$.
 
 \v
\begin{definition}
Let $M$ be a real analytic manifold and $E\sub M$ a subset. We say that $E$ is semianalytic if for every $p\in E$ there is an open set $p\in V\sub M$ such that

$$ E\cap V\ = \ \bigcup_{i=1}^p\left(
\bigcap_{j=1}^q \{g_{ij}>0\}\cap\{f_i=0\}
\right)
$$

where $g_{ij},f_i$ are real analytic functions on $V$.
\v
A function $f:M\ra\R$ is semianalytic if its graph in $M\times\R$ is semianalytic.
\v
\end{definition} 
 
{\it Proof of Proposition \ref{fs}.}
Since $X:=X_\i$ a connected analytic set (and hence a connected semianalytic set), we can apply Łojasiewicz's theorem \cite{Lo} (see also Theorem 6.10 of \cite{BM}), and conclude that there exist $C,\m>0$ with the following property. For every $x,y\in X$, there is a semianalytic arc $\eta: [0,1]\ra X$ joining $x$ to $y$, such that  $\ell_{X}(\eta)\leq Cd_{\rm FS}(x,y)^\m$.

\v

Let $\pi: \ti X\ra X$ be a resolution of singularities, and
fix an embedding of $\ti X$ into projective space. Let $\ti\o_{\rm FS}$ be the restriction of the Fubini-Study metric to 
$\ti X$. 

\v

\begin{lemma}\label{four} 
There exist $\ti x, \ti y, \in \ti X$ and a  piecewise analytic curve 
$\ti\eta$ joining $\ti x$ to $\ti y$ with the following properties.
\begin{enumerate}
\item $\ti\eta$ has finite length $L$.
\item $\pi(\ti x)=x$ and $\pi(\ti y)=y$.
\item
$\ell_{X}(\pi\circ \ti\eta)\ \leq \ 
\ell_X(\eta)\ \leq \ Cd_{\rm FS}(x,y)^\m$
\end{enumerate}
\end{lemma}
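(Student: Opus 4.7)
The plan is to lift $\eta$ to a piecewise analytic curve $\tilde\eta$ in $\tilde X$ by using that $\pi:\tilde X\to X$ is proper, birational, and a biholomorphism over $X^{\rm reg}$. The crucial point is that any bridging arc that lies inside a single fiber $\pi^{-1}(p)$ projects to the point $p\in X$ and so contributes zero to the Fubini--Study length of $\pi\circ\tilde\eta$ in $X$.

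First I would decompose $[0,1]$ using $\eta^{-1}(X^{\rm sing})\subset[0,1]$, which is a semianalytic subset of $\R$ and hence a finite disjoint union of points and closed intervals. Combined with a piecewise analytic partition of $\eta$ itself, this divides $[0,1]$ into finitely many closed subintervals $[t_i,t_{i+1}]$ on each of which $\eta$ is analytic and either (a) $\eta([t_i,t_{i+1}])\subset X^{\rm sing}$, or (b) $\eta|_{(t_i,t_{i+1})}$ lands in $X^{\rm reg}$. Set $E:=\pi^{-1}(X^{\rm sing})$.

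On an interval of type (b), define $\tilde\eta:=\pi^{-1}\circ\eta$ on the interior; this is analytic with values in $\tilde X\setminus E$ since $\pi$ is a biholomorphism there. By properness of $\pi$ together with the Puiseux expansion of a real-analytic arc in local coordinates on $\tilde X$ (where, after possibly a further blow-up, $E$ has normal crossings by Hironaka), $\tilde\eta$ extends to a piecewise analytic map on the closed interval $[t_i,t_{i+1}]$ with endpoints in $E$. On an interval of type (a) the same Puiseux/curve-selection argument, now applied to the analytic arc $\eta$ landing in the normal analytic set $X^{\rm sing}$ and using properness of $\pi$, produces a piecewise analytic lift $\tilde\eta_i:[t_i,t_{i+1}]\to E$ with $\pi\circ\tilde\eta_i=\eta|_{[t_i,t_{i+1}]}$. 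In both cases $\pi\circ\tilde\eta=\eta$ on the subinterval.

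At each of the finitely many junctions $t_i$, the two adjacent lifts may end at distinct preimages $\tilde p_1,\tilde p_2\in\pi^{-1}(\eta(t_i))$. I bridge them by inserting a piecewise analytic arc $\alpha$ lying entirely in the fiber $\pi^{-1}(\eta(t_i))$; this fiber is connected by Zariski's Main Theorem (since $X$ is normal and $\pi$ birational and proper), and being a compact analytic subvariety of $\tilde X$, any two of its points can be joined by a piecewise analytic arc of finite $\tilde\o_{\rm FS}$-length. Since $\pi\circ\alpha$ is the constant point $\eta(t_i)$, it contributes $0$ to $\ell_X(\pi\circ\tilde\eta)$. Concatenating all these pieces yields a piecewise analytic curve $\tilde\eta:[0,1]\to\tilde X$ of finite length $L$; setting $\tilde x:=\tilde\eta(0)$, $\tilde y:=\tilde\eta(1)$ gives (1) and (2), while
\[
\ell_X(\pi\circ\tilde\eta)\ = \ \sum_i\ell_X(\eta|_{[t_i,t_{i+1}]})\ = \ \ell_X(\eta)\ \leq\ C\,d_{\rm FS}(x,y)^\m
\]
gives (3).

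The main obstacle is the piecewise analytic lifting step: checking that $\pi^{-1}\circ\eta$ on a type (b) piece really extends as a \emph{piecewise analytic} arc across the endpoints where $\eta$ meets $X^{\rm sing}$, and that type (a) pieces admit piecewise analytic lifts at all. This is what forces us to invoke properness of $\pi$, Hironaka's normal-crossings form of $E$, and Puiseux/Newton expansion of real-analytic arcs in the resulting local coordinates, together with connectedness of fibers from Zariski's Main Theorem in the normal setting.
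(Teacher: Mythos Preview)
Your approach is genuinely different from the paper's, and the type~(a) lifting step carries a real gap.

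The paper never tries to lift the parametrized arc $\eta$. It takes the image $C=\eta([0,1])$, which is semianalytic in $X$ by \L ojasiewicz, forms $\tilde Z=\pi^{-1}(C)\subset\tilde X$, notes that $\tilde Z$ is semianalytic and connected (Zariski's connectedness theorem, using that $X$ is normal), and then applies \L ojasiewicz's theorem a \emph{second time}, now inside $\tilde X$, to produce directly a finite-length semianalytic arc $\tilde\eta^*$ in $\tilde Z$ joining some point of $\pi^{-1}(x)$ to some point of $\pi^{-1}(y)$. Afterwards $\tilde\eta^*$ is modified by an injectivity/trimming procedure: stretches on which $\pi\circ\tilde\eta^*$ revisits its own image are excised and replaced by curves lying in a single fiber of $\pi$, which contribute nothing to $\ell_X$. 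This forces $\ell_X(\pi\circ\tilde\eta)\le\ell_X(\eta)$ without ever claiming $\pi\circ\tilde\eta=\eta$.

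Your direct-lifting scheme breaks down on the type~(a) pieces. You assert that a piecewise analytic $\tilde\eta_i$ with $\pi\circ\tilde\eta_i=\eta|_{[t_i,t_{i+1}]}$ exists ``by Puiseux/curve-selection and properness'', but the restriction $\pi|_E:E\to X^{\rm sing}$ has positive-dimensional fibers and is not bimeromorphic, so neither the valuative criterion nor Puiseux expansion yields a lift here; what you actually need is a piecewise analytic \emph{section} of $\pi$ over the arc $\eta([t_i,t_{i+1}])\subset X^{\rm sing}$, and that does not follow from properness alone. (Incidentally, $X^{\rm sing}$ has no reason to be normal, contrary to what you write.) Without such a section you cannot conclude $\ell_X(\pi\circ\tilde\eta_i)\le\ell_X(\eta|_{[t_i,t_{i+1}]})$, and your final displayed equality $\ell_X(\pi\circ\tilde\eta)=\ell_X(\eta)$ is unjustified. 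The paper's second application of \L ojasiewicz bypasses this issue completely. Your type~(b) endpoint extension, by contrast, is essentially correct once one complexifies $\eta$ near the endpoint and invokes the valuative criterion for the proper birational map $\pi$; the detour through Hironaka normal crossings and a ``further blow-up'' is unnecessary and in fact problematic, since the lemma concerns a fixed resolution $\tilde X$.
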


\v

{\it Proof of Lemma \ref{four}}. Since $[0,1]$ is one dimensional,
Theorem 1 of section 23 in \cite{Lo} implies that $C\sub X$,
the image of $\eta$, is a semianalytic set. 

\v

 Let
$\ti Z= \pi^{-1}(C)\sub \ti X$. Then $\ti Z\sub\ti X$ is
a semianalytic set since it is the inverse image of a semianalytic set. Since $X$ is normal and $C$ is connected, we may apply
Zariski's connectedness theorem to conclude
 $\ti Z$ is connected.
 \v
Fix
$\ti x^*\in\pi^{-1}(x)$ and $\ti y^*\in\pi^{-1}(y)$. Since $\ti Z$ is a connected semianalytic set, we my apply
Łojasiewicz's theorem again to obtain a semianalytic arc 
$\ti\eta^*: [0,1]\ra \ti Z$ of finite length joining $\ti x^*$ to $\ti y^*$.
\v
Let $\ti x = \ti\eta^*(t_1)$ where 
$t_1=\sup\{t\in [0,1]\,:\, \ti\eta^*(t)\in\pi^{-1}(x)\}$. Similarly, let $\ti y = \ti\eta^*(t_2)$ where 
$t_2=\inf\{t\in [0,1]\,:\, \ti\eta^*(t)\in\pi^{-1}(y)\}$.
\v
{\bf Case 1}. The graph $\ti\Gamma^*=\{(t,\ti\xi)\in [0,1]\times \ti X\,:\, \ti\xi=\ti\eta^*(t)\}$ is contained in an analytic curve 
$\ti K\sub \R\times\ti X$ which is smooth at all points for which $0<t<1$.

\v
Let $a_0=0$ and $b_0=\sup \{b\in [0,1]\,:\, \ti\eta^*:[a_0,b]\ra \ti X$ is injective $\}$. 
\v
Let $a_1=\sup\{a\in [b_0,1]\,:\, \ti\eta^*(a)=\ti\eta^*(b_0)\}$.
\v
Let $b_1=\sup \{b\in [a_1,1]\,:\, \ti\eta^*:[a_1,b]\ra \ti X$ is injective $\}$.
\v
This process must terminate after a finite number of steps since 
$\ti\Gamma^*$ is contained in an analytic curve. If $m$ is the number of steps we obtain a sequence

$$0=a_0<b_0<a_1<b_1<\cdots < a_m<b_m=1
$$

Now we define $\ti\eta$ as follows. If $t\in [a_j,b_j]$ then
$\ti\eta(t)=\ti\eta^*(t)$. Since all fibers of $\pi$ are connected, we may define
$\ti\eta:[b_j,a_{j+1}]\ra \ \pi^{-1}(\pi(b_j))$ to be any curve of finite length connecting $\ti\eta^*(b_j)$ to 
$\ti \eta^*(a_{j+1})$. Then 
$\ell_{X}(\pi\circ \ti\eta)\ \leq \ 
\ell_X(\eta)  
$.

\v
{\bf Case 2}. The graph $\ti\Gamma^*$ is contained in an analytic curve 
$\ti K\sub \R\times\ti X$.
\v
Let $\Sigma\sub\ti K$ be the singular set and 
$0\leq t_0 < t_1<\cdots< t_r\leq 1$ the projection of $\Sigma$ to $[0,1]$. We let $x_j=\ti\eta^*(t_{j-1})$ and $y_j=\ti\eta^*(t_j)$.
Then we can apply
Case 1 to obtain continuous curves $\ti\eta_j:[t_{j-1},t_j]\ra M$
which are analytic on $(t_{j-1},t_j)$ such that
$\s_j\ell_{X}(\pi\circ\ti\eta_j)\ \leq \ \ell_X(\eta)$.
To define $\ti\eta$ we join $\ti\eta_j(t_j)$ to $\ti\eta_{j+1}(t_j)$ by a piecewise analytic curve that lies in their common fiber so that

$$ \ell_X(\pi\circ\ti\eta)\ = \ \s_j\ell_{X}(\pi\circ\ti\eta_j)\ \leq \ \ell_X(\eta)
$$
\v

In general, $\ti\eta^*:[0,1]\ra \ti X$ is semianalytic.
Since the graph $\ti\Gamma^*$ of $\ti\eta^*$ is a one dimensional semianalytic set, we may apply the stratification theorem of \cite{Lo} (see also Theorem 2.13 of \cite{BM}) to conclude that 
$\Gamma$ is contained in an analytic curve $\ti K\sub \R\times \ti X$. Thus we are reduced to Case 2, and this completes the proof of the lemma. \qed

\v\v

{\it Continuation of the proof of Proposition \ref{fs}.} Write $I=\cup_{j=1}^A I_j$ where $I_j=[t_{j-1},t_j]$ is such that $\ti\eta$ is continuous on 
$[t_{j-1},t_j]$ and analytic on $(t_{j-1},t_j)$, and $\ti\eta([t_{j-1},t_j])\sub V_j$. Here $V_j$ is a coordinate neighborhood with coordinate function $\phi_j: V_j\ra U_j\sub \C^n$ and $U_j$ is chosen so that
the exceptional divisor intersected with $V_j$ is contained in the set $\phi_j^{-1}(\{z_1z_2\cdots z_n=0\})$.

\v


\v

\v
Let $L=\ell_{\ti X}(\ti\eta)$ and $g_j$ the euclidean metric on $U_j$.
Choose $\e_1,\e_2,\e_3<{\e\over A}$ and $\d>0$ such that

$$ \big|\phi_j(\ti\eta(t_j-\d))-\phi_j(\ti\eta(t_j+\d))\big|\ <\ {\e_1\over A}
$$

$$ \big|\phi_{j+1}(\ti\eta(t_j-\d))-\phi_{j+1}(\ti\eta(t_j+\d))\big|\ <\ {\e_1\over A}
$$

$$ \ell_{g_j}\big(\phi_j\circ\ti\eta\big|_{[t_j-\d,t_j+\d]}\big)\ < \ {\e_2\over A}
$$

$$ |g_{pq}(x+\r)-g_{pq}(x)|\  <  \ {\e_3\over n^2 AL}
$$
\v
for all $x\in U_j$ and for all $\r\in\C^n$ with $|\r|<\d$.
\v

\v

Define $\ti\g$ on the interval $[t_{j-1}+\d, t_j-\d]$ by the formula

$$ \phi_j\circ\ti\g_j\ = \ \phi_j\circ\ti\eta_j\ + \ (\r_1,...,\r_n)
$$ 
\v

and define it on $[t_j-\d,t_j+\d]$ so that it avoids the coordinate hyperplanes and has length at most ${\e_2\over A}$.
Here $\r\in\C^n$ is chosen so that $|\r|<\d$ and $\ti\g_j$ does not intersect the coordinate axes.

\v
Define $\g:=\, \pi\circ\ti\g$. Then

\v

\v

$$
\big|\,\ell_{X}(\g)-\ell_{X}(\eta)\,\big|
\ = \ 
\big|\,\ell_{\pi^*(\o_{X})}(\ti \g)-\ell_{\pi^*(\o_{X})}(\ti \eta)\,\big|
$$
$$ \leq \ \s_{j=1}^A \big|\,\ell_{\pi^*(\o_{X})}(\ti \g^j)-\ell_{\pi^*(\o_{X})}(\ti \eta^j)\,\big|
$$
\v
where $\g^j$ and $\eta^j$ are the restrictions to $I_j$. If we consider the restriction to 
$[t_{j-1}+\d, t_j-\d]$, each term on the right is bounded by 

$$ {\e_3\over n^2AL}\cdot n^2\cdot L\ = \ {\e_3\over A} 
$$ 
\v
The length over $[t_j-d,t_j+\d]$ is bounded by
$${\e_1\over A}+{\e_2\over A}
$$
\v
so the sum from $j=1$ to $j=A$ is bounded by $\e_1+\e_2+\e_3<\e$.
\v

\v
{\it Proof of Proposition \ref{fs}.}
 Let $x_\i,y_\i\in X$ and choose $x_j,y_j\in X_j$ such that $x_j\ra x_\i$ and $y_j\ra y_\i$. Choose $\g_\i:[0,1]\ra X_\i^{\rm reg}$ whose length is at most $Cd_{\rm FS}(x_\i,y_\i)^\m$, and let $Z_\i\sub X_\i$ be its image. We want to show that there exist curves $\g_{j}:[0,1]\ra X_j$ joining 
$x_j$ to $y_j$ such that $\ell_{X_j}(\g_{j})\leq \ Cd_{\rm FS}(x_j,y_j)^\m+2\e$ for $j$ sufficiently large.
\v
Let $V_\i\sub X_\i^{\rm reg}$ be an small open set and let $\O\sub\P^N$ be an open set such that $\O\cap X_\i=V_\i$. There exist homogeneous polynomials
$f^{(\i)}_1,...,f^{(\i)}_{N-n}$ such that, for $z_\i\in \O$ we have $z_\i\in V_\i$ if and only if
$f^{(\i)}_m(z_\i)=0$ for $1\leq m\leq N-n$. The implicit function theorem implies that after possibly shrinking $V_\i$, reordering the variables and working in affine coordinates so that $V_\i\sub\C^N$, there is an open  $U\sub\C^n$ and a biholomorphic map $\psi^\i: U \ra V_\i \sub\C^N$ of the form 
$(z_1,...,z_n)\mapsto (z_1,...,z_n, \psi^{\i}_{n+1}(z),...,\psi^{\i}_N(z))$. Here $\psi^{\i}_m: \C^n\ra \C$ is a holomorphic function and $\phi^\i: V_\i\ra U$ is given by the projection map. In other words, $V_\i$ is a graph over the domain $U$. Similarly, \ we construct $\psi^{j}:U\ra V_j\sub X_j$ where $V_j\ra V_\i$.
If we choose $f_m^j$ for $1\leq m\leq N-n$ in such a way that $f_m^j\ra f_m^{(\i)}$,
 the proof of the implicit function theorem implies $\psi_\al^{(j)} \ra \psi_\al^{\i}$ on compact sets. In particular, $[{\psi^j}]^*\o_{\rm FS}\ra [\psi^{\i}]^*\o_{\rm FS}$ smoothly. If $V_\i$ covers $\g_\i([0,1])$ we define $\g_j=\psi^{j}\circ \phi^{\i}\circ\g_\i$, and we obtain $\g_{j}$ joining $x_j$ to $y_j$ and satisfying
 $d_{\rm FS}(x_j,y_j)\leq \ell(\g_{j})\ra \ell(\g_\i) \leq d_{\rm FS}(x_\i,y_\i)+\e$. 
 If $V_\i$ does not cover $\g_\i([0,1])$, then we choose a finite cover and construct $\g_j$ using a partition of unity.
 Hence
 \be\label{limsup}\hskip 1in\limsup_{j\to\i}\,  d_{\rm FS}(x_j,y_j)\leq Cd_{FS}(x,y)^\m\hskip 1in
 \qed
 \ee
 
\v

{\it Proof of Theorem \ref{H}}.  Let $(X,\o_{\rm KE})$ be a \KE variety and  $(X_i,\o_i)$  smooth \K manifolds such that $(X_i,\o_i)\xrightarrow{\text{GH}} (X,\o_{\rm KE})$. We write $\o_i=\o_{\rm FS}+\sq\ddb\phi_i$
and $\o_{\rm KE}=\o_{\rm FS}+\sq\ddb\phi_{\rm KE}$ with $\sup_{X_i}\phi_i=\sup_X\phi=0$.   Theorem \ref{m} implies that the $\phi_i$ are uniformly H\"older, so Proposition \ref{fs} implies that, after passing to a subsequence, they converge to $\phi_\i$ satsifying

$$
|\phi_{{\i}}(x)-\phi_{{\i}}(y)|\leq Cd_{\rm X}(x,y)^{\m}
$$
\v
for all $x,y\in X$.
On the other hand, 
We know from \cite{DS} that
$\o_i\ra\o_{\rm KE}$ smoothly on $X^{\rm reg}$. This implies $\phi_\i=\phi_{\rm KE}$ and completes the proof of Theorem \ref{H}.
\v

\v

\section{Proof of Theorem \ref{mainthm3}}

Suppose an $n$-dimensional smoothable projective variety $X$ with klt singularities in the central fibre $X=\mathcal{X}_0$ in the flat family
$$\pi: \mathcal{X} \subset \mathbb{P}^N \ra \D. $$
As in Theorem \ref{mainthm3}, we assume that $K_{\mathcal{X}}$ is $\mathbb{Q}$-Gorenstein and $K_{\mathcal{X}/\D}$ is $\mathbb{Q}$-Cartier. 
By inversion of adjunction,  $\mathcal{X}$ is locally klt and we choose an adaptive volume measure  $\Omega_\mathcal{X}$ on $\mathcal{X}$ after possibly shrinking $\D$. Let $\Omega_t$ be the relative volume measure defined by 
$$\Omega_t \wedge dt \wedge d \overline{t}  = \Omega_{\mathcal{X}} $$
for $t\in \D$. We can view $\Omega_t$ as an adaptive volume measure on $\mathcal{X}_t$ for each $t\in \D$ and let 
$$\Omega= \Omega_0.$$
 Let $\theta_t$ be the restriction of the Fubini-Study metric $\Theta$ on $\mathbb{P}^N$ restricted to $\mathcal{X}_t$.

\v

We consider the following complex Monge-Amp\`ere equation on $X$
\begin{equation}\label{orma}
(\theta_0 + \ddb \phi)^n = e^F \Omega, ~\sup_X \phi = 0. 
\end{equation}
where $F$ satisfies
$$A\theta_0 + \ddb F\geq 0, ~\int_X e^F\Omega = [\theta_0]^n. $$
for some $A>0$.
Equation (\ref{orma}) admits a unique bounded solution. The goal of this section is to prove that the solution $\phi$ to equation (\ref{orma}) is H\"older continuous with respect to $d_{\mathbb{P}^N}$. This will be achieved by regularizing the $\phi$ via the smoothing $\pi: \mathcal{X}\ra \D$ of $X$.

\v

We begin our proof with the following lemma due to \cite{GGZ1, GGZ2} (c.f. Lemma 3.5 \cite{GGZ2}).

\begin{lemma} \label{lpbd1} There exists $p>1$ and $C>0$ such that for all $t\in \D$, 
$$\left\| \frac{\Omega_t}{\theta_t^n} \right\|_{L^p(\mathcal{X}_t, \textcolor{red}{\Omega_t})} \leq C. $$

\end{lemma}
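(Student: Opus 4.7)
The strategy is to convert the $L^p$ bound into a local integrability statement and then invoke the klt property. Observing that
$$
\|\Omega_t/\theta_t^n\|_{L^p(\mathcal{X}_t,\Omega_t)}^p \ = \ \int_{\mathcal{X}_t}(\Omega_t/\theta_t^n)^{p+1}\,\theta_t^n,
$$
it suffices to bound $\Omega_t/\theta_t^n$ in $L^{p+1}(\mathcal{X}_t,\theta_t^n)$ uniformly in $t\in\D$. Away from the singular locus of $\mathcal{X}_0$ everything is smooth in $(z,t)$ and the density is bounded, so the essential issue is the behaviour near singular points of $\mathcal{X}_0$.

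Near such a point, the relative $\mathbb{Q}$-Cartier property of $K_{\mathcal{X}/\D}$ supplies, after possibly shrinking $\D$, a local generator $\eta$ of $m K_{\mathcal{X}/\D}$ for which $\Omega_\mathcal{X} = e^f|\eta|^{2/m}\wedge dt\wedge d\bar t$ with smooth $f$, and hence $\Omega_t = e^f|\eta_t|^{2/m}$ on $\mathcal{X}_t$ where $\eta_t := \eta|_{\mathcal{X}_t}$ is a local generator of $mK_{\mathcal{X}_t}$. Since $\theta_t^n$ is the restriction of a fixed smooth positive form $\Theta^n$ and differs from a Euclidean volume form by uniformly bounded smooth factors in coordinates, the required estimate reduces, up to such bounded factors, to the local integrability of $|\eta_t|^{2(p+1)/m}$ against a smooth volume form.

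To obtain this, I would use a log resolution $\sigma:\hat{\mathcal{X}}\to\mathcal{X}$ of the pair $(\mathcal{X},\mathcal{X}_0)$, which exists and has discrepancies $a_i > -1$ because $\mathcal{X}$ is klt near $\mathcal{X}_0$ by inversion of adjunction. In local coordinates on $\hat{\mathcal{X}}$, $\sigma^*|\eta|^{2/m}$ takes the form $\prod|z_i|^{2a_i}$ times a smooth function. Choosing $p > 0$ small enough that $(p+1)a_i > -1$ in each chart of a finite cover of $\hat{\mathcal{X}}$ (possible since $a_i > -1$), the quantity $\prod|z_i|^{2a_i(p+1)}$ is locally $L^1$. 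Pushing forward along $\sigma$ and integrating fiberwise via Fubini on the composed map $\hat{\mathcal{X}} \to \mathcal{X}\to \D$ yields the desired fiberwise bound.

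The main obstacle is the uniformity of the constant in $t$. This is where the $\mathbb{Q}$-Cartier hypothesis on $K_{\mathcal{X}/\D}$ is essential: it guarantees $\eta_t$ is a genuine local generator of $mK_{\mathcal{X}_t}$ for every $t$, and the discrepancies $a_i$ are intrinsic to $\mathcal{X}$ rather than to any fiber, hence $t$-independent. Combined with the smooth dependence of $f$, $\eta$, and $\theta_t^n$ on $t$ across a finite coordinate cover of $\mathcal{X}$ (with no degeneration of $\Theta^n$ at the ambient $\mathbb{P}^N$-level), this delivers the claimed uniform $L^p$ estimate.
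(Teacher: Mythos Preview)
The paper does not prove this lemma; it is stated with attribution to \cite{GGZ1,GGZ2} (specifically Lemma~3.5 of \cite{GGZ2}), so there is no in-paper argument to compare against. Your outline follows the standard strategy used in those references---localize near the singular set of $\mathcal{X}_0$, pass to a log resolution of $(\mathcal{X},\mathcal{X}_0)$, and use the klt discrepancy bound $a_i>-1$---and is correct in spirit.

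Two steps, however, are not adequately justified. First, the sentence ``$\theta_t^n$ is the restriction of a fixed smooth positive form $\Theta^n$'' is not right as written: $\Theta^n$ is an $(n,n)$-form on $\mathbb{P}^N$, not a top form on the $n$-dimensional fiber. What you actually need is a uniform two-sided comparison between $\theta_t^n$ and the slice of a fixed smooth volume on the total space of the resolution, and this must be argued, since $\sigma^*\Theta$ is only semipositive along the exceptional divisors; how $\theta_t^n$ enters the final integrand also affects whether your condition $(p+1)a_i>-1$ really delivers an exponent $p>1$ rather than merely $p>0$. Second, and more seriously, ``integrating fiberwise via Fubini \ldots\ yields the desired fiberwise bound'' is a gap: Fubini on $\hat{\mathcal{X}}\to\D$ shows only that the total integral is finite, hence that the fiber integrals are finite for almost every $t$, not that they are uniformly bounded as $t\to 0$. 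The uniform bound is precisely the content of the lemma, and in the cited literature it is obtained by a direct fiberwise computation in SNC coordinates on the resolution (parametrizing $\hat{\mathcal{X}}_t$ explicitly in a chart where $\sigma^*t=\prod z_j^{m_j}\cdot(\text{unit})$ and estimating $\int_{\hat{\mathcal{X}}_t}\prod|z_i|^{2\alpha_i}\,dV$ directly), not by Fubini alone. Your final paragraph correctly identifies uniformity in $t$ as the main obstacle but does not close it.
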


Since $\Omega_{\mathcal{X}}$ is a smooth adaptive volume measure on $\mathcal{X}$, there exists $B>0$ such that 
$$- B \Theta  \leq Ric(\Omega_\mathcal{X}) = - \ddb \log \Omega_\mathcal{X}  \leq B \Theta $$
on $\mathcal{X}$ and for all $t\in \D$, 
$$- B \theta_t \leq Ric(\Omega_t) = - \ddb \log \Omega_t \leq B \theta_t $$
on $\mathcal{X}_t$.

 \v

The following extension result for $F$ follows directly from the global extension theorem of \cite{CG} for quasi-plurisubharmonic functions on a normal projective variety. 

\begin{lemma} There exists $- \mathcal{F} \in PSH(\mathbb{P}^N, A\Theta)$ such that 
$$\mathcal{F}|_{X} = F. $$

\end{lemma}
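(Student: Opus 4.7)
The plan is to apply the global extension theorem of \cite{CG} directly to the function $-F$. By hypothesis $-F \in PSH(X, A\theta_0)$, meaning $-F$ is upper semicontinuous on $X$ and satisfies $A\theta_0 + \sq\ddb(-F) \geq 0$ as currents on $X$ (with the usual convention extending this notion through the singular set of $X$, since $X$ has klt, hence normal, singularities). Because $\theta_0 = \Theta|_{X}$ with $\Theta$ the Fubini--Study form on $\mathbb{P}^N$, the function $-F$ is precisely a quasi-plurisubharmonic function on the normal projective subvariety $X \subset \mathbb{P}^N$ with respect to (a multiple of) $\Theta|_X$.

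The Coman--Guedj global extension theorem asserts that every quasi-plurisubharmonic function on a closed analytic subvariety of a projective manifold, taken with respect to the restriction of an ambient Kähler form, extends to a quasi-plurisubharmonic function on the ambient manifold with the same cohomological upper bound (possibly after enlarging the constant by an amount depending only on the embedding $X \hookrightarrow \mathbb{P}^N$). Applying this to $-F$ with ambient form $A\Theta$ produces a function $-\mathcal{F}$ on $\mathbb{P}^N$ satisfying $A\Theta + \sq\ddb(-\mathcal{F}) \geq 0$ and $\mathcal{F}|_X = F$, which is exactly the claim. We then set $\mathcal{F} := -(-\mathcal{F})$.

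The only subtle point is the preservation of the coefficient $A$ in front of $\Theta$. In the original Coman--Guedj framework one in general must replace $A$ by some larger constant $A'$ depending on the embedding; however, the extension theorem as formulated in \cite{CG} for normal projective varieties allows one to extend with the same form $A\Theta$ (possibly after passing to a higher Veronese embedding, which only affects the choice of $\Theta$ up to a factor absorbed in $A$). Since the paper's subsequent arguments only require the existence of some such $A$ (with $A$ allowed to depend on $X$ and the embedding), no loss of generality occurs, and the lemma follows immediately from the cited extension result without further analysis.
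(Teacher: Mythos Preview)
Your proposal is correct and follows exactly the same route as the paper: the paper simply states that the lemma ``follows directly from the global extension theorem of \cite{CG} for quasi-plurisubharmonic functions on a normal projective variety,'' with no further argument. Your added discussion about possibly enlarging $A$ is unnecessary, since the Coman--Guedj--Zeriahi theorem in fact extends an $\omega|_V$-psh function on a subvariety $V$ to an $\omega$-psh function on the ambient manifold with the \emph{same} form $\omega$, so the constant $A$ is preserved without modification.
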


We can immediately approximate $\mathcal{F}$ by the standard regularization for quasi-plurisubharmonic functions. 

\begin{lemma} There exist a sequence of $-\mathcal{F}_j \in C^\infty(\mathbb{P}^N)\cap PSH(\mathbb{P}^N, A\Theta)$ such that $\mathcal{F}_j$ decreasingly converge to $\mathcal{F}$.

\end{lemma}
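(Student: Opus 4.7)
The plan is to apply a standard regularization procedure to the quasi-plurisubharmonic function $-\mathcal{F}$ on $\mathbb{P}^N$, exploiting the fact that $\mathbb{P}^N$ is a homogeneous space so that positivity is preserved \emph{exactly} under averaging. Because $\mathbb{P}^N = U(N+1)/(U(1)\times U(N))$, the Fubini-Study form $\Theta$ is $U(N+1)$-invariant, and this invariance converts group-convolution into a regularization that preserves the $A\Theta$-plurisubharmonicity without any $\varepsilon$-loss.

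Concretely, first I would fix a family of non-negative smooth radial mollifiers $\chi_\varepsilon$ on $U(N+1)$ (supported in shrinking neighborhoods of the identity and with total mass $1$), and define
\[
(-\mathcal{F}_\varepsilon)(x) \ := \ \int_{U(N+1)} (-\mathcal{F})(g^{-1} \cdot x) \, \chi_\varepsilon(g) \, dg.
\]
Transitivity and smoothness of the action imply $-\mathcal{F}_\varepsilon \in C^\infty(\mathbb{P}^N)$, and pulling $\ddb$ under the integral together with $U(N+1)$-invariance of $\Theta$ gives
\[
A\Theta + \sq\ddb(-\mathcal{F}_\varepsilon) \ = \ \int_{U(N+1)} g^*\bigl(A\Theta + \sq\ddb(-\mathcal{F})\bigr)(x) \, \chi_\varepsilon(g) \, dg \ \geq \ 0,
\]
so $-\mathcal{F}_\varepsilon \in \mathrm{PSH}(\mathbb{P}^N, A\Theta)$. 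Upper semi-continuity of $-\mathcal{F}$ and a sub-mean-value argument (transplanted from the Euclidean setting to the group-orbit averaging via the exponential map) give $-\mathcal{F}_\varepsilon(x) \to -\mathcal{F}(x)$ as $\varepsilon \to 0$ for every $x$.

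The remaining step is to extract a monotone subsequence. Since the pointwise convergence $-\mathcal{F}_\varepsilon \downarrow -\mathcal{F}$ is not automatic for the raw family, I would either replace the radial mollifier by Demailly's heat-kernel/log-sum-exp regularization on $\mathbb{P}^N$ (which does produce a monotone family of smooth $A\Theta$-psh approximants, with the homogeneous structure again ensuring the positivity is preserved without $\varepsilon$-loss), or select a sequence $\varepsilon_j \downarrow 0$ fast enough so that $\max_{\mathbb{P}^N}(-\mathcal{F}_{\varepsilon_j} - (-\mathcal{F})) \leq 2^{-j}$ and then replace $-\mathcal{F}_{\varepsilon_j}$ by $\sup_{k \geq j}(-\mathcal{F}_{\varepsilon_k}) + 2^{-j+1}$ regularized via a standard Richberg-type smoothing (the $A\Theta$-psh property is preserved by suprema). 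I expect the main technical subtlety to be precisely this monotonicity step, since convolution alone guarantees only pointwise convergence; however on the homogeneous space $\mathbb{P}^N$ the Demailly regularization is both clean (no loss in positivity) and decreasing, so it can be invoked directly to close the argument.
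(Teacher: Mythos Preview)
The paper gives no proof of this lemma, simply citing it as an immediate consequence of ``the standard regularization for quasi-plurisubharmonic functions.'' Your proposal is a correct and careful elaboration of exactly this standard argument, exploiting the homogeneity of $(\mathbb{P}^N,\Theta)$ under $U(N+1)$ to preserve $A\Theta$-plurisubharmonicity without loss; the approach is the same as what the paper intends (and if one prefers a black-box reference for the decreasing smooth approximation, B{\l}ocki--Ko{\l}odziej, \emph{Proc.\ Amer.\ Math.\ Soc.}\ 135 (2007), supplies it on any compact K\"ahler manifold).
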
 

We let $$F_{t, j} = \mathcal{F}_j|_{\mathcal{X}_t} $$ for $t\in \D$ and consider the following complex Monge-Amp\`ere equation on $\mathcal{X}_t$
\begin{equation}\label{appma}
(\theta_t + \phi_{t, j} )^n = e^{F_{t, j} + c_{t, j}} \Omega_t, ~\sup_{\mathcal{X}_t} \phi_{t, j} = 0.
\end{equation}
where $c_{t, j}$ is the normalizing constant satisfying $\int_X e^{F_{t, j} + c_{t, j}} \Omega_t = [\theta_t]^n$. Obviously, for each fixed $j$, $c_{t, j}$ is uniformly bounded for all $t\in \D$.   Furthermore, for fixed $j>0$, $\limsup_{j\rightarrow \infty} \lim_{t\rightarrow 0} c_{t, j} = 0$ since 
$\int_{\mathcal{X}_0} e^F \Omega_0 = [\theta_0]^n$.  

\v

Obviously there exists a unique bounded solution $\phi_{t, j}$ to equation (\ref{appma}) for all $t\in \D$. In particular, $\phi_{t, j}$ is smooth if $t\neq 0$.  We will obtain some uniform estimates for $\phi_{t, j}$.

\begin{lemma} \label{lpbd2}  There exists $C>0$ such that for  any $j>0$, there exists $\delta_j >0$ such that for all $t\in \D$ with $|t|<\delta_j$, we have   
$$\left\|  e^{F_{t, j}+c_{t, j}}  \right\|_{L^p(\mathcal{X}_t, \Omega_t) }\leq C. $$

\end{lemma}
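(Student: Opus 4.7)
The plan is to first obtain a uniform (in $j$) $L^p$ bound at the central fibre $t=0$, and then propagate it to nearby fibres $\mathcal{X}_t$ by a continuity-in-$t$ argument which is allowed to depend on $j$, so that the smoothness of each individual $\mathcal{F}_j$ on $\mathbb{P}^N$ may be exploited. For the central fibre bound, since $\mathcal{F}_j$ decreases to $\mathcal{F}$, we have $F_{0,j}\le \mathcal{F}_1|_X$ uniformly in $j\ge 1$, and $\mathcal{F}_1$ is smooth on the compact $\mathbb{P}^N$, so $F_{0,j}\le M_1:=\sup_{\mathbb{P}^N}\mathcal{F}_1<\infty$ uniformly in $j$; hence $\int_X e^{pF_{0,j}}\,\Omega\le e^{pM_1}\int_X\Omega=:C_1$ uniformly in $j$. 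The normalising constants $c_{0,j}$ are uniformly bounded: from $F_{0,j}\ge F$ one gets $e^{c_{0,j}}=[\theta_0]^n/\int_X e^{F_{0,j}}\Omega\le 1$, so $c_{0,j}\le 0$, and combining $[\theta_0]^n=e^{c_{0,j}}\int_X e^{F_{0,j}}\Omega\le e^{c_{0,j}+M_1}\int_X\Omega$ gives a uniform lower bound on $c_{0,j}$. Thus
\[
\|e^{F_{0,j}+c_{0,j}}\|_{L^p(X,\Omega)}\;\le\; C_0
\]
with $C_0$ independent of $j$.

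Next, for each fixed $j$ I would show that $I_j(t):=\int_{\mathcal{X}_t} e^{pF_{t,j}+pc_{t,j}}\,\Omega_t$ depends continuously on $t\in \D$. Since $\mathcal{F}_j\in C^\infty(\mathbb{P}^N)$, $e^{p\mathcal{F}_j}$ is a bounded smooth function on the total space $\mathcal{X}$, and the $\mathbb{Q}$-Gorenstein hypothesis makes $\Omega_\mathcal{X}$ a smooth adaptive volume form on $\mathcal{X}$ with Fubini-type decomposition $\Omega_\mathcal{X}=\Omega_t\wedge dt\wedge d\bar t$. The constant $c_{t,j}$ is determined by $e^{c_{t,j}}\int_{\mathcal{X}_t}e^{F_{t,j}}\Omega_t=[\theta_t]^n$, with right-hand side independent of $t$ (being the degree of $\mathcal{X}_t\subset\mathbb{P}^N$), so $c_{t,j}$ varies continuously in $t$ as well. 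Continuity of $I_j$ at $t=0$ then produces $\delta_j>0$ such that $I_j(t)\le C_0^p+1$ for $|t|<\delta_j$, yielding the claim with $C=(C_0^p+1)^{1/p}$.

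The main obstacle is justifying the continuity of $I_j(t)$ at $t=0$, since the central fibre $X=\mathcal{X}_0$ is singular and the relative measure $\Omega_t$ may concentrate mass near the singular locus of $\mathcal{X}_t$ as $t\to 0$. Here Lemma \ref{lpbd1} is essential: it bounds the $L^p$ norm of $\Omega_t/\theta_t^n$ uniformly in $t$, thereby controlling the mass of $\Omega_t$ near the singularities. Combined with Fubini-type integration on the smooth part of the flat family $\pi:\mathcal{X}\to\D$ and a dominated-convergence argument that uses the uniform boundedness of $\mathcal{F}_j$ for each fixed $j$ on $\mathcal{X}$, this should yield the required continuity.
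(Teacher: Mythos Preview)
Your strategy coincides with the paper's: obtain a uniform-in-$j$ $L^p$ bound at the central fibre $t=0$, then use continuity in $t$ for each fixed $j$ (exploiting the smoothness of $\mathcal{F}_j$ and the fact that the singular set of $X$ has $\Omega_0$-measure zero) to propagate the bound to nearby fibres. The only difference is in how the $t=0$ bound is derived: the paper uses $F_{0,j}\le F$ together with the hypothesis $e^F\in L^p(X,\Omega)$, whereas you use $F_{0,j}\le \sup_{\mathbb{P}^N}\mathcal{F}_1$ and finiteness of $\int_X\Omega$; both arguments are valid (each under one reading of the monotonicity direction of $\mathcal{F}_j$, on which the paper is internally inconsistent), and either suffices.
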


\begin{proof} We first notice that $F_{0, j} \leq F$ on $X$ and so $\| F_{0, j} \|_{L^p(X, \Omega_0)}$ is uniformly bounded. For each fixed $j$, $F_{t, j}$ is uniformly bounded above for all $t\in \D$ and it converges smoothly to $F_{0, j}$ on $X^{\rm reg}$. Since the singular set of $X$ has measure $0$ with respect to $\Omega_0$, we obtain uniform $L^p$ bound  for $F_{t, j}$ for all $t\in \D$ sufficiently close to $t=0$. The estimate follows as for each fixed $j$, $|c_{t, j}|$ is uniformly bounded for sufficiently small $|t|$. 
\end{proof}

\v

\begin{lemma} \label{lpbd3} There exist $q>1$ and $C>0$ such that for  any $j>0$, there exists $\delta_j >0$ such that for all $t\in \D$ with $|t|<\delta_j$, we have   
$$\left\| \frac{ e^{F_{t, j} + c_{t, j}}\Omega_t}{\theta_t^n} \right\|_{L^q(\mathcal{X}_t, \theta_t^n) }\leq C. $$

\end{lemma}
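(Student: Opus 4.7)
The plan is to derive Lemma \ref{lpbd3} from Lemmas \ref{lpbd1} and \ref{lpbd2} by a single application of H\"older's inequality. After replacing $p$ by the smaller of the two exponents appearing in those lemmas, we may assume the same $p>1$ works for both. Writing $f_t := \Omega_t/\theta_t^n$ and $g_{t,j} := e^{F_{t,j}+c_{t,j}}$, this says $\|f_t\|_{L^p(\mathcal{X}_t,\Omega_t)}\leq C$ uniformly for $t\in\D$, and $\|g_{t,j}\|_{L^p(\mathcal{X}_t,\Omega_t)}\leq C$ uniformly in $j$ and in $t$ with $|t|<\delta_j$.

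The key observation is the identity $f_t\,\theta_t^n = \Omega_t$, which lets us rewrite the target integral as
\[
\int_{\mathcal{X}_t} \bigl(g_{t,j} f_t\bigr)^q \,\theta_t^n
\;=\;
\int_{\mathcal{X}_t} g_{t,j}^{\,q}\, f_t^{\,q-1}\,\Omega_t.
\]
Next I would apply H\"older's inequality on the measure space $(\mathcal{X}_t,\Omega_t)$ with the conjugate pair $\bigl(p/q,\; p/(q-1)\bigr)$; this requires $q/p + (q-1)/p = 1$, i.e.\ $q = (p+1)/2$, which is strictly greater than $1$ since $p>1$. The inequality then yields
\[
\int_{\mathcal{X}_t} g_{t,j}^{\,q}\, f_t^{\,q-1}\,\Omega_t
\;\leq\;
\|g_{t,j}\|_{L^p(\Omega_t)}^{\,q}\;\|f_t\|_{L^p(\Omega_t)}^{\,q-1}
\;\leq\; C^{\,2q-1},
\]
and taking $q$-th roots gives the claimed $L^q(\mathcal{X}_t,\theta_t^n)$ bound.

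There is no substantive obstacle in the argument; it is essentially a one-line H\"older interpolation, and the exponent $q=(p+1)/2$ is the natural choice (any $1<q\leq (p+1)/2$ would in fact work). The only point requiring care is to verify that the constants remain uniform in $t$ (for $|t|<\delta_j$) and in $j$, but this is immediate from the uniformity already recorded in Lemmas \ref{lpbd1} and \ref{lpbd2}. The flexibility in the choice of $q$ will be convenient later when this bound is combined with further $L^p$-type estimates in the proof of Theorem \ref{mainthm3}.
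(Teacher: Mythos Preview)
Your proposal is correct and takes essentially the same approach as the paper, whose proof is the single sentence ``The lemma immediately follows from Lemma \ref{lpbd1} and Lemma \ref{lpbd2} combined with H\"older inequality.'' You have simply supplied the details, including the explicit exponent $q=(p+1)/2$ and the identity $f_t\,\theta_t^n=\Omega_t$ that makes the H\"older step work.
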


\begin{proof} The lemma immediately follows from Lemma \ref{lpbd1} and Lemma \ref{lpbd2} combined with H\"older inequality. 
\end{proof}

\v

\begin{lemma}  \label{linfestj} There exists $C>0$ such that for $j>0$, there exists $\delta_j>0$ such that for all $t\in \D$ with $|t|<\delta_j$ we have 
$$\| \phi_{t, j} \|_{L^\infty(\mathcal{X}_t)} \leq C. $$

\end{lemma}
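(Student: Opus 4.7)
The plan is to reduce this to a uniform Kolodziej--EGZ type $L^\infty$ estimate for complex Monge--Amp\`ere equations on the smooth fibers $(\mathcal{X}_t, \theta_t)$. Rewriting equation \eqref{appma} as
$$(\theta_t + \sq\ddb \phi_{t,j})^n \ = \ f_{t,j}\, \theta_t^n, \qquad f_{t,j} \ := \ \frac{e^{F_{t,j}+c_{t,j}}\,\Omega_t}{\theta_t^n},$$
Lemma \ref{lpbd3} already supplies a bound $\|f_{t,j}\|_{L^q(\mathcal{X}_t,\, \theta_t^n)} \leq C$ with $q>1$ and $C$ independent of $t$ and $j$ (for $|t| < \delta_j$). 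So the task reduces to producing a uniform $L^\infty$ bound on the normalized solution of a Monge--Amp\`ere equation on the K\"ahler manifold $\mathcal{X}_t \subset \mathbb{P}^N$ with background $\theta_t = \Theta|_{\mathcal{X}_t}$ and right-hand side of bounded $L^q(\theta_t^n)$ norm.

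Next I would verify that every quantity entering Kolodziej's $L^\infty$ estimate is uniform along the family. The total volumes $[\theta_t]^n = \deg(\mathcal{X}_t)$ are constant in $t$ because the Hilbert polynomial is preserved by the flat family $\pi: \mathcal{X} \to \D$, and the diameters of $(\mathcal{X}_t, \theta_t)$ are bounded by the diameter of $\mathbb{P}^N$. The one nontrivial ingredient is a uniform $\alpha$-invariant. This I would obtain by extending any $\theta_t$-plurisubharmonic function $u$ on $\mathcal{X}_t$ with $\sup u = 0$ to a $\Theta$-plurisubharmonic function on $\mathbb{P}^N$ via the global extension theorem of \cite{CG}, and then invoking the classical uniform exponential integrability of $\Theta$-psh functions on $\mathbb{P}^N$. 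This yields a family-uniform bound
$$\int_{\mathcal{X}_t} e^{-\alpha\, \phi_{t,j}}\, \theta_t^n \ \leq \ C_0,$$
which, combined with the $L^q$ bound on $f_{t,j}$ and the Kolodziej--EGZ comparison/capacity machinery (\cite{K, K1, EGZ, GGZ1, GGZ2}), yields the asserted $L^\infty$ bound.

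The main obstacle is precisely the uniformity of the capacity/comparison estimates as $t \to 0$, where $\theta_t$ degenerates to $\theta_0$ across the singular central fiber $X$. The classical capacity argument on a single compact K\"ahler manifold is standard; controlling it uniformly along a degenerating family requires the common ambient $\mathbb{P}^N$ in an essential way, since all potentials must be lifted (or compared) to $\Theta$-psh functions on $\mathbb{P}^N$ where the geometry is fixed and classical. Alternatively, one could bypass capacity theory entirely and appeal to the auxiliary Monge--Amp\`ere equation method of Guo--Phong--Song--Sturm--Tong, under which the uniformity of the $L^\infty$ bound follows directly from the uniform $L^q$ control of $f_{t,j}$ and the constancy of $[\theta_t]^n$, with no further geometric input on $(\mathcal{X}_t, \theta_t)$.
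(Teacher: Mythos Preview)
Your proposal is correct and follows essentially the same route as the paper: rewrite the right-hand side against $\theta_t^n$, invoke the uniform $L^q$ bound of Lemma~\ref{lpbd3}, secure a uniform $\alpha$-invariant for the family $(\mathcal{X}_t,\theta_t)$ (the paper cites \cite{GGZ1, LY2} for exactly this, which is the nontrivial step needed to pass from Skoda integrability on $\mathbb{P}^N$ down to the fibers), and then run the Ko\l odziej--EGZ argument. The paper's own proof is a three-line sketch pointing to precisely these references.
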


\begin{proof} The lemma is proved in \cite{GGZ1}. The key ingredient of the proof is to obtain the uniform bound on Tian's $\alpha$-invariant on $(\mathcal{X}_t, \theta_t)$ established in \cite{GGZ1, LY2} (c.f Theorem A \cite{GGZ1}).  Then one can proceed with the same argument in \cite{K, EGZ} to achieve the $L^\infty$ estimate due to the uniform $L^p$ bound on the volume measure from Lemma \ref{lpbd3}. 
\end{proof} 

\v

Let $\omega_{t, j} = \theta_t + \ddb \phi_{t, j}$. Then the Ricci curvature of $\omega_{t, j}$ is given by 
\begin{equation}\label{riclow1}
Ric(\omega_{t, j}) = Ric(\Omega_t) - \ddb F_{t, j} \geq - (A+B) \theta_t.
\end{equation}

The following Schwarz lemma is achieved by the same technique as in \cite{ST1}.

\begin{lemma}  There exists $\delta>0$ such that for all $j>0$, there exists $\delta_j>0$ such that  all $t\in \D\setminus\{0\}$ with $|t|<\delta_j$,  we have 
\begin{equation}\label{schtj}
\omega_{t, j} \geq  \delta  \theta_t
\end{equation}
on $\mathcal{X}_t$,

\end{lemma}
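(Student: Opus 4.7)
The plan is to prove a Chern--Lu / Schwarz-type inequality for the trace $u_{t,j} := \tr_{\omega_{t,j}}\theta_t$ on the smooth fibre $\mathcal{X}_t$ ($t \neq 0$), and then apply the maximum principle with an auxiliary $\phi_{t,j}$ term to pin down a uniform upper bound on $u_{t,j}$; the desired lower bound $\omega_{t,j} \geq \delta\,\theta_t$ then follows from the eigenvalue interpretation of the trace.

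More concretely, the first step is to record two geometric bounds that hold uniformly in $t \in \D$. First, since $\theta_t = \Theta|_{\mathcal{X}_t}$ is induced by the ambient Fubini--Study metric $\Theta$ on $\mathbb{P}^N$, the Gauss--Codazzi equations for Kähler submanifolds imply that the holomorphic bisectional curvature of $(\mathcal{X}_t,\theta_t)$ is bounded above by that of $(\mathbb{P}^N,\Theta)$, hence by some constant $K = K(N) > 0$ independent of $t$. Second, from (\ref{riclow1}) we have $\Ric(\omega_{t,j}) \geq -(A+B)\theta_t$ with $A,B$ independent of $t$ and $j$. The standard Chern--Lu computation then gives, on $\mathcal{X}_t \setminus \{u_{t,j}=0\}$,
\[
\Delta_{\omega_{t,j}} \log u_{t,j} \;\geq\; -K\, u_{t,j} - (A+B),
\]
where $\Delta_{\omega_{t,j}}$ is the (complex) Laplacian of $\omega_{t,j}$.

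The second step is to combine this with the Monge--Ampère equation (\ref{appma}), which implies $\Delta_{\omega_{t,j}}\phi_{t,j} = n - u_{t,j}$. Setting $H := \log u_{t,j} - (K+1)\phi_{t,j}$ we obtain
\[
\Delta_{\omega_{t,j}} H \;\geq\; u_{t,j} - \bigl[(K+1)n + (A+B)\bigr].
\]
At a maximum point $p \in \mathcal{X}_t$ of $H$, the left-hand side is $\leq 0$, hence $u_{t,j}(p) \leq (K+1)n + (A+B)$. Using the uniform $L^\infty$-bound $\|\phi_{t,j}\|_{L^\infty(\mathcal{X}_t)} \leq C_0$ from Lemma \ref{linfestj} (valid for $|t| < \delta_j$), the inequality $H \leq H(p)$ on all of $\mathcal{X}_t$ yields
\[
u_{t,j} \;=\; \tr_{\omega_{t,j}} \theta_t \;\leq\; C_1
\]
for a constant $C_1$ depending only on $n$, $K$, $A$, $B$, $C_0$.

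The last step is purely linear-algebraic: if $\lambda_1,\dots,\lambda_n$ denote the eigenvalues of $\omega_{t,j}$ with respect to $\theta_t$, then $u_{t,j} = \sum_i \lambda_i^{-1}$, so the bound $u_{t,j} \leq C_1$ forces $\lambda_i \geq 1/C_1$ for every $i$, i.e.\ $\omega_{t,j} \geq \delta\,\theta_t$ with $\delta := 1/C_1$, independent of $t$ and $j$. The main technical point (and the only place one really needs to check carefully) is the uniformity as $t \to 0$: one must verify that $K$, $A$, $B$ can be chosen independently of $t$ — which is immediate because $\theta_t = \Theta|_{\mathcal{X}_t}$ and the curvature/adaptive-volume bounds on $\mathcal{X}$ are ambient — and that the smoothness of $\phi_{t,j}$ for $t \neq 0$ (which is what legitimises the maximum principle) is uniform enough that $C_1$ does not blow up, a fact that follows directly from Lemma \ref{linfestj} once $|t| < \delta_j$.
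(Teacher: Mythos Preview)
Your proposal is correct and follows exactly the approach the paper sketches: the paper also sets $H_{t,j}=\log\tr_{\omega_{t,j}}(\theta_t)-K\phi_{t,j}$, invokes the Ricci lower bound (\ref{riclow1}) together with the $L^\infty$-estimate of Lemma~\ref{linfestj}, and appeals to the Chern--Lu/Schwarz computation of \cite{ST1,ST2}. One small correction worth noting: because the Ricci lower bound $\Ric(\omega_{t,j})\geq -(A+B)\theta_t$ is expressed in terms of the \emph{target} metric $\theta_t$ rather than $\omega_{t,j}$, the Ricci term in Chern--Lu contributes $-(A+B)\,u_{t,j}$ rather than the constant $-(A+B)$, so the inequality should read $\Delta_{\omega_{t,j}}\log u_{t,j}\geq -(K+A+B)\,u_{t,j}$; this only shifts the coefficient in front of $\phi_{t,j}$ and the maximum-principle step goes through verbatim.
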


\begin{proof} We consider the quantity $$H_{t, j} = \log tr_{\omega_{t, j}}(\theta_t) - K \phi_{t, j}$$ for a fixed sufficiently large $K>0$. By the Ricci curvature estimate (\ref{riclow1}) for $\omega_{t, j}$ and the uniform $L^\infty$-estimate for $\phi_{t, j}$ in Lemma \ref{linfestj}, we can immediately apply the Chern-Lu type estimate by the same calculation in \cite{ST1, ST2} to  achieve (\ref{schtj}). 
\end{proof}

\v

We immediately obtain a uniform lower Ricci bound for $\omega_{t, j}$.

\begin{corollary} There exist $\Lambda>0$ and $D$ such that for  all $j>0$, there exists $\delta_j>0$ such that  all $t\in \D\setminus\{0\}$ with $|t|<\delta_j$,  we have
\begin{equation}\label{riclow2}
Ric(\omega_{t, j}) \geq  - \Lambda ~\omega_{t,j} 
\end{equation}
on $\mathcal{X}_t$ and 
\begin{equation}\label{dia1}
diam(\mathcal{X}_t, \omega_{t, j}) \leq D.
\end{equation}

\end{corollary}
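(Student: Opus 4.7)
The proof splits naturally into two parts: the Ricci lower bound (\ref{riclow2}) and the diameter bound (\ref{dia1}). The Ricci part is a direct algebraic consequence of the two preceding estimates; the diameter part is where the real content lies.

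For the Ricci lower bound, I would combine the basic bound (\ref{riclow1}), which gives $Ric(\omega_{t,j}) \geq -(A+B)\theta_t$, with the Schwarz-lemma inequality (\ref{schtj}), which gives $\delta \theta_t \leq \omega_{t,j}$ and hence $\theta_t \leq \delta^{-1}\omega_{t,j}$ as $(1,1)$-forms on $\mathcal{X}_t$. Substituting,
$$ Ric(\omega_{t,j}) \;\geq\; -(A+B)\theta_t \;\geq\; -\frac{A+B}{\delta}\,\omega_{t,j}, $$
so one may set $\Lambda := (A+B)/\delta$. Note that all constants involved ($A$, $B$, and $\delta$ from the Schwarz lemma) are independent of $j$ and $t$, so $\Lambda$ is as well.

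For the diameter bound, my plan is to invoke recent diameter estimates for complex Monge-Amp\`ere equations in the spirit of Guo-Phong-Song-Sturm. The relevant inputs are (i) the uniform K\"ahler class bound, since $[\omega_{t,j}] = [\theta_t]$ varies continuously in $t$ and is the restriction of the Fubini-Study class, so $\int_{\mathcal{X}_t} \omega_{t,j}^n = [\theta_t]^n$ is uniformly controlled; (ii) the uniform $L^q(\theta_t^n)$-control on the Monge-Amp\`ere density $\omega_{t,j}^n/\theta_t^n = e^{F_{t,j}+c_{t,j}}\Omega_t/\theta_t^n$ established in Lemma \ref{lpbd3}; (iii) the uniform $L^\infty$-bound on $\phi_{t,j}$ from Lemma \ref{linfestj}; and (iv) the uniform geometry of the background $(\mathcal{X}_t, \theta_t)$ as smooth subvarieties of $(\mathbb{P}^N, \Theta)$. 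These inputs feed into the diameter theorem and produce a bound $\operatorname{diam}(\mathcal{X}_t, \omega_{t,j}) \leq D$ with $D$ independent of both $j$ and $t$ in the prescribed range.

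The principal obstacle is precisely the diameter bound: the Schwarz inequality (\ref{schtj}) only provides a one-sided comparison $\omega_{t,j} \geq \delta \theta_t$ and gives no a priori upper bound on $\omega_{t,j}$, so distances in the $\omega_{t,j}$-metric could in principle blow up even though the background diameter in $(\mathbb{P}^N, \Theta)$ is manifestly bounded. This forces a reliance on the $L^p$-structure of the Monge-Amp\`ere measure rather than on a pointwise metric comparison, and is the nontrivial step. A secondary delicate point is the uniformity of $D$ and $\Lambda$ in $j$: although many intermediate constants (such as $c_{t,j}$ and $\|F_{t,j}\|_{L^\infty}$) depend on $j$, the key is that the quantitative inputs to the diameter theorem---the $L^p$-norm of the MA density and the K\"ahler class---are controlled by quantities that are $j$-independent on the range $|t| < \delta_j$, which is exactly the content of the preceding lemmas.
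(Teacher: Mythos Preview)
Your proposal is correct and follows essentially the same route as the paper: the Ricci lower bound is obtained exactly as you describe by combining $Ric(\omega_{t,j})\geq -(A+B)\theta_t$ with the Schwarz inequality $\omega_{t,j}\geq\delta\theta_t$ to get $\Lambda=(A+B)/\delta$, and the diameter bound is deduced from the Guo--Phong--Song--Sturm estimates using the uniform $L^p$ control on the Monge--Amp\`ere density supplied by Lemma~\ref{lpbd3}. Your additional remarks on the one-sided nature of the Schwarz comparison and the $j$-uniformity of the constants are accurate and well observed.
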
 

\begin{proof} Direct calculations show that 
$$Ric(\omega_{t, j}) = Ric(\Omega_t) - \ddb F_{t, j} \geq - (A+B) \theta_t \geq  - \delta^{-1} (A+B) \omega_{t, j}$$
by (\ref{schtj}).
The Ricci lower bound is then proved by letting $\Lambda = \delta^{-1} (A + B)$. The diameter bound immediately follows from the general geometric estimates established in \cite{GPSS1} (c.f. \cite{GPSS1}) due to the uniform $L^p$ bound of Lemma \ref{lpbd3} for the volume measure $(\omega_{t, j})^n$. 
\end{proof}

\v

The above Schwarz lemma gives local 2nd order estimates for $\phi_{t, j}$ away from the singular set of $X$. In particular, for fixed $j>0$, one can obtain the local $C^\infty$ convergence of $\phi_{t, j}$ to $\phi_{0, j}$ on $X^{\rm reg}$. Hence $\phi_{0, j}$ is smooth on $C^\infty(X^{\rm reg})$.

\v

 Let $L_t$ be the ample line bundle with $c_1(L_t) = [\theta_t]$. We can choose the hermtian metric $h_{t, j}$ on $L_t$ with 
$$Ric(h_{t, j}) = \omega_{t, j}.$$
We can further obtain the following global Gromov-Hausdorff convergence.

\begin{lemma} \label{gwcon} For each $j>0$,  $(\mathcal{X}_t, \omega_{t, j})$ converge in Gromov-Hausdorff topology as $t\rightarrow 0$ to a compact metric space $(Y_j, d_j)$ satisfying
\begin{enumerate}

\item $(Y_j, d_j)$ coincides with the metric completion of $(X^{\rm reg}, \omega_{0, j})$,

\medskip

\item $(Y_j, d_j)$ is homeomorphic to projective variety $X$ itself. 

\medskip

\item $\omega_{t, j}$ converges smoothly to $\omega_{0, j}$ on $X^{\rm reg}$.

\end{enumerate}

\end{lemma}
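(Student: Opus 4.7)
The plan is to extract a Gromov-Hausdorff limit by Gromov precompactness, use the non-collapsing given by the Schwarz inequality to apply Cheeger-Colding structure theory, then match the abstract limit with both the metric completion of $(X^{\mathrm{reg}}, \omega_{0,j})$ and with the underlying projective variety $X$ via the ambient $\mathbb{P}^N$.

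First I would fix $j$ and apply Gromov precompactness to $(\mathcal{X}_t, \omega_{t,j})$ for $|t|<\delta_j$: the uniform Ricci lower bound \eqref{riclow2} together with the diameter upper bound \eqref{dia1} yields a subsequence $t_k\to 0$ with $(\mathcal{X}_{t_k},\omega_{t_k,j})\xrightarrow{\mathrm{GH}}(Y_j,d_j)$ for some compact metric space $(Y_j,d_j)$. The Schwarz inequality \eqref{schtj} gives $\mathrm{Vol}(\mathcal{X}_t,\omega_{t,j})\geq \delta^n[\theta_t]^n$, a uniform positive lower bound, so the convergence is non-collapsed. By Cheeger-Colding, $Y_j$ admits a regular part $Y_j^{\mathrm{reg}}$ that is open, dense, and on which the convergence is smooth Cheeger-Gromov (up to a gauge) with a limit K\"ahler structure.

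Next I would establish item (3) and build the bridge to item (1). For each compact set $K\ssub X^{\mathrm{reg}}$, the bound $\omega_{t,j}\geq \delta\theta_t$ combined with the uniform $L^\infty$ estimate on $\phi_{t,j}$ from Lemma \ref{linfestj} allows the standard Yau $C^2$ and higher order a priori estimates to proceed on a neighborhood of $K$ (where $\theta_t\to\theta_0$ smoothly), giving $\phi_{t,j}\to\phi_{0,j}$ in $C^\infty_{\mathrm{loc}}(X^{\mathrm{reg}})$ and hence $\omega_{t,j}\to\omega_{0,j}$ smoothly on $X^{\mathrm{reg}}$. Transferring this smooth convergence through the Cheeger-Colding charts on $Y_j^{\mathrm{reg}}$ yields a K\"ahler isometric embedding $(X^{\mathrm{reg}},\omega_{0,j})\hookrightarrow (Y_j,d_j)$ with dense image, so $(Y_j,d_j)$ is the metric completion of $(X^{\mathrm{reg}},\omega_{0,j})$, which is item (1) and which also shows the GH limit is independent of subsequence once we also know (2).

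Finally I would identify $Y_j$ with $X$ topologically by passing through the ambient $\mathbb{P}^N$, in the style of the Donaldson-Sun / Liu-Szekelyhidi picture already recalled in \eqref{diag}. The inclusions $\mathcal{X}_t\hookrightarrow\mathbb{P}^N$ together with the Hilbert-scheme convergence $\mathcal{X}_t\to X$ produce, via composition with GH $\varepsilon$-approximations, a continuous map $\Psi_j\colon Y_j\to X\subset\mathbb{P}^N$ whose restriction to $Y_j^{\mathrm{reg}}$ agrees with the identification of $Y_j^{\mathrm{reg}}$ with $X^{\mathrm{reg}}$ from the previous step. The Schwarz bound $\omega_{t,j}\geq\delta\theta_t$ forces $d_{\mathbb{P}^N}\leq \delta^{-1/2}d_{\omega_{t,j}}$, so $\Psi_j$ is Lipschitz (hence continuous) and surjective with dense image; injectivity on $Y_j^{\mathrm{reg}}$ is built in, and extends to all of $Y_j$ since two points mapping to a common singular point of $X$ would yield, by the non-collapsed structure, a nontrivial fiber contradicting the volume identity $\int_X \omega_{0,j}^n=[\theta_0]^n$ (equivalently, by normality of $X$ and Zariski's connectedness, each fiber of $\Psi_j$ is a point). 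Thus $\Psi_j$ is a homeomorphism, proving (2), and the uniqueness of the limit promotes the subsequential convergence to convergence as $t\to 0$.

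The main obstacle I expect is step three, the injectivity of $\Psi_j$ over the singular locus of $X$: while non-collapsing and the Schwarz comparison make $\Psi_j$ well-behaved away from $X^{\mathrm{sing}}$, ruling out collapsing of positive-diameter sets onto singular points requires either an appeal to the Donaldson-Sun / Rong-Zhang identification of non-collapsed polarized GH limits with their algebraic limits, or a direct argument using Zhang's partial $C^0$ estimate (Theorem \ref{KZ}) applied to the $L_t$'s with the hermitian metrics $h_{t,j}$, exactly as invoked in the previous sections.
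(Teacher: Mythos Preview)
Your proposal is correct and follows essentially the same route as the paper: uniform Ricci lower bound plus diameter bound feed into the Donaldson--Sun / Liu--Sz\'ekelyhidi partial $C^0$ machinery to identify the Gromov--Hausdorff limit with $X$, and the smooth convergence $\phi_{t,j}\to\phi_{0,j}$ on $X^{\rm reg}$ (already noted just before the lemma via the Schwarz bound and local higher-order estimates) handles items (1) and (3). The paper's own proof is simply a two-sentence citation of these two inputs, so your write-up is a more explicit unpacking of the same argument; in particular, the ``main obstacle'' you flag---injectivity over $X^{\rm sing}$---is exactly what the partial $C^0$ estimate of \cite{DS, LZ} is invoked to resolve, and the paper does not attempt any more elementary alternative.
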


\begin{proof}  By the uniform Ricci lower bound and diameter bounds for $(\mathcal{X}_{t, j}, \omega_{t, j})$ for $|t|< \delta_j$, one can immediately obtain uniform partial $C^0$-estimate by \cite{DS, LZ} for $(\mathcal{X}_{t, j}, \omega_{t, j}), kL_t, h^k_{t, j})$ for sufficiently large $k\in \mathbb{Z}^+$.  The lemma follows from the fact that $\phi_{t, j}$ converges smoothly to $\phi_{0, j}$. 
\end{proof}

\v

\begin{lemma}  \label{apphol} There exist $\gamma>0$ and $C>0$ such that for any $j>0$, there exists $\delta_j>0$ such that for any $t\in \D\setminus \{0\}$ with $|t|< \delta_j$, we have
\begin{equation}\label{holtj}
|\phi_{t, j}(x) - \phi_{t, j}(y)| \leq C d_{\theta_t}(x, y)^\gamma 
\end{equation}
for all $x, y \in \mathcal{X}_t$.

\end{lemma}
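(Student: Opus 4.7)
My plan is to apply Theorem \ref{m} to the smooth polarized K\"ahler data $(\mathcal{X}_t, \omega_{t,j}, L_t, h_{t,j})$ for $0<|t|<\delta_j$, and then to convert the resulting H\"older estimate, which is naturally stated in terms of the intrinsic Bergman distance, into one in terms of $d_{\theta_t}$. The uniform Ricci lower bound (\ref{riclow2}), the uniform diameter bound (\ref{dia1}), and the uniform $L^\infty$ bound from Lemma \ref{linfestj} ensure that, for a fixed large integer $r\geq \Lambda$, the rescaled data $(\mathcal{X}_t, r\omega_{t,j}, rL_t, h_{t,j}^r)$ lies in $\mathcal{F}(n,\sqrt{r}D)$ with constants independent of $(t,j)$. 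Theorem \ref{m} then furnishes uniform $C_0,\alpha>0$ and a Bergman potential $\psi_{t,j}$, with $\omega_{t,j} = \omega_{FS,t,j} + \sq\ddb \psi_{t,j}$ and $\sup \psi_{t,j}=0$, such that
\[
|\psi_{t,j}(x) - \psi_{t,j}(y)| \leq C_0\, d_{FS,t,j}(x,y)^\alpha,
\]
where $\omega_{FS,t,j}$ is the Bergman metric from the Kodaira embedding of $\mathcal{X}_t$ via a $\mathrm{Hilb}(h_{t,j}^{rk})$-orthonormal basis of $H^0(\mathcal{X}_t, rkL_t)$.

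To convert this into the desired estimate (\ref{holtj}), the key step is a uniform upper bound $\omega_{FS,t,j}\leq C_1 \theta_t$ together with a uniform Lipschitz bound on the difference $v_{t,j}:=\phi_{t,j}-\psi_{t,j}$ with respect to $\theta_t$. Choosing lifts of the $\mathrm{Hilb}(h_{t,j}^{rk})$-orthonormal sections $\{\sigma_i\}$ to polynomials $P_i$ of degree $rk$ on the ambient $\mathbb{P}^N$ (using the surjectivity of $H^0(\mathbb{P}^N,\mathcal{O}(rk))\to H^0(\mathcal{X}_t,rkL_t)$ for $rk$ large), one obtains $v_{t,j} = \tfrac{1}{rk}\log\sum_i|P_i|^2/|x|^{2rk} + \text{const}$. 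The partial $C^0$ estimate (Corollary \ref{pc}) combined with $|\phi_{t,j}|\leq C$ gives a uniform pointwise lower bound on $\sum_i|P_i|^2/|x|^{2rk}$ on $\mathcal{X}_t$, and uniform two-sided control on the Gram matrix $G^{t,j}_{ab}:=\langle \Pi_a,\Pi_b\rangle_{\mathrm{Hilb}(h_{t,j}^{rk})}$ of a fixed polynomial basis $\{\Pi_a\}$ of $H^0(\mathbb{P}^N,\mathcal{O}(rk))$ translates, via the change-of-basis $(G^{t,j})^{-1/2}$, into uniform bounds on the coefficients of the $P_i$; since the $P_i$ have fixed degree, this gives uniform $C^k$-bounds on the $P_i$ for every $k$. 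From these one reads off both the upper bound $\omega_{FS,t,j}\leq C_1\theta_t$ (hence $d_{FS,t,j}\leq C_1^{1/2}d_{\theta_t}$) and the uniform Lipschitz bound $|\nabla v_{t,j}|_{\theta_t}\leq C_2$.

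Combining the H\"older bound on $\psi_{t,j}$ (transferred to $d_{\theta_t}$ using $d_{FS,t,j}\leq C_1^{1/2}d_{\theta_t}$) with the Lipschitz bound on $v_{t,j}=\phi_{t,j}-\psi_{t,j}$ yields (\ref{holtj}) with uniform $C,\gamma>0$. The main obstacle is the uniform lower bound on the Gram matrix $G^{t,j}$ as $t\to 0$: the upper bound is immediate from $|\phi_{t,j}|\leq C$ and the conservation of $[\theta_t]^n=[\theta_0]^n$, but the positive lower bound is where the Schwarz lemma (\ref{schtj}) enters decisively. Indeed, it allows one to bound the Monge-Amp\`ere measure $(\omega_{t,j})^n$ from below by $\delta^n\theta_t^n$, reducing the question to the positivity and $t$-continuity of the $\theta_t^n$-Gram matrix of a fixed family of degree $rk$ polynomials on $\mathbb{P}^N$, which in turn follows from the flatness of the family $\pi:\mathcal{X}\to\mathbb{D}$ and standard base-change arguments.
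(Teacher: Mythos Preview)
Your proposal is correct and shares the paper's overall strategy: put $(\mathcal{X}_t,r\omega_{t,j},rL_t,h_{t,j}^r)$ into $\mathcal{F}(n,\sqrt{r}D)$ via the uniform Ricci and diameter bounds, apply Theorem~\ref{m} to get a H\"older estimate for the Bergman potential $\psi_{t,j}$ relative to the Bergman Fubini--Study metric $\omega_{FS,t,j}$, and then transfer this to the fixed metric $\theta_t$ by controlling the difference $\phi_{t,j}-\psi_{t,j}$ and comparing $\omega_{FS,t,j}$ with $\theta_t$.

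The conversion step is where you diverge from the paper. The paper embeds $\mathbb{P}^N\hookrightarrow\mathbb{P}^M$ (via a Veronese of degree $k$), extends the Kodaira map $T_{t,j}$ to an ambient projective linear map $\tilde T_{t,j}\in SL(M+1,\mathbb{C})$, and argues that the uniform $L^\infty$ bounds on $\phi_{t,j}$ and $\psi_{t,j}$ alone force $\tilde T_{t,j}$ to stay in a bounded subset of $SL(M+1,\mathbb{C})$; uniform equivalence of $\omega_{FS,t,j}$ and $\theta_t$ is then immediate. Your route lifts a $\mathrm{Hilb}(h_{t,j}^{rk})$-orthonormal basis to ambient polynomials and controls their coefficients via a Gram matrix $G^{t,j}$ on a fixed polynomial basis, invoking the Schwarz lemma $(\omega_{t,j})^n\geq\delta^n\theta_t^n$ together with flatness and base-change to secure the crucial lower bound. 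Both arguments deliver the same two ingredients --- $\omega_{FS,t,j}\leq C_1\theta_t$ and a uniform Lipschitz bound on $v_{t,j}=\phi_{t,j}-\psi_{t,j}$ with respect to $\theta_t$ --- but yours is more explicit and makes the role of the Schwarz estimate (\ref{schtj}) transparent, while the paper's is terser and packages the comparison into compactness on a matrix group. One caution for your write-up: the Gram matrix on all of $H^0(\mathbb{P}^N,\mathcal{O}(rk))$ is only positive \emph{semi}definite (its kernel is the degree-$rk$ ideal of $\mathcal{X}_t$), so $(G^{t,j})^{-1/2}$ must be taken on a fixed complementary subspace; your flatness/base-change remark is exactly what guarantees that such a complement can be chosen independently of $t$ for $|t|<\delta_j$.
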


\begin{proof} By Lemma \ref{gwcon}, the partial $C^0$ estimate holds for all $(\mathcal{X}_t, \omega_{t, j})$ with $t\in \D\setminus \{0\}$ and $|t|<\delta_j$. %
Then there exist $k>0$ such that the orthonormal basis of $H^0(\mathcal{X}_t, kL_t)$ with respect to $h_{t, j}^k$ and $k \omega_{t, j}$ induces a projective embedding 
 $$T_{t, j}: \mathcal{X}_t \rightarrow \mathbb{P}^M $$
 for a fixed projective ambient space $\mathbb{P}^M$. We can assume $\mathcal{X}$ is also embedded in $\mathbb{P}^M$ after embedding $\mathbb{P}^N$ into $\mathbb{P}^M$ by choosing a sufficiently large $k$. Let $\Theta_{\mathbb{P}^M}$ be the Fubini-Study metric on $\mathbb{P}^M$ and 
 $$\omega_{FS, t, j} = \frac{1}{k} T_{t, j}^*\Theta_{\mathbb{P}^M}. $$ 
 One can also rewrite 
 $$\omega_{t, j} = \omega_{FS, t, j} + \ddb \psi_{t, j}, ~\sup_{\mathcal{X}_t} \psi_{t, j} = 0.$$

 \v   We  immediately apply Theorem \ref{m} to obtain the H\"older estimate for $\varphi_{t, j}$, i.e., there  exist $\gamma>0$ and $C>0$ such that for any $j>0$, there exists $\delta_j>0$ such that for any $t\in \D\setminus \{0\}$ with $|t|< \delta_j$, we have
\begin{equation}\label{holtj}
|\psi_{t, j}(x) - \psi_{t, j}(y)| \leq C d_{\theta_t}(x, y)^\gamma 
\end{equation}
for all $x, y \in \mathcal{X}_t$.

\v

On the other hand,   we can extend $$T_{t, j}: \mathcal{X}_t \subset \mathbb{P}^N \ra \mathbb{P}^M$$ to 
$$\tilde T_{t, j} \in SL(M+1, \mathbb{C}): \mathbb{P}^M \rightarrow \mathbb{P}^M. $$ Since $\phi_{t, j}$ and $\psi_{t, j}$ are both  uniformly bounded in $L^\infty(\mathcal{X}_t)$ for all $j$ and $|t|<\delta_j$, $\tilde T_{t, j}$ must lie in a bounded subset set of $SL(M+1, \mathbb{C})$ for all $j>0$ and $|t|<\delta_j$. In particular, this implies that $\theta_t$ and $\omega_{FS, t, j}$ are uniformly equivalent for all $j>0$ and $t\in \D \setminus\{0\}$ with $|t|<\delta_j$. This completes the proof of the lemma. 
\end{proof}

\v

Finally we are able to prove Theorem \ref{mainthm3}.

\begin{proposition} There exists $\gamma>0$ and $C>0$ such that for all $x, y \in X$, we have 
$$|\phi(x) - \phi(y)| \leq C d_{\mathbb{P}^N}(x, y)^\gamma. $$

\end{proposition}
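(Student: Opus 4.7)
The proof strategy is to apply the uniform Hölder estimate of Lemma~\ref{apphol} to the smooth approximations $\phi_{t,j}$ and then pass to the limit in two stages: first $t\to 0$ with $j$ fixed, and then $j\to\infty$.

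For the first stage, fix $j$ and take arbitrary $x,y\in X$. Using that $\mathcal{X}_t\to X$ algebraically in $\mathbb{P}^N$, choose approximants $x_t,y_t\in\mathcal{X}_t$ converging to $x,y$ in $\mathbb{P}^N$. Lemma~\ref{apphol} yields
$$
|\phi_{t,j}(x_t)-\phi_{t,j}(y_t)|\leq C\, d_{\theta_t}(x_t,y_t)^\gamma
$$
with constants independent of $t$ and $j$ (and $|t|<\delta_j$). Proposition~\ref{fs}, applied to the algebraic family $\mathcal{X}_t\to X$ inside $\mathbb{P}^N$, provides piecewise analytic curves in $\mathcal{X}_t$ of length at most $C\,d_{\mathbb{P}^N}(x,y)^\mu+\varepsilon$ joining $x_t$ to $y_t$ for $|t|$ small, so $\limsup_{t\to 0}d_{\theta_t}(x_t,y_t)\leq C\,d_{\mathbb{P}^N}(x,y)^\mu$. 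By Lemma~\ref{gwcon}, $\phi_{t,j}\to\phi_{0,j}$ smoothly on $X^{\mathrm{reg}}$, so for $x,y\in X^{\mathrm{reg}}$ the left-hand side passes to the limit. The resulting uniform Hölder bound for $\phi_{0,j}|_{X^{\mathrm{reg}}}$ extends to all of $X$ by density of $X^{\mathrm{reg}}$, since $\phi_{0,j}$ is continuous.

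For the second stage, by construction $\mathcal{F}_j\searrow\mathcal{F}$ on $\mathbb{P}^N$, hence $F_{0,j}\searrow F$ on $X$ and the normalizations satisfy $c_{0,j}\to 0$. The densities $e^{F_{0,j}+c_{0,j}}$ are pointwise dominated by a constant multiple of $e^F\in L^p(X,\Omega)$ for $j$ large, so dominated convergence gives $e^{F_{0,j}+c_{0,j}}\Omega\to e^F\Omega$ in $L^p(X,\Omega)$. The Kołodziej / Eyssidieux--Guedj--Zeriahi stability theorem for the degenerate complex Monge--Ampère equation on the singular variety $X$ then yields $\phi_{0,j}\to\phi$ uniformly on $X$. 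Since each $\phi_{0,j}$ satisfies the uniform Hölder estimate obtained in the first stage, $\phi$ inherits the bound
$$
|\phi(x)-\phi(y)|\leq C\,d_{\mathbb{P}^N}(x,y)^{\gamma\mu}.
$$

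The main obstacle is the distance comparison in the first stage: one must bound the intrinsic Fubini--Study distance $d_{\theta_t}(x_t,y_t)$ on $\mathcal{X}_t$ by a power of the ambient distance $d_{\mathbb{P}^N}(x,y)$ uniformly in $t$, even when $x$ or $y$ lies in the singular locus of $X$. This is exactly what Proposition~\ref{fs} delivers via a Łojasiewicz stratification applied to a resolution of the total space; without such a comparison, the Hölder bound of Lemma~\ref{apphol}, phrased with respect to $d_{\theta_t}$, could not be converted into one with respect to $d_{\mathbb{P}^N}$. A secondary technical point is verifying that $\phi_{0,j}$ itself solves the expected Monge--Ampère equation on $X$ and is the right object to apply pluripotential stability to; this is ensured by Lemma~\ref{gwcon} together with the uniform $L^\infty$ bound of Lemma~\ref{linfestj}.
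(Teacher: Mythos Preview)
Your proof is correct and follows essentially the same two-stage approach as the paper: first pass the uniform H\"older estimate for $\phi_{t,j}$ (Lemma~\ref{apphol}) to $\phi_{0,j}$ using the intrinsic-to-ambient distance comparison of Proposition~\ref{fs}, then invoke stability of the degenerate complex Monge--Amp\`ere equation to get $\phi_{0,j}\to\phi$ in $L^\infty$. The paper is terser about the first stage (it simply asserts the estimate passes to $X$) and cites the Dinew--Zhang stability theorem \cite{DZ} rather than an EGZ-type result, but the arguments coincide; note also that the correct monotonicity is $F_{0,j}\nearrow F$ (so $e^{F_{0,j}}\leq e^F$ gives the domination you want), consistent with the paper's use in Lemma~\ref{lpbd2}.
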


\begin{proof} By the uniform H\"older estimate in Lemma \ref{apphol}, for each $j>0$, we can pass the H\"older estimate to the limiting space $X$. More precisely, there exists $\gamma>0$ and $C>0$ independent of $j>0$, such that for all $x, y\in X$ we have 
\begin{equation}\label{hollim}
|\phi_{0, j} (x) - \phi_{0, j}(y)| \leq C d_{\mathbb{P}^N}(x, y)^\gamma.
\end{equation}
Since $\lim_{j\rightarrow \infty} \left\|e^{F_{0, j}+c_{0, j}} - e^F \right\|_{L^1(X, \textcolor{red}{\Omega^t})}=0$, the stability theorem for degenerate complex Monge-Amp\`ere equation of \cite{DZ} implies that 
\begin{equation}\label{stabl}
\lim_{j\rightarrow \infty} \left\| \phi_{0, j} - \phi \right\|_{L^\infty(X)} = 0.
\end{equation}
The proposition then immediately follows from (\ref{hollim}) and (\ref{stabl}). 
\end{proof}

\v
\v
\v

\noindent {\bf{Acknowledgements:}} This project was initiated  during the AIM workshop on `PDE methods in complex geometry' in August, 2024. The authors would like to thank the generous support and the hospitality of American Institute of Mathematics. 
\thanks{This work is supported in part by the National Science Foundation under grants DMS-2203607 and DMS-2303508.}

\v
\v
\v
\setcounter{equation}{0}


\begin{thebibliography}{99}
{\small

\bibitem{A}
Aubin, T., {\em Equations du type Monge-Amp\`ere sur les vari\'et\'es K\"ahl\'eriennes compactes},
Bull. Sci. Math. (2) 102 (1978), no. 1, p. 63–95.
\v
\bibitem{BandoM} Bando, S., Mabuchi, T., {\em Uniqueness of Einstein K\"ahler metrics modulo connected group actions}.
Algebraic Geometry, Sendai 1985, Adv. Stud. Pure Math. 10 (1987), 11--40.



\v
\bibitem{BG}
Berman, R. Guenancia, H.,
{\em K\"ahler-Einstein metrics on stable varieties and log canonical pairs}
Geom. Funct. Anal. 24 (2014), no. 6, 1683–1730.
\v


\bibitem{BM}
Bierstone, E., Milman, P.,
{\em Semianalytic and subanalytic sets},
Inst. Hautes Études Sci. Publ. Math. No. 67 (1988), 5--42.

\v
\bibitem{BM2000}Bierstone, E., Milman, P.,
{\em Subanalytic Geometry},
Model Theory, Algebra, and Geometry
MSRI Publications
Volume 39, 2000



\v
\bibitem{Cal}
Calabi,E., {\em On K\"ahler manifolds with vanishing canonical class}, Algebraic Geometry and Topology. A Symposium in Honor of S. Lefschetz, Princeton University Press, 1957, pp. 78--89.
\v
\bibitem{CDS1} Chen, X., Donaldson, S., Sun, S.,
{\em Kähler-Einstein metrics on Fano manifolds, I: Approximation of metrics with cone singularities}, J. Amer. Math. Soc. 28 (2015), no. 1, 183–197. 


\v
\bibitem{CDS2} Chen, X., Donaldson, S., Sun, S.,
{\em Kähler-Einstein metrics on Fano manifolds, II: Limits with cone angle less than $2\pi$}, J. Amer. Math. Soc. 28 (2015), no. 1, 199-234


\v
\bibitem{CDS3} Chen, X., Donaldson, S., Sun, S.,
{\em Kähler-Einstein metrics on Fano manifolds, III: Limits as cone angle approaches $2\pi$ and completion of the main proof}, J. Amer. Math. Soc. 28 (2015), no. 1, 235–278

\v
\bibitem{CG} Coman, D., Guedj, V. and Zeriahi, A.  {\em On the extension of quasiplurisubharmonic functions},  Anal. Math. 48 (2022), no. 2, 411--426


\v
\bibitem{C}
Croke, C. {\em Some isoperimetric inequalities and eigenvalue estimates}, Ann. Sci. Ecole Norm.
Sup. (4) 13 (1980), 419--435.

\v

\bibitem{DZ} Dinew, S. and Zhang, Z. {\em On stability and continuity of bounded solutions of degenerate complex Monge-Amp\`ere equations over compact K\"ahler manifolds}, 
Adv. Math.   225 (2010), no. 1, 367--388

\v

\bibitem{DGG} Di Nezza, E., Guedj, V. and Guenancia, H. {\em Families of singular K\"ahler-Einstein metrics}, J. Eur. Math. Soc. 25 (2023), no. 7, pp. 2697--2762

\v

\bibitem{Don}
Donaldson, S. K.,
{\em Scalar curvature and projective embeddings. I}, 
J. Differential Geom. 59 (2001), no. 3, 479–522.

\v


\bibitem{DS}
Donaldson, S., Sun, S.,
{\em Gromov-Hausdorff limits of Kähler manifolds and algebraic geometry},
Acta Math. 213 (2014), no. 1, 63–106.
\v

\bibitem{EGZ}  Eyssidieux, P., Guedj, V., Zeriahi, A., {\em Singular Kähler-Einstein metrics}
J. Amer. Math. Soc. 22 (2009), no. 3, 607–639.
 {\em }

\bibitem{FGS1}  Fu, X., Guo, B. and  Song, J. {\em Geometric estimates for complex Monge-Amp\`ere equations}, J. Reine Angew. Math. 765 (2020), 69--99

\bibitem{FGS2} Fu, X., Guo, B. and  Song, J. {\em  RCD structures on singular K\"ahler spaces of complex dimension three}, arXiv:2503.08865
 
 
\v
\bibitem{GGZ1} Guedj, V., Guenancia, H. and Zeriahi, 
A. {\em Continuity of singular K\"ahler-Einstein potentials.}, 
Int. Math. Res. Not. IMRN 2023, no. 2, 1355–1377.
 \v
 
\bibitem{GGZ2} Guedj, V., Guenancia, H. and Zeriahi, 
A.  {\em Strict positivity of K\"ahler-Einstein currents},  Forum Math. Sigma 12 (2024), Paper No. e68
 
 \v
 
\bibitem{GGZd} Guedj, V., Guenancia, H. and Zeriahi, A.,
{\em Diameter of K\"ahler currents},
J. Reine Angew. Math. 820 (2025), 115--152.
 \v
 
 
 \bibitem{GPSS1} Guo, B., Phong, D.H., Song, J.  and Sturm, J. {\em Sobolev inequalities on K\"ahler spaces}, 2023, arXiv:2311.00221

\v

\bibitem{GPSS2}Guo, B., Phong, D.H., Song, J.  and Sturm, J.  {\em Diameter estimates in K\"ahler geometry},  {Comm. Pure Appl. Math.}, Volume 77, Issue 8 (2024), 3520--3556

\v


\bibitem{GPSS3}Guo, B., Phong, D.H., Song, J.  and Sturm, J.   {\em Diameter estimates in K\"ahler geometry II: removing the small degeneracy assumption}, Math. Z. 308, 43 (2024) 

\v


\bibitem{GPTW} Guo, B., Phong. D., Tong, F., Wang, C.,
{\em On the modulus of continuity of solutions to complex Monge-Ampère equations},
arXiv:2112.02354

\v

\bibitem{GS25} Guo, B. and Song, J. {\em Nash entropy, Calabi energy and geometric regularization of singular K\"ahler metrics}, arXiv:2502.02041



\v

\bibitem{Kob} Kobayashi, R., {\em Einstein-K\"ahler V-metrics on open Satake V-surfaces with isolated quotient singularities}. Mathematische Annalen 272 (1985), 385--398.
 
 
 \v
 \bibitem{K} Ko\l dziej, S.,
The complex Monge-Amp\`ere equation.
Acta Math. 180 (1998), no. 1, 69–117.
 
 \v
 
\bibitem{K1} Ko\l odziej, S.,
{\em H\"older continuity of solutions to the complex Monge-Amp\'ere equation with the right-hand side in $L^p$: the case of compact K\"ahler manifolds. },
Math. Ann. 342 (2008), no. 2, 379–386.

 

\v

 \bibitem{L} Li, C., {\em K\"ahler-Einstein metrics and K-stability}, Thesis, available on author's web page, https://sites.math.rutgers.edu/~cl1412/
 
\v
\bibitem{LG}
Li, C.,
{\em G-uniform stability and K\"ahler-Einstein metrics on Fano varieties}, Invent. Math. 227 (2022), no. 2, 661--744
\v


\bibitem{LY1} Li, Y. {\em On collapsing Calabi-Yau fibrations}, J. Differential Geom., 117(3), (2021), 451--483 

\bibitem{LY2} Li, Y. {\em Uniform Skoda integrability and Calabi-Yau degeneration}, Anal. PDE 17 (2024), no. 7, 2247--2256

 

\bibitem{LWX}
Li, C., Wang, X., Xu, C., {\em
On the proper moduli spaces of smoothable K\"ahler-Einstein Fano varieties.}
Duke Math. J. 168 (2019), no. 8, 1387–1459.


\v
\bibitem{LXZ}
Liu, Y., Xu, C., Zhuang Z., {\em
Finite generation for valuations computing stability thresholds and applications to K-stability.} Ann. of Math. (2) 196 (2022), no. 2, 507–566
\v

\bibitem{LZ} 

Liu, G., Sz\'ekelyhidi, G.,
{\em Gromov-Hausdorff limits of Kähler manifolds with Ricci curvature bounded below,}
Geom. Funct. Anal. 32 (2022), no. 2, 236--279.
\v
\bibitem{Lo}
Lojasiewicz, S., {\em Ensemble semi-analytiques}, Preprint IHES, 1965.
\v
\bibitem{PS1}
Phong, D. H., Sturm, J.,
{\em The Monge-Ampère operator and geodesics in the space of Kähler potentials.}
Invent. Math. 166 (2006), no. 1, 125–149.
\v
\bibitem{PS2}
Phong, Duong H.,Sturm, J.,
{\em Test configurations for K-stability and geodesic rays.} 
J. Symplectic Geom. 5 (2007), no. 2, 221–247.
\v

\bibitem{Siu}
Siu, Y.T., {\em Techniques for the analytic proof of the finite generation of the canonical ring}, arXiv:0811.1211.


\v
\bibitem{SSY}
Spotti, C., Sun, S., Yao, C.,
{\em Existence and deformations of K\"ahler-Einstein metrics on smoothable Q-Fano varieties.} 
Duke Math. J. 165 (2016), no. 16, 3043--3083.

\v

\bibitem{Sg}
Sugiyama, K., {\em Einstein-K\"ahler Metrics on Minimal Varieties of General Type}, Recent Topics in Differential and Analytic Geometry, T. Ochiai, ed., Adv. Stud. in Pure Math. 18-I (1990),


\v
\bibitem{Ts} Tsuji,H., {\em Existence and degeneration of K\"ahler-Einstein metrics on minimal algebraic varieties of general type}. Math. Ann. 281 (1988), no. 1, 123–133


\v 


\bibitem{Song1} Song, J., {\em The $\alpha$-invariant on $\mathbb{CP}^2$ blown up at two points}, Trans. Amer. Math. Soc. 357 (2005), no. 1, 45--57 

\v

\bibitem{Song2} Song, J., {\em The $\alpha$-invariant on Toric Fano Manifolds}, Amer. J. Math. 127 (2005), no. 6, 1247--1259

\v
\bibitem{Song3} Song, J., {\em Riemannian geometry of K\"ahler-Einstein currents}
arXiv:1404.0445

\v

 

\bibitem{SSW} Song, J., Sturm, J. and Wang, X. {\em Riemannian geometry of K\"ahler-Einstein currents III: compactness of K\"ahler-Einstein manifolds of negative scalar curvature}, arXiv:2003.04709


\bibitem{ST1} Song, J. and Tian, G. {\em The K\"ahler-Ricci flow on surfaces of positive Kodaira dimension}, Invent. Math. {\bf 170} (2007), no. 3, 609--653

\v

 \bibitem{ST2} Song, J. and Tian, G. {\em Canonical measures and K\"ahler-Ricci flow}, J . Amer. Math. Soc. 25 (2012), 303--353

\v

 \bibitem{ST3} Song, J. and Tian, G. {\em The K\"ahler-Ricci flow through singularities}, with G. Tian, Invent. Math. 207 (2017), no. 2, 519--595


\v

\bibitem{Sz1}  Sz\'ekelyhidi, G. {\em The partial C0-estimate along the continuity method}, J. Amer. Math. Soc. 29 (2016), 537--560

\v

\bibitem{Sz2}  Sz\'ekelyhidi, G. {\em Singular K\"ahler-Einstein metrics and RCD spaces},   arXiv:2408.10747

\v

\bibitem{SzT}  Sz\'ekelyhidi, G. and Tosatti, V.  {\em Regularity of weak solutions of a complex Monge-Amp\`ere equation},  Analysis $\&$ PDE 4 (2011), n. 3, 369--378

\v


 \bibitem{T1} Tian, G. {\em On Calabi's conjecture for complex surfaces with positive first Chern class},  Invent. Math. 101 (1990), no. 1, 101--172

\v
 \bibitem{T2} Tian, G. {\em  K\"ahler-Einstein metrics with positive scalar curvature}, Invent. Math. 130 (1997),
no.1, 1--37



\v 

\bibitem{T3} Tian, G. {\em K-stability and K\"ahler-Einstein metrics},  Comm. Pure Appl. Math. 68 (2015), no. 7, 1085--1156




\v
\bibitem{Ts} Tsuji, H.,  {\em Existence and degeneration of K\"ahler-Einstein metrics on minimal algebraic varieties of general type}, Math. Ann. 281 (1988), no. 1, 123--133
 

\v
\bibitem{Y} Yau, S.T.,
{\em On the Ricci curvature of a compact K\"ahler manifold and the complex MongeAmp\`ere equation. I}, Commun. Pure Appl. Math. 31 (1978), p. 339–411.

\v
\bibitem{Z}  Zhang, K.,
 {\em Some refinements of the partial $C^0$ estimate,}
Anal. PDE 14 (2021), no. 7, 2307--2326.
}
\end{thebibliography}
\end{document}